\theoremstyle{definition}
\newtheorem{dfn}{Definition}[section]
\newtheorem{prop}[dfn]{Proposition}
\newtheorem{lem}[dfn]{Lemma}
\newtheorem{thm}[dfn]{Theorem}
\newtheorem{cor}[dfn]{Corollary}
\newtheorem{rem}[dfn]{Remark}
\newcommand{\sgn}{\mathop{\mathrm{sgn}}\nolimits}
\DeclareMathOperator*{\supp}{supp}
\title{Malliavin Calculus on the Clifford Algebra}
\author{Takayoshi Watanabe\thanks{Mathematical Institute, Graduate School of Science, Tohoku University, Sendai, Japan\\
Email address: takayoshi.watanabe.q5@dc.tohoku.ac.jp}}
\date{}
\begin{document}
\maketitle

\begin{abstract}
We deal with Malliavin calculus on the $L^2$ space of the $W^*$-algebra generated by fermion fields (the Clifford algebra). First, we verify the product formula for multiple integrals in It\^o-Clifford calculus, which is It\^o calculus on the Clifford algebra. Using this product formula, we can define the derivation operator and the divergence operator. Anti-symmetric Malliavin calculus thus constructed has properties similar to those of usual Malliavin calculus. The derivation operator and the divergence operator satisfy the canonical anti-commutation relations, and the divergence operator serves as an extension of the It\^o-Clifford stochastic integral, satisfying the Clark-Ocone formula. Subsequently, using this calculus, we consider the concentration inequality, the logarithmic Sobolev inequality, and the fourth-moment theorem. As for the logarithmic Sobolev inequality, only a weaker result is obtained. On the other hand, as for the concentration inequality, we obtain results that are almost similar to those of the usual case; moreover, the results concerning the fourth-moment theorem imply that, unlike in the case of the usual Brownian motion, convergence in distribution cannot be deduced from the convergence of the fourth moment.
\end{abstract}

\section{Introduction}
The present paper explores whether Malliavin calculus can function in the framework of non-commutative probability theory beyond bosons. This exploration opens up the possibility of applying techniques established in stochastic calculus to non-commutative probability theory.

Various results have already been established for bosons. Here, by a boson, we mean a triple consisting of the symmetric Fock space, its creation and annihilation operators $a^*$ and $a$. It is known that there exists an isomorphism from the $L^2$ space of functionals on the paths of Brownian motion to the symmetric Fock space over $L^2(\mathbb{R}_+)$. The fact that $B_t = a^*(\mathbbm{1}_{[0,t]}) + a(\mathbbm{1}_{[0,t]})$, where $\mathbbm{1}$ is the indicator function, reproduces the moments of Brownian motion is also well known. These facts lead to a variety of results. One of the existing results in this framework is Hudson-Parthasarathy theory \cite{Par92,Mey95}, which serves as It\^o calculus in bosons. Just as It\^o calculus yields the solutions to the diffusion equations, Hudson-Parthasarathy theory makes the solutions to the Lindblad equations. Subsequently, Malliavin calculus was extended in the form including Hudson-Parthasarathy theory, leading to non-causal stochastic calculus \cite{Lin93,Mey95}, and reaching a more generalized framework presented in \cite{FLS01}.

In the context of non-commutative calculus beyond bosons, there exists It\^o-Clifford calculus \cite{BSW1}, which serves as It\^o calculus for fermions. This calculus considers the creation and annihilation operators $b^*$ and $b$ on the anti-symmetric Fock space, where $\Psi_t = b^*(\mathbbm{1}_{[0,t]}) + b(\mathbbm{1}_{[0,t]})$ plays the role of Brownian motion. This calculus also develops martingale theory for this process $\Psi_t$.

Therefore, we construct Malliavin calculus for fermions to investigate whether Malliavin calculus can also be functional beyond bosons.

In section~\ref{section_B_preliminary} of this paper, we provide a brief introduction to the relationship between Brownian motion and bosons. In subsection~\ref{subsection_Malliavin}, we list the essential results from usual Malliavin calculus that will be needed. In subsection~\ref{subsection_BB_relation}, we explore how Brownian motion is realized within the framework of non-commutative probability theory using bosons.

In section~\ref{section_CAR algebra}, following the reference \cite{BSW1}, we introduce the results of It\^o-Clifford calculus that are necessary for this paper. Especially, we show that most of the results presented in section~\ref{section_B_preliminary} have their correspondence in the Clifford algebra. Among these, it is crucial that the chaos expansion holds. The results we obtain in this study rely on this fact.

In subsection~\ref{subsection_Mcf}, we take the results introduced in section~\ref{section_CAR algebra} into account, define the derivation operator and the divergence operator on the Clifford algebra, and show that these operators have properties similar to those of usual Malliavin calculus. The derivation operator and the divergence operator satisfy the canonical anti-commutation relations. The divergence operator serves as an extension of the It\^o-Clifford stochastic integral and satisfies the Clark-Ocone formula. It should be noted that we recently found that the derivation operator had already been constructed in \cite{AMN21}, where the existence of the divergence operator was also suggested.

In subsection~\ref{subsection_application}, we apply the contents discussed in subsection~\ref{subsection_Mcf} to analyse the Clifford algebra. We consider the concentration inequality, the logarithmic Sobolev inequality, and the fourth-moment theorem. As for the logarithmic Sobolev inequality, only a weaker result is obtained. On the other hand, as for the concentration inequality, we obtain results that are almost similar to those of the usual case; moreover, the results concerning the fourth-moment theorem imply that, unlike in the case of the usual Brownian motion, convergence in distribution cannot be deduced from the convergence of the fourth moment.

% 2
\section{Preliminaries About Brownian Motion}\label{section_B_preliminary}
% 2.1
\subsection{Outline of Malliavin Calculus}\label{subsection_Malliavin}
First, in this subsection, we provide a brief review of Malliavin calculus. The contents and notation are based on \cite{Nua06, FLS01}.

Let $\mathcal{H}$ be a real separable Hilbert space. If there exist a complete probability space $(\Omega,\mathcal{F},\mathbb{P})$ and a linear map $W : \mathcal{H} \to L^2(\Omega)$ such that $W(h)$ are centered Gaussian random variables with covariances given by $\mathbb{E}(W(h)W(k)) = \langle h,k \rangle_{\mathcal{H}}$, $W$ is called a Gaussian process on $\mathcal{H}$.

Now, let $\mathcal{F}$ be the $\sigma$-algebra generated by the random variables $\{ W(h) \mid h \in \mathcal{H} \}$.

We denote by $C^{\infty}_b$ the set of all infinitely continuously differentiable functions $ f : \mathbb{R}^n \to \mathbb{R} $ such that $f$ and all of its partial derivatives are bounded. The algebra $\mathcal{S}$ of smooth random variables is defined by
$$
\mathcal{S} = \{ F = f( W(h_1),\dots,W(h_n) ) \mid n \in \mathbb{N}, f \in C^{\infty}_b(\mathbb{R}^n), h_1, \dots, h_n \in \mathcal{H} \},
$$
and the derivation operator $\mathcal{D} : \mathcal{S} \to L^2(\Omega) \otimes \mathcal{H} \cong L^2 (\Omega;\mathcal{H})$ is defined by
$$
\mathcal{D}(F) = \sum^n_{i=1} \frac{\partial f}{\partial x_i} (W(h_1), \dots, W(h_n)) \otimes h_i
$$
for $F = f( W(h_1), \dots, W(h_n) ) \in \mathcal{S}$.
$\mathcal{D}$ has various properties, among which we list the following:
\begin{enumerate}
\item (The Leibniz rule) $\mathcal{D} (FG) = \mathcal{D}(F)G + F \mathcal{D}(G)$\ for $ F,G \in \mathcal{S} $;
\item (Integration by parts) $\mathbb{E} (FGW(h)) = \mathbb{E}( \langle h,\mathcal{D}(F)\rangle_{\mathcal{H}} G + F \langle h, \mathcal{D}(G) \rangle_{\mathcal{H}} )$\ for $ F, G \in \mathcal{S},\ h \in \mathcal{H} $;
\item $\mathcal{D}$ is closable from $ L^p(\Omega) $ to $ L^p(\Omega ; \mathcal{H}) $ for $ 1 \le p \le \infty $. The closure of $\mathcal{D}$ is also denoted by $\mathcal{D}$;
\item the domain of $\mathcal{D}$ in $L^p(\Omega)$, denoted by $\mathbb{D}^{1, p}$, is determined by the completion of $\mathcal{S}$ with respect to the norm $\| F \|_{1,p} = [ \mathbb{E}(| F |^p) + \mathbb{E}(\| \mathcal{D}(F) \|^p_{\mathcal{H}} ]^{1/p}$.
\end{enumerate}
The adjoint of $\mathcal{D} : L^2(\Omega) \to L^2(\Omega;\mathcal{H})$ can be defined. This is called the divergence operator, denoted by $\delta : L^2(\Omega;\mathcal{H}) \to L^2(\Omega)$. If we denote the smooth elementary elements by
$$
\mathcal{S}_{\mathcal{H}} \coloneqq \left\{ u = \sum^n_{j=1} F_j\otimes h_j \mid n \in \mathbb{N}, F_1, \dots, F_n \in \mathcal{S}, h_1, \dots, h_n \in \mathcal{H} \right\},
$$
then the following holds:
$$
\delta(u) = \sum^n_{j=1} F_j W(h_j) - \sum^n_{j=1} \langle h_j, \mathcal{D}(F_j) \rangle_{\mathcal{H}}
$$
for $u = \sum^n_{j=1} F_j \otimes h_j \in \mathcal{S}_{\mathcal{H}}$.
Regarding the properties of $\delta$, we list the following:
\begin{enumerate}
\item $\mathcal{D}_{h}(\delta(u)) = \langle h, u \rangle_{\mathcal{H}} + \delta( \mathcal{D}_h (u) ) $, where $ \mathcal{D}_h (F) = \langle h, \mathcal{D}(F) \rangle_{\mathcal{H}} $;
\item $\mathbb{E} ( \delta(u) \delta(v) ) = \mathbb{E}({\langle u,v \rangle_{\mathcal{H}}}) + \mathbb{E}(\mathrm{Tr}(\mathcal{D}(u) \circ \mathcal{D}(v))) $, where $\langle e_i \rangle_{i \in \mathbb{N}}$ is a complete orthonormal system of $\mathcal{H}$, and $\mathrm{Tr}((\mathcal{D}(u) \circ \mathcal{D}(v))) = \sum_{i, j} \mathcal{D}_{e_i}( \langle u, e_j \rangle_{\mathcal{H}}) \mathcal{D}_{e_j} (\langle  v, e_i \rangle_{\mathcal{H}}) $;
\item $ \delta(Fu) = F\delta(u) - \langle u, \mathcal{D}(F) \rangle_{\mathcal{H}} $ for $h \in \mathcal{H}, u, v \in \mathcal{S}_{\mathcal{H}}, F \in \mathcal{S}$.
\end{enumerate}
From here on, we assume $ \mathcal{H} = L^2(\mathbb{R}_+) $. This is the $L^2$ space over the set of non-negative real numbers $\mathbb{R}_+$, the Borel $\sigma$-algebra $\mathcal{B}$ on $\mathbb{R}_+$, and the Lebesgue measure $\mu(=ds)$. We denote by $L^2_{s} (\mathbb{R}^m_{+})$ the space of symmetric functions contained in $L^2(\mathbb{R}^m_+)$.
For $f \in L^2_s(\mathbb{R}^m_+)$, we denote the multiple stochastic integrals with respect to $W$ by $I_m(f)$~\cite{Nua06}. To explain briefly, for $m\in \mathbb{N}$, let $\mathcal{B}_0 = \{ A \in \mathcal{B} \mid \mu(A) < \infty \} $, and let $A_1, \dots, A_m$ be pairwise disjoint sets belonging to $\mathcal{B}_0$. We define
$$
I_m( \tilde{\mathbbm{1}}_{A_1 \times \dots \times A_m}) = W(\mathbbm{1}_{A_1})\cdots W(\mathbbm{1}_{A_m}),
$$
and extend it using linearity and $L^2$ isometry to get $I_m(f)$. Here, $\tilde{f}$ is the symmetrization of $f$, defined by
$$
\tilde{f}(t_1, \dots, t_m) = \frac{1}{m!} \sum_{\sigma \in S_m} f(t_{\sigma(1)}, \dots, t_{\sigma(m)}),
$$
where $S_m$ is the $m$-th symmetric group. This satisfies
$$
\mathbb{E}( I_m(f) I_q(g) ) = \left\{
\begin{array}{ll}
0 & (m \neq q)\\
m!\langle f, g \rangle_{L^2(\mathbb{R}^m_+)} & ( m = q ).
\end{array}
\right.
$$
We also write $\int f(t_1, \dots, t_m)\ W(dt_1)\cdots W(dt_m)$ for $I_m(f).$
We now list some properties of $ I_m(f) $.
\begin{prop}[{\cite[Prop.~1.1.3]{Nua06}}]\label{multiple formula}
    For every $p, q \in \mathbb{Z}_{\ge 0}$, let $f \in L^2_s(\mathbb{R}^p_+),\ g \in L^2_s(\mathbb{R}^q_+)$. Then
    $$
    I_p(f)I_q(g) = \sum^{p\wedge q}_{r=0} r! \binom{p}{r} \binom{q}{r} I_{p+q-2r}(f \tilde{\otimes}_r g),
    $$
    where  $f \tilde{\otimes}_r g$ is the symmetrization of $$f {\otimes}_r g \coloneqq \int f(t_1, \dots, t_{p-r}, s_1, \dots, s_r ) g( t_{p-r+1}, \dots, t_{p+q-2r}, s_1, \dots, s_{r} )\ d\bf{s} .$$
\end{prop}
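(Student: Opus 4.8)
The plan is to prove the identity first for a dense class of simple symmetric functions and then extend to all of $L^2_s(\mathbb{R}^p_+) \times L^2_s(\mathbb{R}^q_+)$ by continuity. Fix a finite family $A_1, \dots, A_N \in \mathcal{B}_0$ of pairwise disjoint sets and work inside the linear span of the off-diagonal generators $\tilde{\mathbbm{1}}_{A_{i_1} \times \cdots \times A_{i_m}}$ with $i_1, \dots, i_m$ distinct. Since the diagonals carry zero Lebesgue measure, such functions are dense in $L^2_s(\mathbb{R}^m_+)$, and because $I_m$ is (up to the factor $\sqrt{m!}$) an isometry, both sides of the asserted formula are jointly continuous in $(f,g)$ for the $L^2$-norms. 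Hence it suffices to verify the identity for single generators $f = \tilde{\mathbbm{1}}_{A_{i_1} \times \cdots \times A_{i_p}}$ and $g = \tilde{\mathbbm{1}}_{A_{j_1} \times \cdots \times A_{j_q}}$, where the $i$'s are mutually distinct and the $j$'s are mutually distinct.

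On these generators I would exploit that $X_i \coloneqq W(\mathbbm{1}_{A_i})$ are independent centered Gaussians with variances $\mu(A_i)$, and that the defining relation gives $I_m(\tilde{\mathbbm{1}}_{A_{i_1} \times \cdots \times A_{i_m}}) = X_{i_1}\cdots X_{i_m}$ whenever the indices are distinct. Thus $I_p(f)\,I_q(g)$ is literally a product of the Gaussians $X_{i_1}\cdots X_{i_p} X_{j_1}\cdots X_{j_q}$, in which every block $A_i$ occurs at most twice -- once from $f$ and once from $g$. The only nontrivial relation needed is therefore the degree-two identity $X_i^2 = I_2(\mathbbm{1}_{A_i}^{\otimes 2}) + \mu(A_i)$, which is itself a consequence of the same off-diagonal approximation: refining $A_i$ into a partition and writing $X_i^2 = \bigl(\sum_k X_k\bigr)^2$, the off-diagonal part converges to $I_2(\mathbbm{1}_{A_i}^{\otimes 2})$ while the diagonal part converges to $\mu(A_i) = \langle \mathbbm{1}_{A_i}, \mathbbm{1}_{A_i}\rangle_{\mathcal{H}}$.

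Now expand each shared-block square $X_i^2$ via this relation. Every choice of a subset $R$ of the shared blocks, with the factor $\mu(A_i)$ selected for $i \in R$ (a contraction) and the factor $I_2(\mathbbm{1}_{A_i}^{\otimes 2})$ retained for the others, produces $\prod_{i \in R}\mu(A_i)$ times an off-diagonal multiple integral of order $p+q-2|R|$; collecting these is exactly the $|R|$-fold contraction of $f$ against $g$. Summing over all $R$ and then restoring the symmetrizations of $f$ and $g$, the coefficient $r!\binom{p}{r}\binom{q}{r}$ emerges as the number of ordered matchings of $r$ of the $p$ arguments of $f$ with $r$ of the $q$ arguments of $g$, namely $\binom{p}{r}$ choices on the $f$-side, $\binom{q}{r}$ on the $g$-side, and $r!$ pairings, each contributing one copy of $f \otimes_r g$ which symmetrizes to $f \tilde{\otimes}_r g$. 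The main obstacle is precisely this combinatorial reassembly: checking that the symmetrizations on the two sides are compatible, that every pairing contributes with the correct multiplicity, and that no diagonal or lower-order term is lost or double-counted. As a final safeguard I would cross-check the coefficients by computing the second moment $\mathbb{E}\bigl(I_p(f)I_q(g)\bigr)$ against the right-hand side through the isometry, which pins down the normalization independently of the bookkeeping.
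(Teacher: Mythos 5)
Your proposal takes a genuinely different route from the paper's. The paper quotes this classical statement from Nualart without proof, but its own proof of the fermionic analogue (Proposition~\ref{anti-multiple formula}) mirrors Nualart's argument: first the case $q=1$ on disjoint-block elementary functions, then induction on $q$ by splitting $g=g_1\wedge g_2$. You instead expand the whole product at once, exploiting independence of the blocks and a Wick-type reassembly (essentially the diagram-formula proof, as in Janson's book). That route is viable, and several of your steps are sound: the reduction to off-diagonal generators over a common disjoint family, the continuity of both sides (Cauchy--Schwarz for the contractions, $L^1$-continuity of products of two chaos elements), and the degree-two identity $X_i^2=I_2(\mathbbm{1}_{A_i}^{\otimes 2})+\mu(A_i)$ with its refinement proof.

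There is, however, a genuine gap in the reassembly step, and your wording hides it. Write $S$ for the set of shared blocks. After expanding the shared squares, the term attached to a contraction set $R\subseteq S$ is $\prod_{i\in R}\mu(A_i)$ times $\bigl(\prod_{k\ \mathrm{unshared}}X_k\bigr)\prod_{i\in S\setminus R}I_2(\mathbbm{1}_{A_i}^{\otimes 2})$, which you call ``an off-diagonal multiple integral of order $p+q-2|R|$.'' It is not: its intended kernel $\mathrm{sym}\bigl(\bigotimes_k\mathbbm{1}_{A_k}\otimes\bigotimes_{i\in S\setminus R}\mathbbm{1}_{A_i}^{\otimes 2}\bigr)$ has repeated blocks, so it is not off-diagonal, and the identification of this product of operators with $I_{p+q-2|R|}$ of that kernel is covered neither by the defining relation for $I_m$ (which applies only to distinct disjoint blocks) nor by the degree-two identity. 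Hence your claim that the degree-two identity is ``the only nontrivial relation needed'' is false as stated; you also need the factorization of $I$ over disjoint blocks with multiplicities, which is precisely the kind of statement the paper's $q=1$-plus-induction structure is designed to avoid proving head-on. The gap is repairable with the tools you already use: refine each doubled block, replace $I_2(\mathbbm{1}_{A_i}^{\otimes 2})$ by its off-diagonal approximant so that all blocks become distinct and the product is a multiple integral by definition, then pass to the limit, using that the factors live on disjoint blocks and are therefore independent, so that $\|XY\|_{L^2}=\|X\|_{L^2}\|Y\|_{L^2}$ lets $L^2$-convergence pass through products. Beyond that, the multiplicity count $r!\binom{p}{r}\binom{q}{r}$ --- which you yourself defer as ``the main obstacle'' --- is correct but is asserted rather than proved; it requires tracking the $1/p!$ and $1/q!$ normalizations of the symmetrized generators against the number of pairings with matching blocks. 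Finally, the proposed safeguard of checking $\mathbb{E}\bigl(I_p(f)I_q(g)\bigr)$ only pins down the constant ($r=p=q$) term, since all higher-order chaoses have mean zero, so it cannot confirm the remaining coefficients.
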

\begin{prop}[{\cite[Prop.~1.1.4]{Nua06}}]\label{hermite}
    Let $H_m(x)$ be the $m$-th Hermite polynomial, and let $h \in L^2(\mathbb{R}_+)$ be an element of norm one. Then
    $$
    H_m(W(h)) = \int h(t_1) \cdots h(t_m) \ W(dt_1) \cdots W(dt_m).
    $$
\end{prop}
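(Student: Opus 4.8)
The plan is to argue by induction on $m$, combining the product formula of Proposition~\ref{multiple formula} with the three-term recurrence for the (probabilists') Hermite polynomials, namely $H_0(x) = 1$, $H_1(x) = x$, and $H_{m+1}(x) = x H_m(x) - m H_{m-1}(x)$; note that this normalization of $H_m$ is exactly the one under which the claimed identity can hold. Writing $J_m \coloneqq I_m(h^{\otimes m})$ for the right-hand side, it suffices to show that the family $(J_m)_{m \ge 0}$ satisfies the same initial data and the same recurrence as $(H_m(W(h)))_{m \ge 0}$. The base cases are immediate: $J_0 = I_0(1) = 1 = H_0(W(h))$ and $J_1 = I_1(h) = W(h) = H_1(W(h))$.

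For the inductive step I would compute the product $W(h)\, J_m = I_1(h)\, I_m(h^{\otimes m})$ via Proposition~\ref{multiple formula} with $p = 1$, $f = h$, $q = m$, $g = h^{\otimes m}$. Since $p \wedge q = 1$, the sum collapses to only the terms $r = 0$ and $r = 1$. The $r = 0$ term is $I_{m+1}(h \otimes h^{\otimes m}) = I_{m+1}(h^{\otimes(m+1)}) = J_{m+1}$, because $h^{\otimes(m+1)}$ is already symmetric. The $r = 1$ term carries the coefficient $1!\binom{1}{1}\binom{m}{1} = m$, and the key computation is the single surviving contraction
$$
(h \otimes_1 h^{\otimes m})(t_1, \dots, t_{m-1}) = \int h(s)\, h^{\otimes m}(t_1, \dots, t_{m-1}, s)\, ds = \|h\|_{L^2}^2\, h^{\otimes(m-1)}(t_1, \dots, t_{m-1}).
$$
Here the normalization $\|h\|_{L^2} = 1$ is precisely what reduces this to $h^{\otimes(m-1)}$, which is again symmetric, so $h \tilde{\otimes}_1 h^{\otimes m} = h^{\otimes(m-1)}$. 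Assembling the two contributions gives $W(h)\, J_m = J_{m+1} + m\, J_{m-1}$, i.e. $J_{m+1} = W(h)\, J_m - m\, J_{m-1}$, which is exactly the Hermite recurrence, closing the induction.

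The only genuinely delicate point is the bookkeeping in the product formula: one must verify that the lone contraction produces exactly the scalar $\|h\|_{L^2}^2$ (so that the norm-one hypothesis enters in an essential way) and that the combinatorial coefficient reduces to $m$, so that the recurrence matches the chosen normalization of $H_m$. As a consistency check, the isometry property recorded just before Proposition~\ref{multiple formula} yields $\mathbb{E}(J_m^2) = m!\,\|h\|_{L^2}^{2m} = m!$, in agreement with the classical value $\mathbb{E}(H_m(W(h))^2) = m!$ for a standard Gaussian; I would treat this as corroboration rather than as part of the proof. Everything else is routine.
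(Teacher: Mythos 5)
Your argument is correct. The paper itself gives no proof of this proposition---it is imported verbatim from Nualart's book---and your induction, combining the product formula of Proposition~\ref{multiple formula} (the $r=0$ term giving $I_{m+1}(h^{\otimes (m+1)})$, the $r=1$ term carrying the coefficient $1!\binom{1}{1}\binom{m}{1}=m$ and the contraction $h\otimes_1 h^{\otimes m}=\|h\|_{L^2}^2\,h^{\otimes(m-1)}$) with the recurrence $H_{m+1}(x)=xH_m(x)-mH_{m-1}(x)$, is precisely the standard argument, essentially the one given in Nualart's text, so there is nothing substantive to compare. One incidental remark: the recurrence displayed in the paper's preliminaries, $xH_n(x;\sigma^2)=H_{n+1}(x;\sigma^2)+\sigma^2 H_{n-1}(x;\sigma^2)$, omits a factor of $n$ in front of $\sigma^2 H_{n-1}$; the version you use (with the factor $m$) is the correct one for the paper's normalization of $H_m$, and it is the one under which your induction closes.
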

Here, $H_m$ is defined as $H_m(x) = (-1)^{m} \exp{(x^2/2)} d^m/dx^m \exp{(-x^2/2)} $. Additionally, by considering the parameterized Hermite polynomial
$$
H_m(x; \sigma^2) = (-\sigma^2)^m \exp\left({\frac{x^2}{2\sigma^2}}\right) \frac{d^m}{dx^m} \exp\left(-{\frac{x^2}{2\sigma^2}}\right),
$$
the norm of $h$ in the proposition does not need to be $1$. The proposition extends in the following way:
$$
H_m(W(h);\| h \|^2_{L^2(\mathbb{R}_+)}) = \int h(t_1) \cdots h(t_m) \ W(dt_1) \cdots W(dt_m).
$$
For $ H_m(x; \sigma^2) $, note the recurrence relations:
\begin{gather*}
    H_0( x; \sigma^2 ) = 1,\\
    H_1( x; \sigma^2 ) = x,\\
    xH_n( x; \sigma^2 ) = H_{n+1}( x; \sigma^2 ) + \sigma^2 H_{n-1}( x; \sigma^2 ).
\end{gather*}
\begin{thm}[{\cite[Thm~1.1.4]{Nua06}} the Wiener chaos expansion]
Any square integrable random variable $F \in L^2(\Omega)$ can be expanded into a series of multiple stochastic integrals:
$$
F= \sum^{\infty}_{n=0} I_n(f_n).
$$
Here $f_0 = \mathbb{E}(F)$, and $ f_n \in L^2_s(\mathbb{R}^{n}_+)$ are uniquely determined by $F$.
\end{thm}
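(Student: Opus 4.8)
The plan is to realize $L^2(\Omega)$ as the orthogonal direct sum of the chaos subspaces
$$
\mathcal{C}_n := \overline{\{\, I_n(f) \mid f \in L^2_s(\mathbb{R}^n_+) \,\}}
$$
and to read off the asserted expansion as the corresponding orthogonal series. First I would record that the $\mathcal{C}_n$ are mutually orthogonal, which is immediate from the isometry relation $\mathbb{E}(I_m(f)I_q(g)) = 0$ for $m \neq q$ stated above. The same relation shows that $f \mapsto I_n(f)$ is an injective linear map from $L^2_s(\mathbb{R}^n_+)$ onto a dense subset of $\mathcal{C}_n$ with $\|I_n(f)\|^2_{L^2(\Omega)} = n!\,\|f\|^2_{L^2(\mathbb{R}^n_+)}$, so each element of $\mathcal{C}_n$ determines a unique $f_n$; this already yields the uniqueness clause once existence is established.

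The substantive content is completeness, namely that $\bigoplus_n \mathcal{C}_n$ is dense in $L^2(\Omega)$, and I would prove it in two steps. The first step is to show that the linear span $\mathcal{E}$ of the random variables $\exp(W(h))$, $h \in \mathcal{H}$, is dense in $L^2(\Omega)$. Given $G \in L^2(\Omega)$ orthogonal to $\mathcal{E}$, I would fix $h_1, \dots, h_n \in \mathcal{H}$ and consider
$$
\Phi(z_1, \dots, z_n) = \mathbb{E}\!\left( G \exp\!\left( \sum_{j=1}^n z_j W(h_j) \right) \right), \qquad z \in \mathbb{C}^n .
$$
Since $(W(h_1), \dots, W(h_n))$ is jointly Gaussian, $\Phi$ is well defined and entire, and it vanishes on $\mathbb{R}^n$ by hypothesis, hence $\Phi \equiv 0$. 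Restricting to the purely imaginary axis shows that the pushforward of the complex measure $G\,d\mathbb{P}$ under $(W(h_1), \dots, W(h_n))$ has vanishing Fourier transform, so $\mathbb{E}(G \mid \sigma(W(h_1), \dots, W(h_n))) = 0$. As the finite families $\{h_j\}$ generate $\mathcal{F}$, the martingale convergence theorem forces $G = 0$.

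The second step is to show $\mathcal{E} \subset \overline{\bigoplus_n \mathcal{C}_n}$, for which I would invoke the generating-function identity for the parameterized Hermite polynomials,
$$
\exp\!\left( x - \tfrac{1}{2}\sigma^2 \right) = \sum_{n=0}^\infty \frac{1}{n!}\, H_n(x; \sigma^2),
$$
with $x = W(h)$ and $\sigma^2 = \|h\|^2_{\mathcal{H}}$. By the extended form of Proposition~\ref{hermite}, each summand equals $\tfrac{1}{n!} I_n(h^{\otimes n}) \in \mathcal{C}_n$, and since $\|\tfrac{1}{n!} I_n(h^{\otimes n})\|^2_{L^2(\Omega)} = \tfrac{1}{n!}\|h\|^{2n}$ the series converges in $L^2(\Omega)$; thus $\exp(W(h))$, differing from $\exp(W(h) - \tfrac12\|h\|^2)$ only by a constant, lies in the closed orthogonal sum. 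Combining the two steps gives $L^2(\Omega) = \bigoplus_n \mathcal{C}_n$. Writing $F_n$ for the orthogonal projection of $F$ onto $\mathcal{C}_n$, existence follows by setting $F_n = I_n(f_n)$, while $f_0 = \mathbb{E}(F)$ holds because $\mathcal{C}_0$ is the space of constants, so projection onto it is the expectation. I expect the main obstacle to be the density of $\mathcal{E}$ in the first step: the passage from ``$\Phi$ vanishes on the reals'' to ``$G = 0$'' genuinely requires the analytic continuation together with the uniqueness theorem for Fourier transforms, and is the one place where soft Hilbert-space arguments do not suffice.
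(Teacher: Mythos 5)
Your proof is correct and follows essentially the same route as the paper's source: the paper states this theorem without proof, citing \cite[Thm~1.1.4]{Nua06}, and your argument --- mutual orthogonality and the isometry $\|I_n(f)\|^2_{L^2(\Omega)}=n!\,\|f\|^2_{L^2(\mathbb{R}^n_+)}$ for uniqueness, density of the exponentials $\exp(W(h))$ via analytic continuation and Fourier uniqueness, and the Hermite generating-function identity expanding $\exp\bigl(W(h)-\tfrac12\|h\|^2_{\mathcal{H}}\bigr)$ into the chaoses $I_n(h^{\otimes n})$ --- is precisely the standard proof given in that reference. There are no gaps to report.
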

We also introduce some properties of $\mathcal{D}$ and $ \delta$ when $ \mathcal{H} = L^2(\mathbb{R}_+) $. For $F\in \mathbb{D}^{1, 2}$, let $\{\mathcal{D}_t(F)\mid t\in\mathbb{R}_+\}$ be a stochastic process given by $\mathcal{D}(F)$ due to the identification between the Hilbert spaces $L^2(\Omega;\mathcal{H})$ and $L^2(\mathbb{R}_+\times \Omega)$.
\begin{prop}[{\cite[Propositions~1.2.2 and 1.2.7]{Nua06}}]
Suppose that $F\in L^2(\Omega)$ has the chaos expansion as $F=\sum^{\infty}_{n=0}I_n(f_n)$. Then $F \in \mathbb{D}^{1,2}$ if and only if 
$$
\sum^\infty_{n=1} n \| I_n(f_n) \|^2_{L^2(\Omega)} = \sum^\infty_{n=1} n \cdot n! \| f_n \|^2_{L^2(\mathbb{R}^n_+)} < \infty.
$$
If $F \in \mathbb{D}^{1,2}$, then
$$
\mathcal{D}_t(F) =\sum^\infty_{n=1}n I_{n-1}(f_n(\cdot,t))
$$
for a.e. $t\ge 0$.
\end{prop}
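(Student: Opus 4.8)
The plan is to reduce the whole statement to a single computation, namely the action of $\mathcal{D}$ on one multiple integral $I_n(f)$, and then let orthogonality of the chaoses and closability of $\mathcal{D}$ do the rest. The key identity I would establish first is $\mathcal{D}_t(I_n(f)) = n I_{n-1}(f(\cdot,t))$ for $f \in L^2_s(\mathbb{R}^n_+)$. I would prove it initially for a rank-one symmetric tensor $f = h^{\otimes n}$, where $h^{\otimes n}(t_1,\dots,t_n) = h(t_1)\cdots h(t_n)$. By the parameterized form of Proposition~\ref{hermite}, $I_n(h^{\otimes n}) = H_n(W(h);\|h\|^2_{L^2(\mathbb{R}_+)})$, which is a function of the single Gaussian $W(h)$. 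Applying the chain rule for $\mathcal{D}$ together with $\mathcal{D}(W(h)) = h$ and the Rodrigues-formula identity $\partial_x H_n(x;\sigma^2) = n H_{n-1}(x;\sigma^2)$ gives $\mathcal{D}(I_n(h^{\otimes n})) = n H_{n-1}(W(h);\|h\|^2)\otimes h = n I_{n-1}(h^{\otimes(n-1)})\otimes h$; reading this through the identification $L^2(\Omega;\mathcal{H}) \cong L^2(\mathbb{R}_+\times\Omega)$ and pulling the scalar $h(t)$ inside $I_{n-1}$ yields $\mathcal{D}_t(I_n(h^{\otimes n})) = n I_{n-1}(h^{\otimes(n-1)}h(t)) = n I_{n-1}((h^{\otimes n})(\cdot,t))$. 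Since $\{h^{\otimes n} \mid h \in \mathcal{H}\}$ spans a dense subspace of $L^2_s(\mathbb{R}^n_+)$, linearity extends the formula to the span, and I would pass to arbitrary symmetric $f$ using the $L^2$ isometry of $I_n$ together with closability.

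Next I would record the norm identity. Because $f$ is symmetric, $f(\cdot,t)$ is symmetric in its remaining $n-1$ arguments, so by Fubini and the isometry $\mathbb{E}|I_{n-1}(f(\cdot,t))|^2 = (n-1)!\,\|f(\cdot,t)\|^2_{L^2(\mathbb{R}^{n-1}_+)}$,
\[
\|\mathcal{D}(I_n(f))\|^2_{L^2(\Omega;\mathcal{H})} = n^2\!\int_0^\infty \mathbb{E}|I_{n-1}(f(\cdot,t))|^2\,dt = n^2 (n-1)!\,\|f\|^2_{L^2(\mathbb{R}^n_+)} = n\cdot n!\,\|f\|^2_{L^2(\mathbb{R}^n_+)},
\]
that is, $\|\mathcal{D}(I_n(f))\|^2 = n\|I_n(f)\|^2$. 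Moreover, for $n\neq m$ the derivatives $\mathcal{D}(I_n(f_n))$ and $\mathcal{D}(I_m(f_m))$ lie in distinct Wiener chaoses (orders $n-1$ and $m-1$) in the $\Omega$-variable and are therefore orthogonal in $L^2(\Omega;\mathcal{H})$. This reduces the statement to a diagonal computation on the chaos decomposition.

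The ($\Leftarrow$) direction is then immediate. If $\sum_n n\cdot n!\,\|f_n\|^2 < \infty$, the truncations $F_N = \sum_{n=0}^N I_n(f_n)$ converge to $F$ in $L^2(\Omega)$, while by orthogonality and the norm identity $\{\mathcal{D}(F_N)\}$ is Cauchy in $L^2(\Omega;\mathcal{H})$ with limit $\sum_{n\ge1} n I_{n-1}(f_n(\cdot,t))$. Closability of $\mathcal{D}$ then forces $F\in\mathbb{D}^{1,2}$ and identifies $\mathcal{D}_t F$ with this series.

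I expect the ($\Rightarrow$) direction to be the main obstacle: one must show that $F\in\mathbb{D}^{1,2}$ already forces $\sum_n n\cdot n!\,\|f_n\|^2 < \infty$. The clean route is to prove that $\mathcal{D}$ commutes with the Wiener-chaos projections $J_n$ (onto $\{I_n(g)\}$), in the sense that $J_n F\in\mathbb{D}^{1,2}$ and $\mathcal{D}(J_n F)$ is the projection of $\mathcal{D}F$ onto the $(n-1)$-th chaos in the $\Omega$-variable—equivalently, that $\mathcal{D}$ intertwines the Ornstein--Uhlenbeck (Mehler) semigroup $T_r = \sum_n e^{-nr}J_n$ via $\mathcal{D}T_r F = e^{-r}T_r\mathcal{D}F$. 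Granting this, $\mathcal{D}(F_N) = \Pi_N\mathcal{D}F$ for an orthogonal projection $\Pi_N$ on $L^2(\Omega;\mathcal{H})$, whence $\sum_{n=1}^N n\cdot n!\,\|f_n\|^2 = \|\mathcal{D}F_N\|^2 = \|\Pi_N\mathcal{D}F\|^2 \le \|\mathcal{D}F\|^2 < \infty$, and letting $N\to\infty$ yields the criterion. The delicate point is justifying this commutation for a general $F\in\mathbb{D}^{1,2}$ rather than for finite chaos sums: one approximates $F$ in the graph norm by smooth random variables and uses the boundedness of $J_n$ (resp. $T_r$) on $L^2(\Omega)$ together with the closedness of $\mathcal{D}$, and here it is genuinely the closedness of $\mathcal{D}$—not merely its closability on a core—that controls the behaviour of the generally infinite chaos expansions under differentiation.
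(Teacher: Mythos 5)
First, a point of comparison: the paper does not prove this proposition at all. It is one of the quoted preliminaries of subsection~\ref{subsection_Malliavin}, cited directly from Nualart, so there is no in-paper argument to measure your proposal against; the relevant benchmark is the standard proof in the cited source, and your proposal reconstructs exactly that proof. Its architecture is sound: the single-chaos formula $\mathcal{D}_t(I_n(f))=nI_{n-1}(f(\cdot,t))$ obtained from Proposition~\ref{hermite} together with $\partial_x H_n(x;\sigma^2)=nH_{n-1}(x;\sigma^2)$, then extended by polarization, the isometry, and closedness; the norm identity $\|\mathcal{D}(I_n(f))\|^2_{L^2(\Omega;\mathcal{H})}=n\cdot n!\,\|f\|^2_{L^2(\mathbb{R}^n_+)}$ plus orthogonality of the derivatives of distinct chaoses; the ($\Leftarrow$) direction by closedness of the extended operator; and the ($\Rightarrow$) direction via commutation of $\mathcal{D}$ with the chaos projections. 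One minor repair: $H_n$ is not in $C^\infty_b$, so $H_n(W(h))$ is not literally in the paper's class $\mathcal{S}$; you need to enlarge $\mathcal{S}$ to polynomials (or smooth functions of polynomial growth), which is routine but should be said.

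The one place where your sketch has a genuine hole as written is the base case of the commutation lemma. You propose to obtain $\mathcal{D}(J_nF)=J_{n-1}(\mathcal{D}F)$ for general $F\in\mathbb{D}^{1,2}$ by approximating $F$ in graph norm by smooth random variables and invoking boundedness of $J_n$ together with closedness of $\mathcal{D}$. That limiting argument is fine, but it presupposes the commutation identity for the smooth approximants $G\in\mathcal{S}$, and a smooth cylindrical random variable does \emph{not} have a finite chaos expansion, so this base case does not follow from your step 1 (where the identity is clear only for finite chaos sums). It must be proved separately, either (i) by the finite-dimensional Hermite expansion of $f$ in $L^2(\gamma_m)$, where Gaussian integration by parts shifts Hermite coefficients and shows term-by-term agreement, or (ii) by the route you mention only parenthetically, which is in fact the cleanest closure of the gap: for cylindrical smooth $G$, Mehler's formula and differentiation under the integral sign give $\mathcal{D}T_rG=e^{-r}T_r\mathcal{D}G$ directly from the chain rule; extend this intertwining to all of $\mathbb{D}^{1,2}$ by closedness; then apply the already-proved ($\Leftarrow$) direction to $T_rF$, whose chaos norms decay like $e^{-nr}$, and compare coefficients of $e^{-nr}$ to identify the chaos components of $\mathcal{D}F$, which yields $\sum_{n\ge1}n\cdot n!\,\|f_n\|^2_{L^2(\mathbb{R}^n_+)}\le\|\mathcal{D}F\|^2<\infty$. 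With either (i) or (ii) spelled out, your proof is complete; as it stands, the commutation for smooth random variables is asserted where it needs to be proved.
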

\begin{prop}[{\cite[Prop.~1.3.7]{Nua06}}]
    $u\in L^2(\mathbb{R}_+ \times \Omega)$ with the expansion $u(t) =\sum^\infty_{n=0}I_n(f_n(\cdot,t)).$ Then $u \in \mathrm{Dom}(\delta)$ if and only if 
    $$
    \delta(u) = \sum^\infty_{n=0} I_{n+1}(\Tilde{f_n})
    $$
    converges in $L^2(\Omega).$
\end{prop}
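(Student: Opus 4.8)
The plan is to establish the characterisation directly from the definition of $\delta$ as the adjoint of $\mathcal{D}$, using the chaos expansion to reduce the abstract duality relation to a pairing of coefficient functions. Recall that $u \in \mathrm{Dom}(\delta)$ means there exists an element $\delta(u) \in L^2(\Omega)$ satisfying $\mathbb{E}(\langle \mathcal{D}F, u \rangle_{\mathcal{H}}) = \mathbb{E}(F \delta(u))$ for every $F \in \mathbb{D}^{1,2}$. The guiding idea is to test this identity against random variables living in a single Wiener chaos, $F = I_m(g)$ with $g \in L^2_s(\mathbb{R}^m_+)$, since these span $L^2(\Omega)$ and their derivatives are explicit.

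First I would compute the left-hand side for $F = I_m(g)$. Using the identification $L^2(\Omega;\mathcal{H}) \cong L^2(\mathbb{R}_+ \times \Omega)$ together with the formula $\mathcal{D}_t(I_m(g)) = m\, I_{m-1}(g(\cdot,t))$, the orthogonality and isometry of the multiple integrals ensure that the cross terms in $\mathbb{E}(\mathcal{D}_t(F)\, u(t))$ survive only when $u(t)$ contributes its $(m-1)$-th chaos. This gives
$$
\mathbb{E}(\langle \mathcal{D}F, u \rangle_{\mathcal{H}}) = \int_0^\infty m \cdot (m-1)!\, \langle g(\cdot,t), f_{m-1}(\cdot,t)\rangle_{L^2(\mathbb{R}^{m-1}_+)}\, dt = m!\, \langle g, f_{m-1}\rangle_{L^2(\mathbb{R}^m_+)},
$$
where $f_{m-1}$ is regarded as an element of $L^2(\mathbb{R}^m_+)$. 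Because $g$ is symmetric and symmetrization is the orthogonal projection onto $L^2_s(\mathbb{R}^m_+)$, this equals $m!\, \langle g, \tilde{f}_{m-1}\rangle_{L^2(\mathbb{R}^m_+)} = \mathbb{E}(I_m(g)\, I_m(\tilde{f}_{m-1}))$.

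Combining these computations yields both directions. If $G \coloneqq \sum_{n=0}^\infty I_{n+1}(\tilde{f}_n)$ converges in $L^2(\Omega)$, then testing against $F = I_m(g)$ and using orthogonality shows $\mathbb{E}(FG) = m!\,\langle g, \tilde{f}_{m-1}\rangle = \mathbb{E}(\langle \mathcal{D}F, u\rangle_{\mathcal{H}})$; extending by linearity and $L^2$-continuity over the dense span of single-chaos variables gives the duality relation for all $F \in \mathbb{D}^{1,2}$, so $u \in \mathrm{Dom}(\delta)$ and $\delta(u) = G$. Conversely, if $u \in \mathrm{Dom}(\delta)$, I would expand $\delta(u) = \sum_{m} I_m(h_m)$ in chaos and identify the coefficients: testing against $F = I_m(g)$ forces $m!\,\langle g, h_m\rangle = m!\,\langle g, \tilde{f}_{m-1}\rangle$ for all symmetric $g$, whence $h_m = \tilde{f}_{m-1}$ for $m \ge 1$, while testing against constants gives $h_0 = 0$; the convergence of $\sum_{n} I_{n+1}(\tilde{f}_n)$ is then merely the convergence of the chaos expansion of $\delta(u) \in L^2(\Omega)$.

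The main obstacle I anticipate is the bookkeeping needed to interchange the time-integral, the expectation, and the two infinite chaos sums, and to verify that the formal pairing is absolutely summable so that Fubini's theorem and the rearrangements are legitimate; this is where one must invoke $u \in L^2(\mathbb{R}_+ \times \Omega)$, equivalently $\sum_n n!\, \|f_n\|^2_{L^2(\mathbb{R}^{n+1}_+)} < \infty$, to control the partial sums. The remaining point, the identity $\langle g, f_{m-1}\rangle = \langle g, \tilde{f}_{m-1}\rangle$ for symmetric $g$, is routine once symmetrization is recognised as a self-adjoint projection.
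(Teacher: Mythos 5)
Your argument is correct and is essentially the canonical one: the paper states this proposition without proof (citing Nualart), and both Nualart's proof and the paper's proof of its own Clifford analogue (the proposition in subsection~\ref{subsection_Mcf} identifying $\delta(u)=\sum_{n}J_{n+1}(\hat{f}_n)$) proceed exactly as you do, by testing the adjoint relation against single-chaos variables, computing the pairing $\mathbb{E}(\langle \mathcal{D}I_m(g),u\rangle_{\mathcal{H}})=m!\langle g,\tilde{f}_{m-1}\rangle$, and matching chaos coefficients in both directions. The one loose phrase is ``$L^2$-continuity'' in your forward direction: the functional $F\mapsto\mathbb{E}(\langle\mathcal{D}F,u\rangle_{\mathcal{H}})$ is not a priori $L^2$-bounded (that boundedness is precisely what is being proved), so the extension to all $F\in\mathbb{D}^{1,2}$ should instead use that the partial sums $F_N$ of the chaos expansion of $F$ converge in the graph norm $\|\cdot\|_{1,2}$, i.e.\ $\mathcal{D}F_N\to\mathcal{D}F$ in $L^2(\mathbb{R}_+\times\Omega)$, so that both sides of the identity pass to the limit.
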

For the next two propositions, we assume $ \mathcal{H} = L^2([0,1]) $ and let $W = \{ W(t) \mid 0 \le t \le 1\}$ represent the $1$-dimensional Brownian motion. The following proposition describes the relationship between the It\^o stochastic integral and the divergence operator. Let $A \in\mathcal{B}$. We define $\mathcal{F}_A$ by the $\sigma$-field generated by $\{ W(B) \mid B\subset A, B \in \mathcal{B}_0 \}$, and let $\mathcal{F}_t = \mathcal{F}_{[0,t]}$. We denote by $L^2_a$ the closed subspace of $L^2([0, 1]\times \Omega)$ formed by the $\{ \mathcal{F}_t \}$-adapted processes.
\begin{prop}[{\cite[Prop.~1.3.11]{Nua06}}]
    $L^2_a \subset \mathrm{Dom}(\delta)$, and the operator $\delta$ restricted to $L^2_a$ coincides with the It\^o integral:
    $$
    \delta(u) = \int^1_0 u_t\ dW_t
    $$
    for $u \in L^2_a $.
\end{prop}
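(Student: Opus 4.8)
The plan is to reduce to elementary adapted processes, use the two multiplicative properties of $\delta$ already recorded, and then close up by density together with the It\^o isometry.

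First I would treat the building blocks $u = F\,\mathbbm{1}_{(a,b]}$ with $0 \le a < b \le 1$ and $F \in \mathcal{S}$ of the special form $F = f(W(h_1), \dots, W(h_n))$ where every $h_i$ is supported in $[0,a]$, so that $F$ is $\mathcal{F}_a$-measurable. Viewing the deterministic kernel $\mathbbm{1}_{(a,b]}$ as the element $1 \otimes \mathbbm{1}_{(a,b]} \in \mathcal{S}_{\mathcal{H}}$, the property $\delta(Fu) = F\delta(u) - \langle u, \mathcal{D}(F)\rangle_{\mathcal{H}}$ gives
$$
\delta(F\,\mathbbm{1}_{(a,b]}) = F\,\delta(\mathbbm{1}_{(a,b]}) - \langle \mathbbm{1}_{(a,b]}, \mathcal{D}(F)\rangle_{\mathcal{H}}.
$$
Here $\delta(\mathbbm{1}_{(a,b]}) = W(\mathbbm{1}_{(a,b]}) = W_b - W_a$ because $\mathcal{D}(1) = 0$, while the definition of $\mathcal{D}$ yields $\mathcal{D}_t F = \sum_i \partial_i f(\dots)\,h_i(t)$, which vanishes for $t > a$ since each $h_i$ is supported in $[0,a]$. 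Consequently $\langle \mathbbm{1}_{(a,b]}, \mathcal{D}(F)\rangle_{\mathcal{H}} = \int_a^b \mathcal{D}_t F\,dt = 0$, and we obtain $\delta(F\,\mathbbm{1}_{(a,b]}) = F(W_b - W_a)$, which is exactly the It\^o integral of this elementary process. By linearity of both $\delta$ and the It\^o integral, the identity $\delta(u) = \int_0^1 u_t\,dW_t$ then holds for every simple adapted process $u = \sum_j F_j\,\mathbbm{1}_{(t_j, t_{j+1}]}$ with smooth, $\mathcal{F}_{t_j}$-measurable coefficients $F_j$.

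The final step is an approximation argument. Such simple adapted processes are dense in $L^2_a$: one first approximates a general $u \in L^2_a$ by simple adapted processes with $L^2$ coefficients (the standard construction underlying the It\^o integral), and then replaces each coefficient by a smooth $\mathcal{F}_{t_j}$-measurable random variable, using that variables of the form $f(W(h_1), \dots, W(h_n))$ with $h_i$ supported in $[0,t_j]$ are dense in $L^2(\Omega, \mathcal{F}_{t_j})$. Choosing simple adapted $u^{(k)} \to u$ in $L^2([0,1]\times\Omega)$, the It\^o isometry forces $\int_0^1 u^{(k)}_t\,dW_t = \delta(u^{(k)})$ to converge in $L^2(\Omega)$ to $\int_0^1 u_t\,dW_t$. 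Since $\delta$, being the adjoint of the densely defined $\mathcal{D}$, is a closed operator, and since $u^{(k)} \to u$ with $\delta(u^{(k)})$ convergent, we conclude $u \in \mathrm{Dom}(\delta)$ and $\delta(u) = \int_0^1 u_t\,dW_t$.

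I expect the genuine content to lie in the density step rather than the algebra: verifying that simple adapted processes with smooth coefficients are dense in $L^2_a$, and in particular that smooth $\mathcal{F}_{t_j}$-measurable functionals approximate arbitrary $\mathcal{F}_{t_j}$-measurable ones, is where the measurability and regularity must be handled with care. Everything downstream is then automatic from the It\^o isometry and the closedness of $\delta$. An alternative route, closer to the chaos-expansion proposition stated just above, would be to characterize $u \in L^2_a$ by the requirement that each kernel $f_n(\cdot, t)$ in $u(t) = \sum_n I_n(f_n(\cdot, t))$ be supported on $[0,t]^n$, and then to check directly that $\int_0^1 I_n(f_n(\cdot,t))\,dW_t = I_{n+1}(\tilde f_n)$ matches the expansion $\delta(u) = \sum_n I_{n+1}(\tilde f_n)$; I would regard the approach above as the cleaner of the two.
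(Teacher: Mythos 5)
Your proof is correct, and it is essentially the standard argument this proposition is cited from (Nualart), which is also exactly the strategy the paper itself deploys for the fermionic analogue in Proposition~\ref{coinside with Ito-Clifford}: compute $\delta$ on elementary adapted blocks using the product rule $\delta(Fu)=F\delta(u)-\langle u,\mathcal{D}(F)\rangle_{\mathcal{H}}$, note that the correction term vanishes because $\mathcal{D}_t F=0$ for $t$ beyond the adaptedness horizon, and then extend to all of $L^2_a$ by density of simple adapted processes, the It\^o isometry, and closedness of $\delta$. No gaps to report.
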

The following formula is called the Clark-Ocone formula.
\begin{prop}[{\cite[Prop.~1.3.14]{Nua06}}]
    Let $F \in \mathbb{D}^{1,2} $. It holds that
    $$
    F = \mathbb{E}(F) + \int^1_0 \mathbb{E}(\mathcal{D}_t(F)|\mathcal{F}_t)\ dW_t.
    $$
\end{prop}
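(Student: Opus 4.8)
The plan is to reduce everything to the Wiener chaos expansion and the two chaos-level identities already recorded for $\mathcal{D}$ and $\delta$. Write $F = \sum_{n=0}^\infty I_n(f_n)$ with $f_n \in L^2_s(\mathbb{R}_+^n)$, so that $\mathbb{E}(F) = I_0(f_0) = f_0$ and the assertion becomes
$$
F - \mathbb{E}(F) = \int_0^1 \mathbb{E}(\mathcal{D}_t(F)\mid \mathcal{F}_t)\, dW_t .
$$
Since $t \mapsto \mathbb{E}(\mathcal{D}_t(F)\mid\mathcal{F}_t)$ is by construction $\{\mathcal{F}_t\}$-adapted, the preceding proposition identifying $\delta$ with the It\^o integral on $L^2_a$ lets me replace the right-hand side by $\delta(u)$ with $u_t = \mathbb{E}(\mathcal{D}_t(F)\mid\mathcal{F}_t)$, after checking $u \in L^2([0,1]\times\Omega)$, which follows from $F \in \mathbb{D}^{1,2}$ together with Jensen's inequality. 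It then suffices to show that the chaos expansion of $\delta(u)$ coincides with $\sum_{n\ge1} I_n(f_n)$.

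Next I would compute the integrand explicitly. By the derivative formula, $\mathcal{D}_t(F) = \sum_{n\ge1} n\, I_{n-1}(f_n(\cdot,t))$. The one step needing a short lemma is the conditional-expectation rule for multiple integrals, namely $\mathbb{E}(I_{m}(g)\mid\mathcal{F}_t) = I_m(g\,\mathbbm{1}_{[0,t]^m})$ for $g \in L^2_s(\mathbb{R}_+^m)$; this is seen by writing $I_m(g)$ as an iterated It\^o integral over the simplex and invoking the martingale property, or equivalently by projecting exponential vectors. Granting it,
$$
u_t = \mathbb{E}(\mathcal{D}_t(F)\mid\mathcal{F}_t) = \sum_{n\ge1} n\, I_{n-1}\big(f_n(\cdot,t)\,\mathbbm{1}_{[0,t]^{n-1}}\big),
$$
so the order-$(n-1)$ kernel of $u$ in the variables $(t_1,\dots,t_{n-1},t)$ is $g_{n-1}(t_1,\dots,t_{n-1},t) = n\, f_n(t_1,\dots,t_{n-1},t)\,\mathbbm{1}_{\{t_1,\dots,t_{n-1}\le t\}}$.

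The heart of the argument is the symmetrization computation. By the chaos formula for the divergence, $\delta(u) = \sum_{n\ge1} I_n(\widetilde{g}_{n-1})$, where $\widetilde{g}_{n-1}$ is the full symmetrization over all $n$ arguments. Because $f_n$ is already symmetric, symmetrizing pulls $f_n$ out and leaves the averaged indicator: for a.e.\ point with distinct coordinates, only the permutations placing the largest coordinate in the distinguished slot survive the cutoff $\mathbbm{1}_{\{t_1,\dots,t_{n-1}\le t\}}$, and there are exactly $(n-1)!$ of them. Hence $\widetilde{g}_{n-1} = \frac{n\,(n-1)!}{n!}\, f_n = f_n$, giving $\delta(u) = \sum_{n\ge1} I_n(f_n) = F - \mathbb{E}(F)$, which is the claim.

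I expect the main obstacle to be the bookkeeping in this last step: carefully justifying the conditional-expectation rule for multiple integrals and then matching the normalizing constants so that the symmetrized truncated kernel reproduces $f_n$ exactly. Everything else (adaptedness, $L^2$-membership, the passage from the It\^o integral to $\delta$) is routine once the chaos-level identities are in hand, and the convergence of all the series is controlled by the $\mathbb{D}^{1,2}$ hypothesis through the isometry $\mathbb{E}(I_n(f_n)^2) = n!\,\|f_n\|^2_{L^2(\mathbb{R}^n_+)}$.
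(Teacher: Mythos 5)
Your proof is correct, and it follows essentially the same route as the paper's own argument for this formula's anti-symmetric analogue (Theorem~\ref{The anti-symmetric Clark-Ocone formula}): expand in chaos, use the truncation rule $\mathbb{E}(I_m(g)\mid\mathcal{F}_t)=I_m(g\,\mathbbm{1}_{[0,t]^m})$, observe that a.e.\ exactly the permutations placing the largest coordinate in the distinguished slot survive (the paper's identity $\sum_{i=1}^p \mathbbm{1}_{[0,x_i]^{p-1}}(x_1,\overset{i}{\Check{\dots}},x_p)=1$), and identify $\delta$ on adapted processes with the stochastic integral. Your normalization $\widetilde{g}_{n-1}=\frac{n\,(n-1)!}{n!}f_n=f_n$ and the $L^2$/domain checks via $F\in\mathbb{D}^{1,2}$ are exactly what is needed, so there is no gap.
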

So far, we have briefly reviewed Malliavin calculus, and it is important to note that $\mathcal{D}_t$ and $ \delta$ satisfy the canonical commutation relations (CCR) $[\mathcal{D}_t, \delta] = \delta_t \otimes \mathrm{id}_{L^2(\Omega)} $, where $\delta_t$ is the Dirac delta function concentrated at $t$. This suggests that there might be a hidden non-commutative structure in Brownian motion and Malliavin calculus. In the next subsection, we will introduce the relationship between the creation and annihilation operators of a boson, and define Brownian motion on the boson.

\subsection{Relationship between Brownian Motion and a Boson}\label{subsection_BB_relation}
The content, terminology, and notation in this subsection are based on \cite{Par92, Mey95, FLS01}. As previewed at the end of subsection~\ref{subsection_Malliavin}, we will introduce a non-commutative probability space corresponding to Brownian motion and list some of its properties.
Let $\mathcal{H}$ be a real separable Hilbert space, and let $\mathcal{H}_{\mathbb{C}}$ be its complexification. The conjugation $J : \mathcal{H}_{\mathbb{C}} \to \mathcal{H}_{\mathbb{C}} $ is defined by 
$$
J(h_1 + \sqrt{-1}h_2) = h_1 - \sqrt{-1}h_2
$$
for $h_1, h_2 \in \mathcal{H}$.
The inner product on $\mathcal{H}_{\mathbb{C}}$ is given by 
\begin{align*}
\langle h_1+ \sqrt{-1}h_2 , k_1 +  \sqrt{-1}k_2\rangle_{\mathcal{H}_\mathbb{C}} &\coloneq \langle h_1, k_1 \rangle_{\mathcal{H}} + \langle h_2, k_2 \rangle_{\mathcal{H}} \\
&\quad+ \sqrt{-1} \langle h_1, k_2 \rangle_{\mathcal{H}} - \sqrt{-1} \langle h_2, k_1 \rangle_{\mathcal{H}}
\end{align*}
for $h_1, h_2, k_1, k_2 \in \mathcal{H}$.
An element $h$ is said to be real if $J(h) = h$. Let $n\in\mathbb{N}$. $\odot$ represents the symmetric tensor product:
$$
\xi_1 \odot \cdots \odot \xi_n = \frac{1}{n!} \sum_{\sigma\in S_n} \xi_{\sigma(1)} \otimes \cdots \otimes \xi_{\sigma(n)} 
$$
for $\xi_1,\dots,\xi_n \in \mathcal{H}_\mathbb{C}$.
We define $\mathcal{H}_{\mathbb{C}}^{\odot n}$ to be the closed subspace of $\mathcal{H}_{\mathbb{C}}^{\otimes n}$ generated by the set $\{ \xi_1 \odot \cdots \odot \xi_n \mid \xi_1,\dots,\xi_n \in \mathcal{H}_\mathbb{C} \}$.
The inner product on $\mathcal{H}_{\mathbb{C}}^{\odot n}$ is inherited from the inner product on $\mathcal{H}_{\mathbb{C}}^{\otimes n}$.
For $f \in \mathcal{H}_{\mathbb{C}}^{\odot n}$ and $ g\in \mathcal{H}_{\mathbb{C}}^{\odot m}$, the notation $f \odot g$ is also used to denote the symmetrization of $f\otimes g$.
Let $\Gamma_s = \bigoplus^\infty_{n=0} \mathcal{H}_{\mathbb{C}}^{\odot n} $ denote the symmetric Fock space over $\mathcal{H}_{\mathbb{C}}$, where $\mathcal{H}_{\mathbb{C}}^{\odot 0}=\mathbb{C}$. The vacuum vector is denoted by $\Omega \in \Gamma_s$. The set $\{ \xi^{\otimes n} \mid \xi \in \mathcal{H}_{\mathbb{C}}, n\in \mathbb{N} \}$ is total in $\Gamma_s $. The creation and annihilation operators $a^*(h), a(h)$, where $h\in \mathcal{H}_{\mathbb{C}}$, are closed and unbounded operators that are adjoint to each other. They act on $\xi^{\otimes n}$ as follows:
\begin{gather*}
    a^*(h)\xi^{\otimes n}=\sqrt{n}\langle h, \xi \rangle_{\mathcal{H}_{\mathbb{C}}}\xi^{\otimes n-1},\\
    a(h)\xi^{\otimes n}=\sqrt{n+1} h \odot \xi^{\otimes n}.
\end{gather*}
Furthermore, the operators $a^*$ and $a$ satisfy CCR:
\begin{gather*}
    [a(h), a^*(k)]= \langle h, k \rangle_{\mathcal{H}_{\mathbb{C}}},\\
    [a(h), a(k)]= [a^*(h), a^*(k)]=0
\end{gather*}
for $h, k \in \mathcal{H}_{\mathbb{C}}.$ We define $B(h) \coloneq a^*(h) + a(J(h))$, where $h\in\mathcal{H}_{\mathbb{C}}.$
\begin{thm}
    For every $m\in\mathbb{N}$, let $f_1, \dots, f_{2m} \in \mathcal{H}_{\mathbb{C}}$. Then
    \begin{gather*}
        \langle \Omega, B(f_1)\dots B(f_{2m-1})\Omega \rangle_{\Gamma_s}=0,\\
        \langle \Omega, B(f_1)\dots B(f_{2m})\Omega \rangle_{\Gamma_s}= \sum \langle J(f_{l_1}), f_{r_1} \rangle_{\mathcal{H}_{\mathbb{C}}}\cdots \langle J(f_{l_m}), f_{r_m} \rangle_{\mathcal{H}_{\mathbb{C}}} ,
    \end{gather*}
    where the sum is taken over all possible ways of partitioning $\{ 1, 2, \dots, 2m \}$ into $m$ distinct pairs $(l_1, r_1), \dots, (l_m, r_m),\ l_k < r_k$ (there are $(2m)!/2^m m!$ such partitions).
\end{thm}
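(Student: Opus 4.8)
The plan is to prove both identities at once by the classical Wick--Isserlis induction, using only the canonical commutation relations for $a,a^*$ and the single structural fact that $a^*(h)$ annihilates the vacuum. The whole argument rests on one reduction identity that peels off the leftmost field and rewrites a product of $N$ fields as a sum of products of $N-2$ fields, after which everything follows by induction on $N$.

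To derive that identity, write $\Phi = B(f_2)\cdots B(f_N)\Omega$ and split $B(f_1) = a^*(f_1) + a(J(f_1))$. The creation part drops out on the left, since $\langle\Omega, a(J(f_1))\Phi\rangle = \langle a^*(J(f_1))\Omega, \Phi\rangle = 0$ by the adjoint relation together with $a^*(J(f_1))\Omega = 0$. Thus only the annihilation operator $a^*(f_1)$ survives, and I would commute it to the right through the product. Since $a^*(f_1)$ commutes with every $a^*(f_j)$ and the commutator $[a^*(f_1), a(J(f_j))]$ is the scalar $c_{1j} = \langle J(f_1), f_j \rangle_{\mathcal{H}_{\mathbb{C}}}$ (by the canonical commutation relations and the symmetry $\langle J(f_i), f_j\rangle_{\mathcal{H}_{\mathbb{C}}} = \langle J(f_j), f_i\rangle_{\mathcal{H}_{\mathbb{C}}}$ of the contraction), one gets $a^*(f_1)B(f_j) = B(f_j)a^*(f_1) + c_{1j}$. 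Commuting all the way across, the term in which $a^*(f_1)$ finally reaches $\Omega$ vanishes, leaving
$$
\langle\Omega, B(f_1)\cdots B(f_N)\Omega\rangle_{\Gamma_s} = \sum_{j=2}^{N} c_{1j}\,\langle\Omega, B(f_2)\cdots\widehat{B(f_j)}\cdots B(f_N)\Omega\rangle_{\Gamma_s},
$$
where $\widehat{\phantom{B}}$ denotes omission. For $N=1$ the right-hand side is empty, so $\langle\Omega, B(f_1)\Omega\rangle_{\Gamma_s}=0$; inductively, for odd $N$ every surviving expectation has an odd number $N-2$ of factors and hence vanishes, giving the first identity (this is also immediate from a parity argument, as $B(f)$ shifts the chaos degree by $\pm1$ and an odd product sends $\Omega\in\mathcal{H}_{\mathbb{C}}^{\odot 0}$ into a sum of odd-degree chaos, orthogonal to $\Omega$). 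For even $N=2m$, the induction hypothesis expands each $(2m-2)$-fold expectation as a sum over perfect matchings of the remaining indices; prepending the factor $c_{1j}$ corresponds to pairing the index $1$ with $j$, and ranging $j$ over $\{2,\dots,2m\}$ reconstructs exactly the sum over all $(2m)!/(2^m m!)$ matchings of $\{1,\dots,2m\}$, which is the second identity.

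The routine but essential bookkeeping is to check that weighting the $(2m-3)!!$ matchings of each $(2m-2)$-element set by $c_{1j}$ assembles precisely the $(2m-1)!!$ matchings of the full set with no double counting; the symmetry of the contraction is what allows the matchings to be taken unordered. I expect the genuine obstacle to be pinning down the contraction constant itself: one must track carefully how the conjugation $J$ and the Hermitian inner product interact inside $[a^*(f_1), a(J(f_j))]$ so that the scalar comes out as the stated $\langle J(f_1), f_j\rangle_{\mathcal{H}_{\mathbb{C}}}$ rather than its complex conjugate or its negative. Once the sign and conjugation conventions in the canonical commutation relations are fixed consistently with the actions of $a$ and $a^*$ on $\xi^{\otimes n}$, the remainder is the standard Wick recursion.
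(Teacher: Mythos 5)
Your proof is correct and is precisely the argument the paper intends: the paper gives no written proof at all, only the remark that the result ``can be proved by iteratively using the CCR,'' and your vacuum--reduction identity
$\langle\Omega, B(f_1)\cdots B(f_N)\Omega\rangle_{\Gamma_s}=\sum_{j\ge 2}c_{1j}\,\langle\Omega, B(f_2)\cdots\widehat{B(f_j)}\cdots B(f_N)\Omega\rangle_{\Gamma_s}$ followed by induction over pairings is exactly that standard Wick recursion, with the pairing count $(2m-1)\cdot(2m-3)!!=(2m)!/(2^m m!)$ handled correctly. The caveat you raise about pinning down the contraction constant is genuine but is a defect of the paper's conventions rather than of your argument: the paper's displayed actions of $a$ and $a^*$ on $\xi^{\otimes n}$ are swapped relative to its stated CCR $[a(h),a^*(k)]=\langle h,k\rangle_{\mathcal{H}_{\mathbb{C}}}$, and under the internally consistent reading (where $a$ annihilates, $a^*$ creates, and hence the operator that survives against $\langle\Omega|$ and is commuted to the right is $a(J(f_1))$, producing $[a(J(f_1)),a^*(f_j)]=\langle J(f_1),f_j\rangle_{\mathcal{H}_{\mathbb{C}}}$), your recursion yields the stated formula verbatim.
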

This result can be proved by iteratively using the CCR (cf. Wick's theorem \cite{wick1950evaluation}). From now on, let $\mathcal{H}=L^2(\mathbb{R}_+),\ a_t = a(\mathbbm{1}_{[0, t]}),\ a^*_t = a^*(\mathbbm{1}_{[0, t]})$, and $ B_t = a_t + a^*_t$.
\begin{cor}
    The moments of $B_t$ are equal to those of a standard Brownian motion.
\end{cor}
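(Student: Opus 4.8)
The plan is to recognize $B_t$ as the special case $B(\mathbbm{1}_{[0,t]})$ of the family $B(h)$ and then apply the preceding theorem directly, with the vacuum expectation $\langle \Omega, \cdot\, \Omega \rangle_{\Gamma_s}$ playing the role of $\mathbb{E}$. First I would observe that the indicator $\mathbbm{1}_{[0,t]}$ is a real element of $\mathcal{H}_{\mathbb{C}}$, so $J(\mathbbm{1}_{[0,t]}) = \mathbbm{1}_{[0,t]}$, whence
$$
B(\mathbbm{1}_{[0,t]}) = a^*(\mathbbm{1}_{[0,t]}) + a(J(\mathbbm{1}_{[0,t]})) = a^*_t + a_t = B_t.
$$
Thus the $n$-th moment to be computed is $\langle \Omega, B_t^n \Omega \rangle_{\Gamma_s}$, which corresponds to $\mathbb{E}(W_t^n)$ in this non-commutative probability space.

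Next I would apply the theorem with $f_1 = \cdots = f_n = \mathbbm{1}_{[0,t]}$. For odd $n = 2m-1$, the theorem immediately yields $\langle \Omega, B_t^{2m-1} \Omega \rangle_{\Gamma_s} = 0$, matching the vanishing odd moments of Brownian motion. For even $n = 2m$, every pairing in the sum contributes the same factor, since each inner product appearing is
$$
\langle J(\mathbbm{1}_{[0,t]}), \mathbbm{1}_{[0,t]} \rangle_{\mathcal{H}_{\mathbb{C}}} = \langle \mathbbm{1}_{[0,t]}, \mathbbm{1}_{[0,t]} \rangle_{\mathcal{H}_{\mathbb{C}}} = \| \mathbbm{1}_{[0,t]} \|^2_{L^2(\mathbb{R}_+)} = t,
$$
so each of the $m$ pairs contributes $t$ and every partition contributes $t^m$.

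Since the theorem records that there are exactly $(2m)!/(2^m m!)$ such partitions, I would conclude
$$
\langle \Omega, B_t^{2m} \Omega \rangle_{\Gamma_s} = \frac{(2m)!}{2^m m!}\, t^m = (2m-1)!!\, t^m,
$$
which is precisely the $2m$-th moment of a centered Gaussian of variance $t$, that is, of $W_t$. Combined with the odd case, this shows that all moments of $B_t$ coincide with those of a standard Brownian motion.

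The argument is essentially routine once the pairing formula is available; the only points that require care are checking that $J$ acts trivially on the real indicator so that $B_t$ genuinely equals $B(\mathbbm{1}_{[0,t]})$, and correctly matching the count of pairings with the double factorial $(2m-1)!!$. I do not expect a substantial obstacle here, as the essential work — the Wick-type pairing identity — has already been carried out in the theorem; this corollary is simply its specialization to a single direction $\mathbbm{1}_{[0,t]}$.
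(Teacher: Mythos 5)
Your proof is correct and is exactly the argument the paper intends: the corollary is stated as an immediate consequence of the preceding Wick-pairing theorem, specialized to $f_1 = \cdots = f_n = \mathbbm{1}_{[0,t]}$, with each pairing contributing $\|\mathbbm{1}_{[0,t]}\|^2 = t$ and the count $(2m)!/(2^m m!) = (2m-1)!!$ recovering the Gaussian moments. The only point worth adding is that to get the process-level statement (joint moments of $B_t$ at several times) one runs the same computation with $f_i = \mathbbm{1}_{[0,t_i]}$ for distinct times $t_i$, which yields $\sum_{\text{pairings}} \prod_k \langle \mathbbm{1}_{[0,t_{l_k}]}, \mathbbm{1}_{[0,t_{r_k}]} \rangle = \sum_{\text{pairings}} \prod_k (t_{l_k} \wedge t_{r_k})$, exactly Isserlis' formula for a centered Gaussian family with covariance $s \wedge t$.
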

Now, as we observed in Proposition~\ref{hermite}, the multiple integrals of Brownian motion are represented by Hermite polynomials. The following theorem describes how this can be expressed from the perspective of non-commutative probability theory.
\begin{thm}
    $$
    H_m(B_t)= :B_t^n: =\int_0^t \cdots\int_0^t \ dB_{t_1} \cdots dB_{t_n},
    $$
    where $: \cdot :$ denotes the Wick-ordered product. The integral is interpreted as the iterated integral in the sense of Hudson-Parthasarathy \cite{Par92,HS81}.
\end{thm}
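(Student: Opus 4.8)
The plan is to read the statement with $m=n$ and $H_n(B_t)$ standing for the parametrized Hermite polynomial $H_n(B_t;t)$, since $B_t$ reproduces the moments of a Brownian motion of variance $\|\mathbbm{1}_{[0,t]}\|^2_{\mathcal{H}_{\mathbb{C}}}=t$, and to prove the two equalities separately. For the algebraic identity $H_n(B_t;t)={:}B_t^n{:}$, I would first note that Wick-ordering the monomials of $B_t^n=(a_t+a_t^*)^n$ gives the closed form ${:}B_t^n{:}=\sum_{k=0}^n\binom{n}{k}(a_t^*)^k a_t^{n-k}$, where $a_t=a(\mathbbm{1}_{[0,t]})$ and $a_t^*=a^*(\mathbbm{1}_{[0,t]})$, because each of the $\binom{n}{k}$ words with $k$ creators Wick-orders to $(a_t^*)^k a_t^{n-k}$. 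Writing $N_n:={:}B_t^n{:}$, I would compute $B_t N_n=(a_t+a_t^*)N_n$ directly, commuting the single annihilator $a_t$ to the right through the $k$ creators via the CCR $[a_t,a_t^*]=\langle\mathbbm{1}_{[0,t]},\mathbbm{1}_{[0,t]}\rangle_{\mathcal{H}_{\mathbb{C}}}=t$, so that $[a_t,(a_t^*)^k]=kt\,(a_t^*)^{k-1}$. Pascal's identity collapses the pure creation/annihilation parts into $N_{n+1}$, while the commutator terms assemble, through $k\binom{n}{k}=n\binom{n-1}{k-1}$, into $nt\,N_{n-1}$. This yields exactly $B_t N_n=N_{n+1}+nt\,N_{n-1}$, the defining three-term recurrence of the parametrized Hermite polynomials $H_n(\cdot\,;\sigma^2)$ with $\sigma^2=t$; since $N_0=1=H_0$ and $N_1=B_t=H_1(B_t;t)$, induction gives $N_n=H_n(B_t;t)$.

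For the second equality I would pass to Hudson--Parthasarathy quantum stochastic calculus. The crucial observation is that the quantum It\^o table leaves $da_s\,da_s^*=ds$ as the only nonvanishing second-order product among $da_s,da_s^*$, whence $dB_s\,dB_s=(da_s+da_s^*)(da_s+da_s^*)=da_s\,da_s^*=ds$, all other products being zero. Thus $B_t$ obeys the same scalar It\^o rule $dB_t\,dB_t=dt$ as a classical one-dimensional Brownian motion, and the iterated integral $\int_0^t\cdots\int_0^t dB_{t_1}\cdots dB_{t_n}$ --- interpreted, in parallel with the full multiple integral $I_n(\mathbbm{1}_{[0,t]}^{\otimes n})$ of Proposition~\ref{hermite}, as the non-time-ordered integral --- can be treated as in the classical computation. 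Concretely, writing $C_n(t)$ for this iterated integral, I would apply the quantum It\^o formula to identify $dC_n$ and show that $C_n$ satisfies the same recurrence $B_t C_n=C_{n+1}+nt\,C_{n-1}$, with $C_0=1$ and $C_1(t)=\int_0^t dB_s=B_t$; the induction already carried out then gives $C_n(t)=H_n(B_t;t)={:}B_t^n{:}$. This is the noncommutative mirror of the identity $\int h(t_1)\cdots h(t_n)\,W(dt_1)\cdots W(dt_n)=H_n(W(h);\|h\|^2)$ of Proposition~\ref{hermite} with $h=\mathbbm{1}_{[0,t]}$.

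The \emph{main obstacle} will be the analytic bookkeeping of the quantum stochastic integral rather than the algebra. Because $a_t,a_t^*$ are unbounded, I would fix a dense invariant domain --- the finite-particle vectors, or the exponential (coherent) vectors on which $a,a^*$ and the Hudson--Parthasarathy integrals are explicitly defined --- and verify every identity as an operator identity on that domain. I must also be careful about operator ordering in the recurrence: $B_t$ is multiplied on the left, and the integrands in the iterated integral must be placed consistently so that the quantum It\^o corrections (which generate the $nt\,C_{n-1}$ term through the $ds$ contractions) carry the right combinatorial weight. Finally, the normalization must be pinned down explicitly: the convention that $\int_0^t\cdots\int_0^t$ denotes the full (non-time-ordered) multiple integral is what converts the classical simplex value $\tfrac{1}{n!}H_n$ into $H_n$ and makes the normalization agree with $H_n(B_t;t)$. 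Once the integral is secured on the chosen domain and $dB_s\,dB_s=ds$ is in hand, the remaining steps reduce to the induction above.
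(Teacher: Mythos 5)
Your proposal is correct. The first equality is handled essentially as in the paper: both arguments establish the three-term recurrence $B_t\,{:}B_t^n{:}={:}B_t^{n+1}{:}+nt\,{:}B_t^{n-1}{:}$ from the CCR and match it against the recurrence of the parametrized Hermite polynomials; you do this through the closed form ${:}B_t^n{:}=\sum_{k}\binom{n}{k}(a_t^*)^k a_t^{n-k}$ and Pascal's identity, while the paper runs an inductive commutator computation --- a cosmetic difference only. For the second equality you take a genuinely different route. Writing $C_n$ for the $n$-fold iterated integral, the paper never touches the quantum It\^o table: it evaluates $C_n$ between exponential vectors using the first fundamental formula of Hudson--Parthasarathy calculus, together with $a_t\mathcal{E}(f)=\bigl(\int_0^t f(s)\,ds\bigr)\mathcal{E}(f)$, and obtains the two-term recursion $C_n=C_{n-1}a_t+a_t^*C_{n-1}$, which is exactly the defining recursion of Wick ordering; no It\^o correction ever appears and the identification $C_n={:}B_t^n{:}$ is immediate. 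You instead invoke the It\^o product formula with $dB\,dB=dt$ to show that $C_n$ obeys the same three-term recurrence $B_tC_n=C_{n+1}+ntC_{n-1}$ as in your first part. This induction does close: with $dC_n=nC_{n-1}\,dB$, the product formula gives $B_tC_n=\int_0^t\bigl(nB_sC_{n-1}+C_n\bigr)\,dB_s+n\int_0^t C_{n-1}\,ds$, and the hypothesis at level $n-1$ turns the integrand into $(n+1)C_n+n(n-1)sC_{n-2}$, which integrates to exactly $C_{n+1}+ntC_{n-1}$. Your route mirrors the classical product-formula proof and makes the origin of the $nt$ term transparent, but it costs more machinery: the It\^o table and product formula must be justified on a dense domain, the left-integral ordering must be fixed (harmless here, since Bose increments commute with the past algebra), and the identity $dC_n=nC_{n-1}\,dB$, i.e.\ the cube-versus-simplex normalization, must be proved --- precisely the obstacles you flag. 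The paper's exponential-vector computation buys economy: it uses only the definition of the HP integral as a quadratic form on exponential vectors, at the price of being tailored to this specific family of integrands.
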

\begin{proof}
First, we will prove the first equality. It suffices to show the recurrence relation $B_t :B_t^n: =:B_t^{n+1}: + nt:B_t^{n-1}:$. For $n=1$,
\begin{gather*}
    B_t \cdot B_t= a^*_t a^*_t + a^*_t a_t + a_t a^*_t +a_t a_t,\\
    : B_t^2 : = a^*_t a^*_t +2 a^*_t a_t + a_t a_t.
\end{gather*}
Therefore,
$$
B_t :B_t: = :B_t^2: + 1\cdot t :B_t^0:.
$$
Assuming the recurrence holds up to $n=k-1$, we have
\begin{align*}
    B_t :B_t^{k-1}:&= :B_t^k: + kt:B_t^{k-2}:,\\
        B_t :B_t^{k-1}: - :B_t^k: &= (a^*_t :B_t^{k-1}: + a_t :B_t^{k-1}:) - (a^*_t :B_t^{k-1}: + :B_t^{k-1}:a_t)\\
        &=a_t :B_t^{k-1}: - :B_t^{k-1}:a_t
\end{align*}
for $n=k\ge 2$.
Thus,
$$
    a_t :B_t^{k-1}:-:B_t^{k-1}:a_t=kt:B_t^{k-2}:.
$$
Consequently,
 \begin{align*}
        B_t :B_t^k: - :B_t^{k+1}: &= a_t:B_t^k:-:B_t^k:a_t\\
        &= a_t a^*_t :B_t^{k-1}:+a_t:B_t^{k-1}:a_t -:B_t^k:a_t\\
        &= a^*_t a_t :B_t^{k-1}: + t :B_t^{k-1}: + :B_t^{k-1}:a_ta_t + (k-1)t:B_t^{k-2}:a_t \\
        &\quad -a^*_t:B_t^{k-1}:a_t-:B_t^{k-1}:a_ta_t\\
        &= (k-1)t:B_t^{k-2}:a_t+a^*_t(k-1)t:B_t^{k-2}:+t:B_t^{k-1}:\\
        &= (k-1)t:B_t^{k-1}:+t:B_t^{k-1}:\\
        &= kt:B_t^{k-1}:.
    \end{align*}
Next, we will prove the second equality. Let $f, g\in\mathcal{H}_{\mathbb{C}}$. Consider the exponential vectors
$$
\mathcal{E}(f) = \sum^\infty_{n=0} \frac{1}{n!} f^{\otimes n}
$$
and $\mathcal{E}(g)$. The set $\{ \mathcal{E}(f) \mid f \in \mathcal{H}_{\mathbb{C}} \}$ is also total in $\Gamma_s $. We have
\begin{align*}
        &\langle \mathcal{E}(g), \int_0^t \cdots\int_0^t \ dB_{t_1} \cdots dB_{t_n} \mathcal{E}(f) \rangle_{\Gamma_s}\\
        &= \langle \mathcal{E}(g), \int_0^t \left( \int_0^t \cdots\int_0^t \ dB_{t_1} \cdots dB_{t_{n-1}} \right) dB_{t_n} \mathcal{E}(f) \rangle_{\Gamma_s}\\
        &= \int_0^t \langle \mathcal{E}(g), \int_0^t \cdots\int_0^t \ dB_{t_1} \cdots dB_{t_{n-1}} f(t_n)\mathcal{E}(f) \rangle_{\Gamma_s}\\
        &\quad+ \langle g(t_n)\mathcal{E}(g), \int_0^t \cdots\int_0^t \ dB_{t_1} \cdots dB_{t_{n-1}} \mathcal{E}(f) \rangle_{\Gamma_s}\ dt_n\\
        &= \langle \mathcal{E}(g), \left(\int_0^t \cdots\int_0^t \ dB_{t_1} \cdots dB_{t_{n-1}}a_t + a^*_t\int_0^t \cdots\int_0^t \ dB_{t_1} \cdots dB_{t_{n-1}} \right)\mathcal{E}(f) \rangle_{\Gamma_s}.
\end{align*}
Using a density argument, by induction, we have the result:
$$
    \int_0^t \cdots\int_0^t \ dB_{t_1} \cdots dB_{t_{n}} = :B_t^n:.
    $$
\end{proof}
Up to this point, we have confirmed that Malliavin calculus, particularly the structure behind the chaos expansion, is intricately linked with non-commutative probability theory and the creation and annihilation operators of a boson. This observation leads to two questions:
\begin{itemize}
    \item Is there a noncommutative space, possessing an algebraic structure similar to (but distinct from) that of Brownian motion, which also admits the chaos expansion?
    \item If such a space exists, can we construct Malliavin calculus within the space?
\end{itemize}
The first question has been addressed by \cite{BSW1}, which shows that the Clifford algebra satisfies these conditions. Section~\ref{subsection_Mcf} will give the affirmative resolution of the second question. Additionally, for prior research on the derivation operator on the Clifford algebra, see \cite{AMN21}.
% 3
\section{Stochastic Calculus on the Clifford Algebra}\label{section_CAR algebra}
In this section, we will introduce the necessary contents derived from \cite{BSW1}.
Again, let $\mathcal{H}_{\mathbb{C}}$ be the complexification of a general real separable Hilbert space $\mathcal{H}$. Let $n\in\mathbb{N}$.
$\wedge$ represents the anti-symmetric tensor product, defined by 
$$
z_1 \wedge \cdots \wedge z_n = \frac{1}{n!} \sum_{\sigma\in S_n} \mathrm{sgn}(\sigma) z_{\sigma(1)} \otimes \cdots \otimes z_{\sigma(n)}
$$
for $z_1,\dots,z_n \in\mathcal{H}_{\mathbb{C}}$.
We define $\Lambda_n(\mathcal{H}_{\mathbb{C}})$ to be the closed subspace of $\mathcal{H}_{\mathbb{C}}^{\otimes n}$ generated by the set $\{ z_1 \wedge \cdots \wedge z_n \mid z_1,\dots,z_n \in \mathcal{H}_\mathbb{C} \}$.
Note that the inner product on $\Lambda_n(\mathcal{H}_{\mathbb{C}})$ is given by 
$$
\langle  z_1\wedge \cdots \wedge z_n , 
 z'_1 \wedge \cdots \wedge z'_n\rangle_{\Lambda_n(\mathcal{H}_{\mathbb{C}})}=\mathrm{det}\langle z_i, z'_j\rangle_{\mathcal{H}_{\mathbb{C}}},
$$
rather than $1/n\cdot\mathrm{det}\langle z_i, z'_j \rangle_{\mathcal{H}_{\mathbb{C}}}$.
For $f \in \Lambda_n(\mathcal{H}_{\mathbb{C}})$ and $ g\in \Lambda_m(\mathcal{H}_{\mathbb{C}})$, $f \wedge g$ is also used to denote the anti-symmetrization of $f\otimes g$. 
Let $\Gamma_a = \bigoplus^\infty_{n=0} \Lambda_n(\mathcal{H}_{\mathbb{C}}) $ denote the anti-symmetric Fock space over $\mathcal{H}_{\mathbb{C}}$, where $\Lambda_0(\mathcal{H}_{\mathbb{C}})=\mathbb{C}$.
The creation and annihilation operators $b^*(z), b(z)$ are bounded and mutually adjoint operators, whose operator norms satisfy $\| b(z) \| =\| b^*(z) \| =\|z\|_{\mathcal{H}_{\mathbb{C}}}$. For $ w = z_1 \wedge \cdots \wedge z_n$, the action of these operators is given by:
\begin{gather*}
    b^*(z) w =  z \wedge w,\\
    b(z)z_1 \wedge \cdots \wedge z_n=\sum_{i=1}^n (-1)^{i-1} \langle z, z_i \rangle_{\mathcal{H}_{\mathbb{C}}} z_1 \wedge \overset{i}{\Check{\cdots}} \wedge z_n.
\end{gather*}
The operators $b^*, b$ satisfy the canonical anti-commutation relations (CAR):
\begin{align*}
    \{b(z), b^*(z')\}&= \langle z, z' \rangle_{\mathcal{H}_{\mathbb{C}}},\\
    \{b(z), b(z')\}&= \{b^*(z), b^*(z')\}=0
\end{align*}
for $z, z' \in \mathcal{H}_{\mathbb{C}}$. We define $\Psi(z) \coloneq b^*(z) + b(J(z))$. The operator $\Psi$ also satisfies the CAR:
$$
\{ \Psi(z), \Psi(z') \} = \Psi(z)\Psi(z')+\Psi(z')\Psi(z)=2\langle J(z'), z \rangle_{\mathcal{H}_{\mathbb{C}}}
$$
for $z, z' \in \mathcal{H}_{\mathbb{C}}.$ 
Let $\mathscr{C}$ be the $W^*$-algebra generated by $\{ \Psi(z) | z\in\mathcal{H}_{\mathbb{C}} \}$. Let $\Omega \in \Gamma_a$ denote the vacuum vector. We define $m(\cdot)=\langle \Omega, \cdot \Omega \rangle_{\Gamma_a}$. This becomes a faithful and central state, making $(\Gamma_a, \mathscr{C}, m)$ a regular probability gauge space \cite{Seg53,Seg56b,Gro72}.
For $1\le p < \infty$, $L^p(\mathscr{C})$ is defined as the completion of $\mathscr{C}$ with respect to the norm  $\| u \|_{L^p(\mathscr{C})} =m(|u|^p)^{\frac{1}{p}}=\langle \Omega, (u^*u)^{\frac{p}{2}}\Omega \rangle^{\frac{1}{p}}_{\Gamma_a}$. $L^\infty(\mathscr{C})$ represents $\mathscr{C}$ equipped with the operator norm. Let $\mathscr{B}$ be a $W^*$-subalgebra of $\mathscr{C}$. Regarding the properties of $L^p(\mathscr{C})$, we list the following:
\begin{enumerate}
    \item $L^p(\mathscr{C}) \supset L^q(\mathscr{C})$ and $\| \cdot \|_{L^p(\mathscr{C})} \le \| \cdot \|_{L^q(\mathscr{C})}$ for $1\le p \le q \le \infty$;
    \item $L^{p'}(\mathscr{C})$ is the dual of $L^p(\mathscr{C})$ for $1\le p < \infty,\ p'= \frac{p}{p-1}\ \cite{Seg53,Kun58,Yea75}$;
    \item $L^p(\mathscr{B})$ is a closed subspace of $L^p(\mathscr{C}).$
\end{enumerate}
For any $1\le p\le \infty$, the conditional expectation $m(u|\mathscr{B})$ of $u\in L^p(\mathscr{C})$ with respect to $\mathscr{B}$ is defined. $m(\cdot|\mathscr{B})$ satisfies the following properties \cite{Seg53,Kun58,Yea75,Wil74,Ume54,Ume56}:
\begin{enumerate}
    \item $m\big( m(u|\mathscr{B}) v \big) =m(uv)$ for $u\in L^p(\mathscr{C}), v\in L^{p'}(\mathscr{B})$ with $p' = \frac{p}{p-1}$;
    \item $m(\cdot | \mathscr{B})$ is a contraction from $L^p(\mathscr{C})$ onto $L^p(\mathscr{B})$ for all $1 \le p \le \infty$;
    \item $m(\cdot | \mathscr{B})$ is positivity preserving;
    \item $m(vu|\mathscr{B})=v\cdot m(u|\mathscr{B})$ for all $u\in L^p(\mathscr{C}), v\in L^{p'}(\mathscr{B})$;
    \item if $\mathscr{B}_1 \subset \mathscr{B}_2,$ then $m\big( m(\cdot |\mathscr{B}_2) | \mathscr{B}_1 \big)= m(\cdot | \mathscr{B}_1).$
\end{enumerate}
The map $u \mapsto u\Omega$ from $\mathscr{C}$ into $\Gamma_a$ extends to a unitary operator $D:L^2(\mathscr{C}) \to \Gamma_a$\ \cite{Seg56b,Gro72}.
Let $\mathcal{H}=L^2(\mathbb{R}_+)$. The complexification of $L^2(\mathbb{R}_+)$ is denoted by $L^2(\mathbb{R}_+)$ as well. We use $\Lambda_n$ for $\Lambda_n(L^2(\mathbb{R}_+))$, which consists of all anti-symmetric elements of $L^2(\mathbb{R}_+^n)$. Note that the inner product on $\Lambda_n$ is given by 
$$
\langle f, g \rangle_{\Lambda_n} = n! \int \overline{f(x_1, \dots,x_n)} g(x_1, \dots,x_n)\ d{\bf x}.
$$
Let $\mathscr{C}_t$ be the $W^*$-subalgebra of $\mathscr{C}$ generated by $\{ \Psi(u) \mid u\in L^2(\mathbb{R}_+), \supp(u)\subset [0, t] \}$.
\begin{dfn}[{\cite[Def.~2.1]{BSW1}}]
    For $1\le p\le \infty$, an $L^p$-martingale adapted to the family $\{ \mathscr{C}_t\}$ is a collection $\{X_t \mid t \in \mathbb{R}_+ \}$ of operators on $\Gamma_a$ such that $X_t \in L^p(\mathscr{C}_t)$ for $t\in \mathbb{R}_+,$ and $m( X_t | \mathscr{C}_s) = X_s$ for any $0 \le s \le t.$
\end{dfn}
Let $u_1, \dots, u_n \in L^2_{loc}(\mathbb{R}_+), :\Psi(u_1\mathbbm{1}_{[0, t]})\cdots \Psi(u_n\mathbbm{1}_{[0, t]}) :$ denotes the Wick-ordered product.
\begin{thm}[{\cite[Thm~2.2]{BSW1}}]
    $\{ :\Psi(u_1\mathbbm{1}_{[0, t]})\cdots \Psi(u_n\mathbbm{1}_{[0, t]}) : \mid t \in \mathbb{R}_+ \}$ is an $L^\infty$-martingale adapted to the family $\{ \mathscr{C}_t \}.$
\end{thm}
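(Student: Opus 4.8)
The plan is to check the two defining conditions of an $L^\infty$-martingale — adaptedness and the projection identity $m(X_t\mid\mathscr{C}_s)=X_s$ for $0\le s\le t$ — and to carry out the second one on the Fock space $\Gamma_a$ by transporting everything through the unitary $D:L^2(\mathscr{C})\to\Gamma_a$, so that the computation reduces to a statement about anti-symmetric tensors. Throughout write $X_t \coloneqq\ :\Psi(u_1\mathbbm{1}_{[0,t]})\cdots\Psi(u_n\mathbbm{1}_{[0,t]}):$. Adaptedness is immediate: each $u_i\mathbbm{1}_{[0,t]}$ has support in $[0,t]$, so $\Psi(u_i\mathbbm{1}_{[0,t]})\in\mathscr{C}_t$, and since $b(z),b^*(z)$ are bounded with norm $\|z\|_{\mathcal{H}_{\mathbb{C}}}$, the Wick-ordered product is a bounded polynomial in these generators; hence $X_t\in\mathscr{C}_t=L^\infty(\mathscr{C}_t)$.

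Next I would identify $D(X_t)=X_t\Omega$ explicitly. Expanding each factor via $\Psi=b^*+b\circ J$ and normal-ordering, every monomial other than the pure-creation one carries an annihilation operator on the right; since $b(z)\Omega=0$, applying the Wick product to the vacuum leaves only $b^*(u_1\mathbbm{1}_{[0,t]})\cdots b^*(u_n\mathbbm{1}_{[0,t]})\Omega$. Using $b^*(z)w=z\wedge w$ repeatedly gives
$$
D(X_t)=u_1\mathbbm{1}_{[0,t]}\wedge\cdots\wedge u_n\mathbbm{1}_{[0,t]}\in\Lambda_n,
$$
and likewise $D(X_s)=u_1\mathbbm{1}_{[0,s]}\wedge\cdots\wedge u_n\mathbbm{1}_{[0,s]}$.

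The main step is to describe $m(\cdot\mid\mathscr{C}_s)$ under $D$. Because $m$ is a faithful central (tracial) state, its restriction of $m(\cdot\mid\mathscr{C}_s)$ to $L^2(\mathscr{C})$ is the orthogonal projection onto $L^2(\mathscr{C}_s)$; applying the generators of $\mathscr{C}_s$ to $\Omega$ shows $D\big(L^2(\mathscr{C}_s)\big)=\bigoplus_{k}\Lambda_k\big(L^2([0,s])\big)$. Hence $D$ intertwines $m(\cdot\mid\mathscr{C}_s)$ with the orthogonal projection $\Pi_s$ onto this subspace, which on each $\Lambda_n$ acts as the $n$-fold tensor power of multiplication by $\mathbbm{1}_{[0,s]}$. (If one prefers to avoid this identification, the same conclusion follows by verifying $m\big((X_t-X_s)v\big)=0$ for $v$ ranging over the total set of products $\Psi(w_1)\cdots\Psi(w_k)$ with $\supp w_j\subset[0,s]$, using $m(uv)=\langle D(u^*),D(v)\rangle_{\Gamma_a}$ and the orthogonality computed below.)

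Finally I would compute $\Pi_s D(X_t)$. Decomposing $u_i\mathbbm{1}_{[0,t]}=u_i\mathbbm{1}_{[0,s]}+u_i\mathbbm{1}_{(s,t]}$ and expanding the wedge multilinearly, every resulting term containing at least one factor $u_j\mathbbm{1}_{(s,t]}$, which is orthogonal to $L^2([0,s])$, is annihilated by $\Pi_s$; the only surviving term is $u_1\mathbbm{1}_{[0,s]}\wedge\cdots\wedge u_n\mathbbm{1}_{[0,s]}=D(X_s)$. Since $D$ is injective and $X_s\in L^\infty(\mathscr{C}_s)$, this yields $m(X_t\mid\mathscr{C}_s)=X_s$, completing the proof. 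The point requiring the most care is the identification in the third paragraph — that the conditional expectation coincides with the $L^2$ orthogonal projection and that $D$ sends $L^2(\mathscr{C}_s)$ exactly onto the Fock space over $L^2([0,s])$ — which rests on the gauge-space structure of $(\Gamma_a,\mathscr{C},m)$; the remaining steps are then routine multilinear algebra.
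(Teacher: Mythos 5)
Your proof is correct, but there is nothing in the paper to compare it against: the statement is quoted verbatim from \cite{BSW1} as background (it is \cite[Thm~2.2]{BSW1}), and the paper supplies no proof of its own. Judged on its merits, your argument is the standard one and is fully consistent with the machinery the paper does develop. The pivotal step --- that under the unitary $D$ the conditional expectation $m(\cdot\mid\mathscr{C}_s)$ becomes the second quantization $\Gamma(\mathbbm{1}_{[0,s]})$, i.e.\ the orthogonal projection onto $\bigoplus_k \Lambda_k\big(L^2([0,s])\big)$ --- is precisely what the paper later records as Lemma~\ref{projection} (for a general Borel set $A$) and also invokes, without proof, in the remark $m\big(\int w_n :\psi: d{\bf x} \mid \mathscr{C}_t\big) = \int w_n \mathbbm{1}_{[0,t]^n} :\psi: d{\bf x}$ preceding the martingale representation theorem. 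Granting that identification, your computation is airtight: $D(X_t)=u_1\mathbbm{1}_{[0,t]}\wedge\cdots\wedge u_n\mathbbm{1}_{[0,t]}$ by the defining property of the Wick product acting on $\Omega$, and the multilinear expansion $u_i\mathbbm{1}_{[0,t]}=u_i\mathbbm{1}_{[0,s]}+u_i\mathbbm{1}_{(s,t]}$ kills every cross term under $\Gamma(\mathbbm{1}_{[0,s]})$, leaving $D(X_s)$. The two points you flag as delicate are genuine but standard consequences of the regular gauge-space structure $(\Gamma_a,\mathscr{C},m)$ cited in the paper: the tracial property of $m$ makes the conditional expectation the $L^2$-orthogonal projection onto $L^2(\mathscr{C}_s)$, and cyclicity of $\Omega$ for the subalgebra $\mathscr{C}_s$ gives $D\big(L^2(\mathscr{C}_s)\big)=\bigoplus_k\Lambda_k\big(L^2([0,s])\big)$. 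One small tightening for the adaptedness step: rather than ``bounded polynomial in the generators,'' it is cleanest to note that the recursive definition of Wick ordering exhibits $:\Psi(u_1\mathbbm{1}_{[0,t]})\cdots\Psi(u_n\mathbbm{1}_{[0,t]}):$ as an element of the $*$-algebra generated by the bounded operators $\Psi(u_i\mathbbm{1}_{[0,t]})$, hence of the $W^*$-algebra $\mathscr{C}_t$.
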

$\int w_n (x_1, \dots, x_n) :\psi(x_1) \cdots \psi(x_n): dx_1 \cdots dx_n$ denotes the element in $L^2(\mathscr{C})$ such that 
$$
\int w_n (x_1, \dots, x_n) :\psi(x_1) \cdots \psi(x_n): dx_1 \cdots dx_n \Omega = w_n 
$$
for $w_n \in \Lambda_n$\ \cite{Hep69}. We abbreviate it as  $\int w_n :\psi: d{\bf x},\ J_n(w_n(x_1, \dots, x_n))$, and $ J_n(w_n)$. Note that $J_n(w_n(x_1, \dots, x_n))^* = J_n(\overline{w_n(x_n, \dots, x_1)}),\ m(\int w_n :\psi: d{\bf x} | \mathscr{C}_t) = \int w_n \mathbbm{1}_{[0, t]^n} :\psi: d{\bf x}$.
\begin{thm}[{\cite[Thm~2.4]{BSW1}}]
    Let $\{ X_t \mid t\in \mathbb{R}_+ \}$ be an $L^2$-martingale adapted to the family $\{ \mathscr{C}_t \}.$ Then there exist $w_0 \in \mathbb{C}$ and the anti-symmetric functions $w_n \in L^2_{loc}(\mathbb{R}^n_+)$ such that 
    $$
    X_t = w_0 + \sum^\infty_{n=1} \int w_n \mathbbm{1}_{[0, t]^n} :\psi: d{\bf x} .
    $$
\end{thm}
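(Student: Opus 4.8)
The plan is to transport the whole problem into the anti-symmetric Fock space $\Gamma_a$ by means of the unitary $D : L^2(\mathscr{C}) \to \Gamma_a$, $u \mapsto u\Omega$. Since $\Gamma_a = \bigoplus_{n=0}^\infty \Lambda_n$ is an orthogonal direct sum and the Wick integral satisfies $\int w_n :\psi: d{\bf x}\,\Omega = w_n$, every $X_t \in L^2(\mathscr{C})$ acquires a canonical chaos expansion $X_t = \sum_{n=0}^\infty J_n(f_n^{(t)})$ with $f_n^{(t)} \in \Lambda_n$, converging in $L^2(\mathscr{C})$. The idea is then to reinterpret the martingale condition as a compatibility condition among the components $f_n^{(t)}$ at different times, and to glue them into single functions $w_n$.

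First I would fix $t$ and record two facts about the components $f_n^{(t)}$. Because $X_t \in L^2(\mathscr{C}_t)$ we have $m(X_t \mid \mathscr{C}_t) = X_t$; applying the conditional expectation termwise — legitimate since $m(\cdot\mid\mathscr{C}_t)$ is an $L^2$-contraction, hence continuous — and using the identity $m(J_n(w_n)\mid\mathscr{C}_t) = J_n(w_n\mathbbm{1}_{[0,t]^n})$ recorded just before the theorem, uniqueness of the chaos expansion forces $f_n^{(t)}\mathbbm{1}_{[0,t]^n} = f_n^{(t)}$, i.e. $f_n^{(t)}$ is supported in $[0,t]^n$. The constant $w_0 := f_0^{(t)}$ is left untouched by conditional expectation.

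Next I would feed the genuine martingale identity $m(X_t \mid \mathscr{C}_s) = X_s$ for $s \le t$ through the same machinery. The left-hand side becomes $\sum_n J_n(f_n^{(t)}\mathbbm{1}_{[0,s]^n})$ and the right-hand side is $\sum_n J_n(f_n^{(s)})$, so uniqueness of the chaos expansion (injectivity of $D$ together with orthogonality of the $\Lambda_n$) yields the key compatibility relation $f_n^{(s)} = f_n^{(t)}\mathbbm{1}_{[0,s]^n}$ for every $n$ and every $s \le t$. This says precisely that the anti-symmetric functions $f_n^{(t)}$ agree on overlapping cubes, so for each $n$ they are the restrictions of one common function: I would set $w_n(\mathbf{x}) := f_n^{(t)}(\mathbf{x})$ for any $t$ with $\mathbf{x} \in [0,t]^n$, the compatibility relation being exactly the well-definedness a.e. Each $w_n$ is anti-symmetric and satisfies $w_n\mathbbm{1}_{[0,t]^n} = f_n^{(t)} \in L^2([0,t]^n)$, so $w_n \in L^2_{loc}(\mathbb{R}_+^n)$; substituting back gives $X_t = w_0 + \sum_{n=1}^\infty \int w_n \mathbbm{1}_{[0,t]^n} :\psi: d{\bf x}$, with convergence in $L^2(\mathscr{C})$ for each fixed $t$.

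The main obstacle I anticipate is the bookkeeping in the gluing step rather than any new estimate: one must check that the pointwise-a.e. definition of $w_n$ is independent of the chosen $t$ on a common set of full measure, that measurability and anti-symmetry persist in the limit, and that \emph{local} — but not necessarily global — square integrability is the sharp conclusion. That sharpness is visible already in the process $\Psi_t = \Psi(\mathbbm{1}_{[0,t]})$, whose single nonzero kernel is $w_1 \equiv 1 \in L^2_{loc}(\mathbb{R}_+)\setminus L^2(\mathbb{R}_+)$. All the analytic content — the chaos expansion, the conditional-expectation formula on Wick products, and the unitarity of $D$ — is supplied by the results quoted above, so the argument is essentially one of consistency and gluing.
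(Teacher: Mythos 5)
Your proof is correct; note that the paper itself does not prove this statement but quotes it from \cite{BSW1}, and your argument --- transferring everything to $\Gamma_a$ via the unitary $D$, reading adaptedness and the martingale identity component-wise through $m\big(J_n(w_n)\mid \mathscr{C}_t\big)=J_n(w_n\mathbbm{1}_{[0,t]^n})$ together with uniqueness of the chaos components, and gluing the compatible kernels --- is essentially the standard proof given in that reference. The only step to make fully explicit is the gluing along a countable exhaustion (say $t_k=k$), so that the a.e.\ compatibility relations $f_n^{(s)}=f_n^{(t)}\mathbbm{1}_{[0,s]^n}$ hold simultaneously off a single null set; you already identify this as routine bookkeeping, and it is.
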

Here, we introduce the It\^o-Clifford integral, which is an It\^o integral associated with the fermion field.
\begin{dfn}[{\cite[Def.~3.1]{BSW1}}]
    Let $0 \le t_0 \le t.$ An $L^p$-adapted process on $[t_0, t]$ is a map $f: [t_0, t] \to L^p(\mathscr{C})$ such that $f(s) \in L^p(\mathscr{C}_s) $ for all $ s \in [t_0, t].$
\end{dfn}
\begin{dfn}[{\cite[Def.~3.2]{BSW1}}]
    An $L^1$-adapted process $h$ on $[t_0,t]$ is said to be simple if it can be expressed as
    $$
    h = \sum_{k=1}^n h_{k-1} \mathbbm{1}_{[ t_{k-1}, t_k )}
    $$
    on $[ t_0, t )$, for some $t_0 \le t_1 \le \cdots \le t_n = t$ and $h_k \in L^1(\mathscr{C}_{t_k})$, $0\le k \le n-1.$
\end{dfn}
By an abuse of terminology, we will also consider a function $f \in L^2([t_0, t];L^2(\mathscr{C}))$ as an adapted process if $f(s)\in L^2(\mathscr{C}_s)$ for $\mathrm{a.e.}\ s\in [t_0, t]$. We write $\mathfrak{H}[t_0, t]$ for the set of these adapted processes in $L^2([t_0, t];L^2(\mathscr{C}))$.
\begin{thm}[{\cite[Thm~3.10, Def.~3.11]{BSW1}}]
    For $f \in \mathfrak{H}[t_0, t],$ the It\^o-Clifford integral of $f$ exists.
\end{thm}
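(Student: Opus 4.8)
The plan is to mirror the classical It\^o construction: define the integral on simple processes by a Riemann-type sum, prove an It\^o-Clifford isometry on simple processes, establish density of simple processes in $\mathfrak{H}[t_0,t]$, and then extend by continuity using completeness of $L^2(\mathscr{C})$. For a simple adapted process $h = \sum_{k=1}^n h_{k-1}\mathbbm{1}_{[t_{k-1},t_k)}$ with $h_{k-1}\in L^2(\mathscr{C}_{t_{k-1}})$, I would set
$$
\int_{t_0}^t h \, d\Psi \coloneqq \sum_{k=1}^n h_{k-1}(\Psi_{t_k} - \Psi_{t_{k-1}}),
$$
where $\Psi_t = \Psi(\mathbbm{1}_{[0,t]})$, and abbreviate the increment by $\Delta_k \coloneqq \Psi_{t_k}-\Psi_{t_{k-1}} = \Psi(\mathbbm{1}_{[t_{k-1},t_k)})$. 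The central goal is the isometry
$$
\Bigl\| \int_{t_0}^t h\, d\Psi \Bigr\|_{L^2(\mathscr{C})}^2 = \int_{t_0}^t \|h(s)\|_{L^2(\mathscr{C})}^2\, ds,
$$
which, after expanding the left-hand side as $\sum_{k,j} m(\Delta_k h_{k-1}^* h_{j-1}\Delta_j)$ (using $\Psi_t^* = \Psi_t$ for real indicators), I would establish by treating diagonal and off-diagonal terms separately.

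For the off-diagonal terms, say $k<j$, the operator $\Delta_k h_{k-1}^* h_{j-1}$ lies in $\mathscr{C}_{t_{j-1}}$, so the trace property of $m$ together with $m(vu\,|\,\mathscr{B}) = v\, m(u\,|\,\mathscr{B})$ and the martingale identity $m(\Delta_j\,|\,\mathscr{C}_{t_{j-1}})=0$ force the term to vanish; the case $k>j$ follows symmetrically after a cyclic rearrangement under $m$. It is essential here that $m$ is central, since cyclicity lets me bring the future increment adjacent to the conditional expectation without incurring the fermionic sign corrections coming from the $\mathbb{Z}_2$-grading of the Clifford algebra. For the diagonal terms, the CAR give
$$
\Delta_k^2 = \langle J(\mathbbm{1}_{[t_{k-1},t_k)}),\, \mathbbm{1}_{[t_{k-1},t_k)}\rangle_{\mathcal{H}_{\mathbb{C}}} = t_k - t_{k-1},
$$
a scalar, so by centrality $m(\Delta_k h_{k-1}^*h_{k-1}\Delta_k) = (t_k-t_{k-1})\|h_{k-1}\|_{L^2(\mathscr{C})}^2$, and summing reproduces $\int_{t_0}^t\|h(s)\|_{L^2(\mathscr{C})}^2\, ds$.

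Next I would show that simple processes are dense in $\mathfrak{H}[t_0,t]$: given an adapted $f\in L^2([t_0,t];L^2(\mathscr{C}))$, I partition $[t_0,t]$ and approximate $f$ by step functions whose values are obtained by averaging over subintervals and projecting through $m(\,\cdot\,|\,\mathscr{C}_{t_{k-1}})$, which keeps the approximants adapted (since $\mathscr{C}_{t_{k-1}}\subset\mathscr{C}_s$ for $s\ge t_{k-1}$, the projection does not spoil the filtration condition) and converges in $L^2([t_0,t];L^2(\mathscr{C}))$ by a Lebesgue-differentiation argument. The isometry then shows that a Cauchy sequence of simple processes is sent to a Cauchy sequence in the complete space $L^2(\mathscr{C})$, so the integral extends uniquely and continuously to all of $\mathfrak{H}[t_0,t]$, which is exactly the asserted existence. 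The main obstacle is the isometry, and within it the vanishing of the cross terms in the non-commutative setting; the feature that makes this tractable is precisely the traciality of $m$, which converts the fermionic sign bookkeeping into ordinary cyclic invariance.
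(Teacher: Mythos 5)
Your proposal is correct and takes essentially the same approach as the paper: the paper imports this theorem from \cite{BSW1} and sketches exactly this construction (the integral defined on simple adapted processes, then extended as an $L^2(\mathscr{C})$-limit), and your isometry-plus-density-plus-completeness argument is the standard one underlying that citation, with the cross terms killed by traciality of $m$ together with the martingale property $m(\Psi_{t_j}-\Psi_{t_{j-1}}\,|\,\mathscr{C}_{t_{j-1}})=0$, and the diagonal terms computed from $\Psi(\mathbbm{1}_{[t_{k-1},t_k)})^2 = t_k-t_{k-1}$. The only discrepancy is notational: you write the right integral $\sum_k h_{k-1}(\Psi_{t_k}-\Psi_{t_{k-1}})$ whereas the paper's stated convention is the left integral $\sum_k (\Psi_{t_k}-\Psi_{t_{k-1}})h_{k-1}$; since $m$ is central, your isometry argument transfers verbatim.
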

Let $\int^t_{t_0} d\Psi_s f(s)$ denote the It\^o-Clifford integral, where $\Psi_s = \Psi(\mathbbm{1}_{[0,s]})$.
This is defined through the following procedure. For a simple adapted process, we define the integral as
$$
\int^t_{t_0} d\Psi_s \big( \sum_{k=1}^n h_{k-1} \mathbbm{1}_{[ t_{k-1}, t_k )} \big) \coloneq \sum _{k=1}^n (\Psi_{t_k} - \Psi_{t_{k-1}})h_{k-1}.
$$
Then the integral for more general processes is defined as the $L^2(\mathscr{C})$-limit of this expression. Note that, in contrast to the original paper, we consider the left integral.
At the end of this section, we introduce the conjugation operator $\beta$ on $L^2(\mathscr{C})$. 
\begin{dfn}[{\cite[Def.~3.13]{BSW1}}]
    $\beta: L^2(\mathscr{C}) \to L^2(\mathscr{C})$ denotes the self-adjoint unitary $D^{-1}\Gamma(-1) D,$ where $\Gamma(-1)$ is the second quantization of $-1.$
\end{dfn}
For details on the properties of $\beta$, see \cite{BSW4}.

% 4
\section{Main Results}\label{section_Main}
% 4.1
\subsection{Malliavin Calculus on the Clifford Algebra}\label{subsection_Mcf}
In this subsection, we will develop Malliavin calculus for the Clifford algebra. The result corresponding to Proposition~\ref{multiple formula} will form a crucial foundation for the subsequent developments.
\begin{prop}\label{anti-multiple formula}
Let $p, q \in \mathbb{Z}_{\ge 0}$. For every $f \in \Lambda_p,\ g \in \Lambda_q$, the following formula holds:
\begin{align}
    J_p (f) J_q (g) = \sum^{ p \wedge q }_{ r = 0 } r! \binom{p}{r} \binom{q}{r} J_{p+q-2r} ( f \hat{\wedge}_r g ), \label{anti-multiple eq.}
\end{align}
    where  $p \wedge q = \mathrm{min}(p,q)$, and $f \hat{\wedge}_r g$ denotes the anti-symmetrization of 
\begin{align}\label{arrow}
f {\wedge}_r g \coloneqq \int f(t_1, \dots, t_{p-r}, s_r, \dots, s_1 ) g(s_1, \dots, s_{r}, t_{p-r+1}, \dots, t_{p+q-2r} )\ d\bf{s}. 
\end{align}
\end{prop}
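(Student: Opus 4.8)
The plan is to prove the identity at the level of the vacuum vector and to reduce to factorized kernels. Since $D\colon L^2(\mathscr{C})\to\Gamma_a,\ X\mapsto X\Omega$ is unitary, two elements of $L^2(\mathscr{C})$ coincide as soon as they agree on $\Omega$; because $J_n(w_n)\Omega=w_n$, the asserted operator identity \eqref{anti-multiple eq.} is therefore equivalent to the vector identity
\begin{equation*}
J_p(f)\,g=\sum_{r=0}^{p\wedge q} r!\binom{p}{r}\binom{q}{r}\,(f\hat{\wedge}_r g)\qquad\text{in }\Gamma_a,
\end{equation*}
where $g=J_q(g)\Omega\in\Lambda_q$. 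Both sides are bilinear and $L^2$-continuous in $(f,g)$ (the right-hand side because each contraction $(f,g)\mapsto f\hat{\wedge}_r g$ is a bounded bilinear map $\Lambda_p\times\Lambda_q\to\Lambda_{p+q-2r}$), so it suffices to treat factorized kernels $f=h_1\wedge\cdots\wedge h_p$ and $g=k_1\wedge\cdots\wedge k_q$ with all $h_i,k_j$ bounded. For these I would use the easily checked identity $J_p(f)=\ :\Psi(h_1)\cdots\Psi(h_p):$ (both sides send $\Omega$ to $h_1\wedge\cdots\wedge h_p$), so that $J_p(f)$ is a bounded operator; then the product $J_p(f)J_q(g)$ is unambiguous, and the continuous extension in each variable is legitimate provided the other factor is kept bounded, so that multiplication remains $L^2$-continuous.

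For such factorized kernels the problem becomes the evaluation of a product of two Wick-ordered fields. The key input is the fermionic Wick theorem, proved exactly as the bosonic one quoted earlier by iterating the CAR $\{b(z),b^*(z')\}=\langle z,z'\rangle$: the product of two normal-ordered products equals the sum, over all partial matchings between the two groups of fields, of the product of the pairwise contractions times the normal-ordered product of the remaining fields, with a fermionic sign for each matching. Contractions inside a single normal-ordered product vanish, and the single cross-contraction is
\begin{equation*}
\langle\Omega,\Psi(h_i)\Psi(k_j)\Omega\rangle=\langle J(h_i),k_j\rangle=\int h_i(s)k_j(s)\,ds .
\end{equation*}
A matching that contracts exactly $r$ pairs leaves $p+q-2r$ surviving fields in normal order, i.e.\ a term in $\Lambda_{p+q-2r}$, which is the summand $J_{p+q-2r}(\,\cdot\,)$ after applying $D^{-1}$.

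It then remains to reorganize the sum and match signs. Grouping the matchings by the number $r$ of contracted pairs, there are $\binom{p}{r}$ ways to select the contracted slots in $f$, $\binom{q}{r}$ ways in $g$, and $r!$ ways to pair them, which reproduces the coefficient $r!\binom{p}{r}\binom{q}{r}$. The remaining task is to verify that the signed sum of contraction values times leftover wedge products is exactly $f\hat{\wedge}_r g$ as defined in \eqref{arrow}. Here the reversed order $s_r,\dots,s_1$ of the integration variables inside $f$ (against $s_1,\dots,s_r$ inside $g$), together with the conjugation hidden in $\langle J(h_i),k_j\rangle=\int h_i k_j$, is precisely what is needed: it converts the fermionic transposition signs accumulated while moving the $r$ annihilation operators past the intervening creation operators into the canonical antisymmetrization and integration of \eqref{arrow}. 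This sign bookkeeping — showing that the reversed-index contraction in \eqref{arrow} reproduces exactly the signs dictated by the CAR — is the main obstacle; everything else is combinatorial counting together with the already-established unitarity of $D$.

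As a sanity check I would verify the case $p=q=1$ directly, where $\Psi(h)\Psi(k)\Omega=h\wedge k+\langle J(h),k\rangle\Omega$ matches $J_2(h\wedge k)+\langle J(h),k\rangle J_0(1)$, i.e.\ \eqref{anti-multiple eq.} for $p=q=1$. The general case then follows from the matching described above, and the passage from factorized kernels to arbitrary $f\in\Lambda_p,\ g\in\Lambda_q$ is completed by the continuity and density argument of the first paragraph.
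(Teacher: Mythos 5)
Your strategy is sound and it is genuinely different from the paper's. The paper proves the case $q=1$ by hand (reducing, via density and the isometry $\|J_p(f)\|_{L^2(\mathscr{C})}=\|f\|_{\Lambda_p}$, to wedges of indicators of pairwise disjoint sets and using the CAR), and then inducts on $q$ by writing $g=g_1\wedge g_2$, applying $J_q(g_1\wedge g_2)=J_{q-1}(g_1)J_1(g_2)$, and verifying the combinatorial identity
\begin{equation*}
q(f\hat{\wedge}_r g) = \frac{r(p+q-2r+1)}{p-r+1}\,(f\hat{\wedge}_{r-1}g_1)\wedge_1 g_2 + (q-r)\,(f\hat{\wedge}_r g_1)\wedge g_2,
\end{equation*}
which is where all of its sign bookkeeping lives. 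You instead reduce to factorized kernels (your density/continuity reduction is in the same spirit as the paper's, and the identification $J_p(h_1\wedge\cdots\wedge h_p)=\,:\Psi(h_1)\cdots\Psi(h_p):$ is correct, making both factors bounded), and then invoke the operator form of the fermionic Wick theorem, grouping the partial cross-matchings by the number $r$ of contractions. This route works: your contraction value $\langle J(h_i),k_j\rangle=\int h_ik_j$ is right, the counting $r!\binom{p}{r}\binom{q}{r}$ is right, and the reversed order $s_r,\dots,s_1$ in \eqref{arrow} is precisely the nested (non-crossing) matching whose Wick sign is $+1$, with the signs of all other matchings absorbed by the antisymmetry of $f$ and $g$. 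What this buys over the paper's argument is conceptual transparency (the coefficient literally counts matchings) and no induction on $q$ at the level of the proposition itself.

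Two caveats keep your text a blueprint rather than a proof. First, the Wick theorem you need is the \emph{operator} identity expressing a product of two normal-ordered fermion products as a signed sum over partial cross-matchings times normal-ordered remainders; the only Wick-type statement quoted in the paper is the vacuum-expectation version (for bosons), so this operator statement must itself be proven, and its proof is an induction on the CAR of essentially the same weight as the paper's $q=1$ case plus induction step --- the work is relocated, not removed. Second, you explicitly defer the sign verification, calling it ``the main obstacle''; but that verification \emph{is} the mathematical content of the proposition, since the coefficient count is immediate once one knows the Wick signs cancel against permutations of the antisymmetric kernels. (A minor technical point: when extending from factorized kernels, once both $f$ and $g$ are general $L^2$ kernels neither factor is bounded, so the product is a priori only controlled in $L^1(\mathscr{C})$ via H\"older $\|XY\|_1\le\|X\|_2\|Y\|_2$; since the right-hand side of \eqref{anti-multiple eq.} lies in $L^2(\mathscr{C})$, equality in $L^1$ suffices, but this should be said.) With those two steps actually carried out, your argument would be a complete and valid alternative proof.
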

Before proceeding with the proof, we introduce some simplified notation.
 We use the notation
$$
    \int f(t_1, \dots, t_{p-r}, \overrightarrow{s_r} ) g(\overleftarrow{s_r}, t_{p-r+1}, \dots, t_{p+q-2r} )\ d\bf{s}
$$
for the right-hand side of (\ref{arrow}) and define
$$
    \overleftarrow{f}(x_1, \dots, x_p) \coloneq f(x_p, \dots, x_1).
$$
By using this notation, it is possible to write expressions such as $J_p(f)^* = J_p(\overline{\overleftarrow{f}})$.
\begin{proof}[Proof of Proposition~\ref{anti-multiple formula}]
First, we consider the case when $q=1$. Specifically, we aim to show that $J_p(f)J_1(g)=J_{p+1}(f \wedge g) + pJ_{p-1}(f\wedge_1 g)$. By using the linearity of $J_p$ and the fact that $\|J_p(f)\|_{L^2(\mathscr{C})}= \|f\|_{\Lambda_p}$, 
and by a density argument, it suffices to consider the functions
$$
    f = \hat{\mathbbm{1}}_{A_1\times \dots \times A_p},\  g = \mathbbm{1}_{A_0} \ \text{or $\mathbbm{1}_{A_p}$},
    $$
where $A_0, \dots, A_p$ are pairwise disjoint measurable sets.
When $g=\mathbbm{1}_{A_0}$, we have $f\wedge_1 g =0$, thus 
$$
    J_p(f)J_1(g) = \Psi(\mathbbm{1}_{A_1})\cdots\Psi(\mathbbm{1}_{A_p}) \cdot \Psi(\mathbbm{1}_{A_0}) = J_{p+1}(f\wedge g).
    $$
When $g=\mathbbm{1}_{A_p}$,
\begin{align}
        J_p(f)J_1(g) &= \Psi(\mathbbm{1}_{A_1})\cdots\Psi(\mathbbm{1}_{A_p}) \cdot \Psi(\mathbbm{1}_{A_p}) \nonumber\\
        &= \mu(A_p)\Psi(\mathbbm{1}_{A_1})\cdots\Psi(\mathbbm{1}_{A_{p-1}})\quad\text{($\mu$ is Lebesgue measure)} \nonumber\\
        &= \mu(A_p) J_{p-1}(\hat{\mathbbm{1}}_{A_1 \times \cdots \times A_{p-1}}) \nonumber\\
        &= \mu(A_p) \int \sum_{\sigma\in S_{p-1}} \frac{\mathrm{sgn}(\sigma)}{(p-1)!}\mathbbm{1}_{A_1\times \cdots \times A_{p-1}}(x_{\sigma(1)}, \dots, x_{\sigma(p-1)}) :\psi: d{\bf x}, \nonumber\\
        f\wedge_1 g &= \int \sum_{\sigma\in S_{p}} \frac{\mathrm{sgn}(\sigma)}{p!}\mathbbm{1}_{A_1\times \cdots \times A_{p}}(x_{\sigma(1)}, \dots, x_{\sigma(p)}) \mathbbm{1}_{A_p}(x_p) \ dx_p \nonumber\\
        &=\sum_{\sigma\in S_{p-1}} \int \frac{\mathrm{sgn}(\sigma)}{p!}\mathbbm{1}_{A_1\times \cdots \times A_{p}}(x_{\sigma(1)}, \dots, x_{\sigma(p-1)}, x_p) \mathbbm{1}_{A_p}(x_p) \ dx_p \label{4.1}\\
        &= \frac{\mu(A_p)}{p}\sum_{\sigma\in S_{p-1}} \frac{\mathrm{sgn}(\sigma)}{(p-1)!}\mathbbm{1}_{A_1\times \cdots \times A_{p-1}}(x_{\sigma(1)}, \dots, x_{\sigma(p-1)}) \nonumber\\
        &=  \frac{\mu(A_p)}{p} \hat{\mathbbm{1}}_{A_1\times \cdots \times A_{p-1}}, \nonumber
    \end{align}
where we used the fact that the integral is non-zero only when $\sigma(p)=p$ for (\ref{4.1}). Since $f\wedge g =0$, the proof for the case $q=1$ is completed.
For the general case of $q$, we use induction. Without loss of generality, let $p\ge q$. Assume that (\ref{anti-multiple eq.}) holds up to $q-1$. Using a density argument, as in the initial part, we can write $g$ as $g=g_1\wedge g_2$, where $g_1 \in \Lambda_{q-1},\ g_2 \in \Lambda_1$ such that $g_1\wedge g_2 = 0$. In this case, $J_q(g_1\wedge g_2) = J_{q-1}(g_1)J_1(g_2)$. Therefore,
    \begin{align*}
        J_p(f)J_q(g) &= J_p(f)J_{q-1}(g_1)J_1(g_2)\\
        &= \sum_{r=0}^{q-1}r! \binom{p}{r} \binom{q-1}{r}J_{p+q-1-2r}(f\hat{\wedge}_r g_1)J_1(g_2)\\
        &= \sum_{r=0}^{q-1}r! \binom{p}{r} \binom{q-1}{r}\left\{ J_{p+q-2r}((f\hat{\wedge}_r g_1)\wedge g_2) + (p+q-1-2r)J_{p+q-2r-2}((f\hat{\wedge}_r g_1)\wedge_1 g_2) \right\}\\
        &= \sum_{r=0}^{q-1}r! \binom{p}{r} \binom{q-1}{r}J_{p+q-2r}((f\hat{\wedge}_r g_1)\wedge g_2)\\
        &\quad + \sum_{r=1}^{q}(r-1)! \binom{p}{r-1} \binom{q-1}{r-1} (p+q-2r+1)J_{p+q-2r}((f\hat{\wedge}_{r-1} g_1)\wedge_1 g_2).
    \end{align*}
It remains to show that
$$q(f\hat{\wedge}_r g) = \frac{r(p+q-2r+1)}{p-r+1} (f \hat{\wedge}_{r-1} g_1)\wedge_1 g_2 + (q-r)(f\hat{\wedge}_r g_1)\wedge g_2. 
    $$
Note that
    $$
    g = \frac{1}{q}\sum_{i=1}^q (-1)^{q-i}g_1(t_1, \overset{i}{\Check{\dots}}, t_q)g_2(t_i).
    $$
    Thus,
    \begin{align}
        q(f\hat{\wedge}_r g) &= 
        \frac{1}{(p+q-2r)!}\sum_{\sigma \in S_{p+q-2r}}\sum_{i=1}^r \sgn(\sigma)(-1)^{q-i} \nonumber\\
        &\quad \times
        \int f(t_{\sigma(1)}, \dots, t_{\sigma(p-r)}, \overrightarrow{s_r})
        g_1(\overset{i}{\Check{\overleftarrow{s_r}}}, t_{\sigma(p-r+1)}, \dots, t_{\sigma(p+q-2r)})
        g_2(s_i)\ d\bf{s} \nonumber\\
        &+ \frac{1}{(p+q-2r)!}\sum_{\sigma \in S_{p+q-2r}}\sum_{i=r+1}^q \sgn(\sigma)(-1)^{q-i} \nonumber\\
        &\quad \times
        \int f(t_{\sigma(1)}, \dots, t_{\sigma(p-r)}, \overrightarrow{s_r})
        g_1(\overleftarrow{s_r}, t_{\sigma(p-r+1)}, \overset{i}{\Check{\dots}}, t_{\sigma(p+q-2r)})
        g_2(t_{\sigma(p-r+i-r)})\ d\bf{s} \nonumber\\
    &=
        \frac{(-1)^{q-r}r}{(p+q-2r)!}\sum_{\sigma \in S_{p+q-2r}}\sgn(\sigma) \nonumber\\
        &\quad \times
        \int f(t_{\sigma(1)}, \dots, t_{\sigma(p-r)}, \overrightarrow{s_r})
        g_1(\overleftarrow{s_{r-1}}, t_{\sigma(p-r+1)}, \dots, t_{\sigma(p+q-2r)})
        g_2(s_r)\ d\bf{s} \nonumber \nonumber\\
        &+\frac{q-r}{(p+q-2r)!}\sum_{\sigma \in S_{p+q-2r}}\sgn(\sigma) \nonumber\\
        &\quad \times
        \int f(t_{\sigma(1)}, \dots, t_{\sigma(p-r)}, \overrightarrow{s_r})
        g_1(\overleftarrow{s_r}, t_{\sigma(p-r+1)}, \dots, t_{\sigma(p+q-2r-1)})
        g_2(t_{\sigma(p+q-2r)})\ d\bf{s}, \label{4.1.1}
    \end{align}
    where we used the fact that the first part has anti-symmetry of $f$ and the second part has anti-symmetry about $t_1, \dots, t_{p+q-2r}$ for (\ref{4.1.1}).
    Moreover,
        \begin{align}
        (f\hat{\wedge}_{r-1} g_1)\wedge_1 g_2 &=
        \frac{1}{(p+q-2r+1)!}\sum_{\sigma \in S_{p+q-2r+1}}\sgn(\sigma) \nonumber\\
        &\quad\times \int f(t_{\sigma(1)}, \dots, t_{\sigma(p-r+1)}, \overrightarrow{s_{r-1}})
        g_1(\overleftarrow{s_{r-1}}, t_{\sigma(p-r+2)}, \dots, t_{\sigma(p+q-2r+1)})\nonumber\\
        &\qquad \times g_2(t_{p+q-2r+1})\ d{\bf s} dt_{p+q-2r+1}, \label{2.5}\\
        (f\hat{\wedge}_{r} g_1)\wedge g_2 &=
        \frac{1}{(p+q-2r+1)!}\sum_{\sigma \in S_{p+q-2r}}\sgn(\sigma)\nonumber\\
        &\quad\times \int f(t_{\sigma(1)}, \dots, t_{\sigma(p-r)}, \overrightarrow{s_{r}})
        g_1(\overleftarrow{s_{r}}, t_{\sigma(p-r+1)}, \dots, t_{\sigma(p+q-2r-1)})
        g_2(t_{\sigma(p+q-2r)})\ d{\bf s}.\nonumber
    \end{align}
    Note that $g_1\wedge_1 g_2=0$. If $\sigma^{-1}(p+q-2r+1)\notin \{ 1, \dots, p-r+1 \}$, the above integral will be zero. We define $\sigma'$ by
    $$\sigma' =
\left( \begin{array}{ccc}
1 & \cdots & p+q-2r \\
\sigma(1) & \overset{\sigma^{-1}(p+q-2r+1)}{\Check{\cdots}} & \sigma(p+q-2r+1) \\
\end{array}
\right)\in S_{p+q-2r}.$$
Let $\sgn(\sigma')$ be the sign of permuting $\sigma(1), \overset{\sigma^{-1}(p+q-2r+1)}{\Check{\dots}}, \sigma(p+q-2r+1)$ in ascending order. Then 
\begin{align*}
    &\sgn(\sigma)= (-1)^{p+q-2r+1-\sigma^{-1}(p+q-2r+1)}\sgn(\sigma'),\\
    &f(t_{\sigma(1)}, \dots, t_{\sigma(p-r+1)}, \overrightarrow{s_{r-1}})\\
    & = (-1)^{p-r+1-\sigma^{-1}(p+q-2r+1)}f(t_{\sigma'(1)}, \overset{\sigma^{-1}(p+q-2r+1)}{\Check{\ldots}}, t_{\sigma'(p-r+1)}, t_{p+q-2r+1}, \overrightarrow{s_{r-1}}).
\end{align*}
Therefore, multiplying both sides of (\ref{2.5}) by $r(p+q-2r+1)/(p-r+1)$, we have
\begin{align*}
&\frac{r(p+q-2r+1)}{p-r+1}(f\hat{\wedge}_{r-1}g_1)\wedge_1 g_2\nonumber \\
        &= \frac{r}{p-r+1}\cdot\frac{1}{(p+q-2r)!}\sum_{\sigma\in S_{p+q-2r+1}}\sgn(\sigma)\nonumber \\
        &\quad\times \int f(t_{\sigma(1)}, \dots, t_{\sigma(p-r+1)}, \overrightarrow{s_{r-1}})
        g_1(\overleftarrow{s_{r-1}}, t_{\sigma(p-r+2)}, \dots, t_{\sigma(p+q-2r+1)})\\
         &\qquad\times g_2(t_{p+q-2r+1})\ d{\bf s} dt_{p+q-2r+1}\\
    &= \frac{(-1)^{q+r}r}{(p+q-2r)!}\sum_{\sigma'\in S_{p+q-2r}}\sgn(\sigma')\\
        &\quad\times \int f(t_{\sigma'(1)}, \dots, t_{\sigma'(p-r+1)}, t_{p+q-2r+1}, \overrightarrow{s_{r-1}})
        g_1(\overleftarrow{s_{r-1}}, t_{\sigma'(p-r+2)}, \dots, t_{\sigma'(p+q-2r+1)})\\
        &\qquad\times g_2(t_{p+q-2r+1})\ d{\bf s} dt_{p+q-2r+1}.
\end{align*}
By renaming $t_{p+q-2r+1}$ to $s_r$, the desired result is obtained.
\end{proof}
We define the anti-symmetric Malliavin derivative and divergence on the Clifford algebra.
\begin{dfn}\label{D and delta}
    For every $n\in \mathbb{Z}_{\ge 0}$, let $v_n \in \Lambda_n $ and $ w_n\in L^2(\mathbb{R}_+^{n+1})$ such that $w_n(t,\cdot)\in \Lambda_n$ for $\mathrm{a.e.}\ t\ge 0$. We define the operators $\mathcal{D}_t : D^{-1}\Lambda_n \rightarrow L^2(\mathbb{R}_+)\otimes D^{-1}\Lambda_{n-1},\ \delta : L^2(\mathbb{R}_+)\otimes D^{-1}\Lambda_n \rightarrow D^{-1}\Lambda_{n+1}$ as follows:
    \begin{gather*}
        \mathcal{D}_t (J_n(v_n)) \coloneq nJ_{n-1}(v_n(t,\cdot)),\\
        \delta(J_n(w_n(t,\cdot))) \coloneq J_{n+1}(\hat{w}_n),
    \end{gather*}
    where $J_{-1}(\cdot)=0$. For every $N\in\mathbb{Z}_{\ge0}$, these definitions are extended linearly to the series $$\sum_{i=0}^N J_i(v_i),\ \sum_{i=0}^N J_i(w_i(t,\cdot)).$$
\end{dfn}
\begin{prop}\label{anti-Leibniz}
For every $p, q \in \mathbb{Z}_{\ge 0}$, let $f\in \Lambda_p,\ g\in \Lambda_q$. Then 
\begin{align*}
        \mathcal{D}_t(J_p(f)J_q(g))&= \mathcal{D}_t(J_p(f))J_q(g) + (-1)^pJ_p(f)\mathcal{D}_t(J_q(g))\\
        &= \mathcal{D}_t(J_p(f))J_q(g) + \beta(J_p(f))\mathcal{D}_t(J_q(g)).
    \end{align*}
\end{prop}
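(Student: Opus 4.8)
The plan is to reduce the first (graded Leibniz) identity to a single combinatorial identity on kernels by applying the product formula of Proposition~\ref{anti-multiple formula} to both sides, and then to read off the second identity from the definition of $\beta$. I would begin by expanding the left-hand side: since $f\in\Lambda_p,\ g\in\Lambda_q$, Proposition~\ref{anti-multiple formula} gives a finite sum, so by the linearity of $\mathcal{D}_t$ and Definition~\ref{D and delta},
$$
\mathcal{D}_t\big(J_p(f)J_q(g)\big) = \sum_{r=0}^{p\wedge q} r!\binom{p}{r}\binom{q}{r}(p+q-2r)\,J_{p+q-2r-1}\big((f\hat\wedge_r g)(t,\cdot)\big).
$$
On the right-hand side I would use $\mathcal{D}_t(J_p(f)) = pJ_{p-1}(f(t,\cdot))$ and $\mathcal{D}_t(J_q(g)) = qJ_{q-1}(g(t,\cdot))$, and then re-apply Proposition~\ref{anti-multiple formula} to each of the products $J_{p-1}(f(t,\cdot))J_q(g)$ and $J_p(f)J_{q-1}(g(t,\cdot))$.

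The next step is to match the two expressions degree by degree. Because $D$ is unitary and $\Gamma_a = \bigoplus_n \Lambda_n$, the homogeneous components $J_n(\cdot)$ of distinct degree are linearly independent, so it suffices to compare the coefficient of $J_{p+q-2r-1}$ for each $r$. In the expansion of $J_{p-1}(f(t,\cdot))J_q(g)$ only the term with contraction index $r$ has degree $p+q-2r-1$, and likewise for $J_p(f)J_{q-1}(g(t,\cdot))$. Using $p\binom{p-1}{r} = (p-r)\binom{p}{r}$ and $q\binom{q-1}{r} = (q-r)\binom{q}{r}$ and cancelling the common factor $r!\binom{p}{r}\binom{q}{r}$, the entire statement collapses to the kernel identity
$$
(p+q-2r)(f\hat\wedge_r g)(t,\cdot) = (p-r)\,f(t,\cdot)\hat\wedge_r g + (-1)^p (q-r)\,f\hat\wedge_r g(t,\cdot)
$$
for every $0\le r\le p\wedge q$.

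To prove this identity I would write $f\hat\wedge_r g$ as the anti-symmetrization over $S_{p+q-2r}$ of the contracted kernel $f\wedge_r g$ of (\ref{arrow}), fix the first output variable equal to $t$, and split the sum according to whether the pre-image of the first slot lies in the $f$-block $\{1,\dots,p-r\}$ or the $g$-block $\{p-r+1,\dots,p+q-2r\}$. By the anti-symmetry of $f$ and of $g$, the $p-r$ terms of the first type are equal and assemble into $f(t,\cdot)\hat\wedge_r g$, while the $q-r$ terms of the second type assemble into $f\hat\wedge_r g(t,\cdot)$ once the selected $g$-variable is commuted to the front. The sign produced in this commutation, reconciled with the reversal convention $\overrightarrow{s_r}/\overleftarrow{s_r}$ of (\ref{arrow}), is exactly $(-1)^p$. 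I expect this sign bookkeeping to be the main obstacle; it is entirely analogous to the permutation-sign computation already carried out in the proof of Proposition~\ref{anti-multiple formula}, and one may verify the sign on small cases (for instance $p=1,\ q=2$, and $p=q=2,\ r=1$) to fix orientation before the general argument.

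Finally, the second equality is immediate from the definition of the conjugation operator. By definition $D(J_p(f)) = J_p(f)\Omega = f \in \Lambda_p$, and $\Gamma(-1)$ acts as multiplication by $(-1)^p$ on $\Lambda_p$, so
$$
\beta(J_p(f)) = D^{-1}\Gamma(-1)D(J_p(f)) = (-1)^p J_p(f).
$$
Substituting this into the first line turns $(-1)^pJ_p(f)\mathcal{D}_t(J_q(g))$ into $\beta(J_p(f))\mathcal{D}_t(J_q(g))$, which completes the proof.
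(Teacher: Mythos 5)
Your proposal follows essentially the same route as the paper's own proof: expand both sides via Proposition~\ref{anti-multiple formula}, match chaos components degree by degree using $p\binom{p-1}{r}=(p-r)\binom{p}{r}$ and $q\binom{q-1}{r}=(q-r)\binom{q}{r}$, reduce everything to the kernel identity $(p+q-2r)(f\hat{\wedge}_r g)(t,\cdot)=(p-r)\,f(t,\cdot)\hat{\wedge}_r g+(-1)^p(q-r)\,f\hat{\wedge}_r g(t,\cdot)$, and prove that identity by splitting the anti-symmetrization over $S_{p+q-2r}$ according to whether the preimage of the first slot lies in the $f$-block or the $g$-block, with the same $\sigma'$-type sign bookkeeping yielding $(-1)^p$; the treatment of $\beta(J_p(f))=(-1)^pJ_p(f)$ is likewise how the second equality is understood in the paper. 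The proposal is correct.
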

\begin{proof}
By using Proposition~\ref{anti-multiple formula} and Definition~\ref{D and delta}, we have
    \begin{align*}
        \mathcal{D}_t(J_p(f)J_q(g))&= \mathcal{D}_t\big( \sum_{r=0}^{p\wedge q} r!\binom{p}{r} \binom{q}{r}J_{p+q-2r}(f\hat{\wedge}_r g) \big)\\
        &= \sum_{r=0}^{p\wedge q} r!\binom{p}{r} \binom{q}{r}(p+q-2r)J_{p+q-2r-1} ( (f\hat{\wedge}_r g)(t,\cdot) ).
    \end{align*}
    In the same way,
        \begin{align*}
        \mathcal{D}_t(J_p(f)) J_q(g) &= pJ_{p-1}(f(t,\cdot)) J_q(g)\\
        &= \sum_{r=0}^{(p-1)\wedge q} r!\binom{p-1}{r} \binom{q}{r}pJ_{p+q-2r-1} (f(t,\cdot)\hat{\wedge}_r g)\\
        &= \sum_{r=0}^{p\wedge q} r!\binom{p}{r} \binom{q}{r}(p-r)J_{p+q-2r-1} (f(t,\cdot)\hat{\wedge}_r g),\\
        J_p(f)\mathcal{D}_t(J_q(q)) &= \sum_{r=0}^{p\wedge q} r!\binom{p}{r} \binom{q}{r}(q-r)J_{p+q-2r-1} (f\hat{\wedge}_r g(t,\cdot)).
    \end{align*}
    Here, we use techniques similar to those at the end of the proof of Proposition~\ref{anti-multiple formula}. For $\sigma\in S_{p+q-2r}$, we define
    $$\sigma' =
\Big( \begin{array}{ccc}
1 & \cdots & p+q-2r-1 \\
\sigma(0) & \overset{\sigma^{-1}(0)}{\Check{\cdots}} & \sigma(p+q-2r-1) \\
\end{array}
\Big)\in S_{p+q-2r-1}.$$
By using $\sigma'$, we can compute the integrand $f\hat{\wedge}_r g.$
    \begin{align*}
        &(p+q-2r)(f\hat{\wedge}_r g)(x_0, x_1, \dots, x_{p+q-2r-1})\\
        &=
        \frac{1}{(p+q-2r-1)!}\sum_{\sigma\in S_{p+q-2r}}\sgn(\sigma)\\
        &\quad\times \int f(x_{\sigma(0)}, x_{\sigma(1)}, \dots, x_{\sigma(p-r-1)},  \overrightarrow{s_r})g(\overleftarrow{s_r}, x_{\sigma(p-r)}, \dots, x_{\sigma(p+q-2r-1)} )\ d{\bf s}\\
        &= 
        \frac{1}{(p+q-2r-1)!}\big(\sum_{\substack{\sigma\in S_{p+q-2r} \\ \sigma^{-1}(0)\in \{ 0, \dots, p-r-1 \} }} + \sum_{\substack{\sigma\in S_{p+q-2r} \\ \sigma^{-1}(0)\in \{ p-r, \dots, p+q-2r-1 \} }}\big)\sgn(\sigma)\\
        &\quad\times \int f(x_{\sigma(0)}, x_{\sigma(1)}, \dots, x_{\sigma(p-r-1)},  \overrightarrow{s_r})g(\overleftarrow{s_r}, x_{\sigma(p-r)}, \dots, x_{\sigma(p+q-2r-1)} )\ d{\bf s}\\
        &=
        \frac{1}{(p+q-2r-1)!}\sum_{\substack{\sigma\in S_{p+q-2r} \\ \sigma^{-1}(0)\in \{ 0, \dots, p-r-1 \} }}(-1)^{\sigma^{-1}(0)}\sgn(\sigma')\\
        &\quad\times \int (-1)^{\sigma^{-1}(0)} f(x_0, x_{\sigma'(1)}, \overset{\sigma^{-1}(0)}{\Check{\dots}}, x_{\sigma'(p-r-1)},  \overrightarrow{s_r})g(\overleftarrow{s_r}, x_{\sigma'(p-r)}, \dots, x_{\sigma'(p+q-2r-1)} )\ d{\bf s}\\
        &+ 
        \frac{1}{(p+q-2r-1)!}\sum_{\substack{\sigma\in S_{p+q-2r} \\ \sigma^{-1}(0)\in \{ p-r, \dots, p+q-2r-1 \} }}(-1)^{\sigma^{-1}(0)}\sgn(\sigma')\\
        &\quad\times \int (-1)^{\sigma^{-1}(0)-(p-r)} f(x_{\sigma'(1)}, \dots, x_{\sigma'(p-r)},  \overrightarrow{s_r})g(\overleftarrow{s_r}, x_0, x_{\sigma'(p-r+1)}, \overset{\sigma^{-1}(0)}{\Check{\dots}}, x_{\sigma'(p+q-2r-1)} )\ d{\bf s}\\
        &=
        \frac{p-r}{(p+q-2r-1)!}\sum_{\sigma \in S_{p+q-2r-1}}\sgn(\sigma)\\
        &\quad\times \int f(x_0, x_{\sigma(1)}, \dots, x_{\sigma(p-r-1)},  \overrightarrow{s_r})g(\overleftarrow{s_r}, x_{\sigma(p-r)}, \dots, x_{\sigma(p+q-2r-1)} )\ d{\bf s}\\
        &+ 
        \frac{(-1)^p (q-r)}{(p+q-2r-1)!}\sum_{\sigma\in S_{p+q-2r-1}}\sgn(\sigma)\\
        &\quad\times \int f(x_{\sigma(1)}, \dots, x_{\sigma(p-r)},  \overrightarrow{s_r})g(x_0, \overleftarrow{s_r}, x_{\sigma(p-r+1)}, \dots, x_{\sigma'(p+q-2r-1)} )\ d{\bf s}.
    \end{align*}
    Similarly, we have
        \begin{align*}
        (p-r)f(t,\cdot)\hat{\wedge}_r g &= \frac{p-r}{(p+q-2r-1)!}\sum_{\sigma \in S_{p+q-2r-1}}\sgn(\sigma)\\
        &\quad\times \int f(t, x_{\sigma(1)}, \dots, x_{\sigma(p-r-1)},  \overrightarrow{s_r})g(\overleftarrow{s_r}, x_{\sigma(p-r)}, \dots, x_{\sigma(p+q-2r-1)} )\ d{\bf s},\\
        (q-r)f\hat{\wedge}_r g(t, \cdot) &= \frac{q-r}{(p+q-2r-1)!}\sum_{\sigma\in S_{p+q-2r-1}}\sgn(\sigma)\\
        &\quad\times \int f(x_{\sigma(1)}, \dots, x_{\sigma(p-r)},  \overrightarrow{s_r})g(t, \overleftarrow{s_r}, x_{\sigma(p-r+1)}, \dots, x_{\sigma'(p+q-2r-1)} )\ d{\bf s}.
    \end{align*}
    Therefore, the desired result is obtained.
    \end{proof}
\begin{prop}\label{anti integration by part}
For every $p, q \in \mathbb{Z}_{\ge 0}$, let $f \in L^2(\mathbb{R}_+^{p+1})$ such that $f(t, \cdot)\in \Lambda_p$ for $\mathrm{a.e.}\ t\ge 0$ and $ g\in \Lambda_q $. Then 
\begin{align*}
        m\big( \delta( J_p(f(t,\cdot)) ) J_{p+1}(g)\big) &= (-1)^p\int m\big(J_p(f(t, \cdot)) \mathcal{D}_t(J_{p+1}(g))\big)\ dt\\
        &= \int m\big(\beta(J_p(f(t, \cdot))) \mathcal{D}_t(J_{p+1}(g))\big)\ dt.
    \end{align*}
\end{prop}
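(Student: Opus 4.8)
The plan is to collapse both sides of the identity to the same scalar integral of $f$ against $g$, using only the definitions of $\mathcal{D}_t$ and $\delta$ together with the vacuum pairing on $\Gamma_a$; notably, no appeal to the product formula (Proposition~\ref{anti-multiple formula}) is needed. The single tool I would isolate first is the bilinear pairing
$$
m\big(J_m(v)J_n(w)\big) = \langle J_m(v)^*\Omega, J_n(w)\Omega\rangle_{\Gamma_a},
$$
which vanishes unless $m = n$ because the chaos spaces $\Lambda_m$ and $\Lambda_n$ are orthogonal inside $\Gamma_a$, and which for $m = n$ equals $n!\int \overleftarrow{v}\, w\, d\mathbf{x}$ after inserting $J_n(v)^* = J_n(\overline{\overleftarrow{v}})$ together with the stated inner product on $\Lambda_n$ and the relation $J_k(u)\Omega = u$. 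In particular this already forces both sides of the proposition to vanish unless $q = p+1$, so I may assume $g \in \Lambda_{p+1}$.

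Next I would substitute the definitions $\delta(J_p(f(t,\cdot))) = J_{p+1}(\hat f)$, with $\hat f$ the full anti-symmetrization of $f$ in all $p+1$ variables, and $\mathcal{D}_t(J_{p+1}(g)) = (p+1)J_p(g(t,\cdot))$. Applying the pairing formula, the left-hand side becomes $(p+1)!\int \overleftarrow{\hat f}\, g$, while the right-hand side becomes $(-1)^p (p+1)!\int_0^\infty \int \overleftarrow{f(t,\cdot)}\, g(t,\cdot)\, d\mathbf{x}\, dt$. The crucial simplification is that, since $g$ is anti-symmetric, integrating any function against $g$ equals integrating its anti-symmetrization against $g$; hence $\int \hat f\, g = \int f\, g$ and the anti-symmetrization $\hat f$ never has to be written out explicitly. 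What remains is pure sign bookkeeping: the reversal $\overleftarrow{(\cdot)}$ contributes $(-1)^{\binom{p+1}{2}}$ on the left (reversing $p+1$ anti-symmetric arguments) and $(-1)^{\binom{p}{2}}$ on the right (reversing only the last $p$ anti-symmetric arguments of $f(t,\cdot)$), so the two sides coincide exactly because $\binom{p+1}{2} = \binom{p}{2} + p$, which regenerates the announced factor $(-1)^p$.

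The second equality is then immediate once I record that $\beta$ acts on $\Lambda_p$ as the scalar $(-1)^p$: since $\beta = D^{-1}\Gamma(-1)D$ and $\Gamma(-1)$ restricts to $(-1)^p$ on $\Lambda_p$, we get $\beta(J_p(f(t,\cdot))) = (-1)^p J_p(f(t,\cdot))$, which converts the first right-hand expression into the second verbatim. I expect the only real obstacle to be the sign accounting — keeping the reversal convention in the pairing formula consistent with the orientation conventions baked into the definitions of $\hat f$ and $\wedge_r$, and checking the parity identity $\binom{p+1}{2} \equiv \binom{p}{2} + p \pmod 2$. The analytic content is harmless, since the isometry $\|J_n(w)\|_{L^2(\mathscr{C})} = \|w\|_{\Lambda_n}$ and a density argument let me work with the explicit integrands throughout, so no genuine estimate is required.
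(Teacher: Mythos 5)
Your proof is correct and is essentially the paper's own argument: both evaluate the two sides through the vacuum pairing $m\big(J_n(v)J_n(w)\big)=n!\int \overleftarrow{v}\,w\,d{\bf x}$ (the paper states it as $m\big(J_p(h)J_p(k)\big)=p!\int h(\overrightarrow{x_p})k(\overleftarrow{x_p})\,d{\bf x}$) and then settle the sign by anti-symmetry, with neither proof needing the multiplication formula. The only divergence is organizational: the paper expands $\hat f$ into $p+1$ terms and realigns each against $g$ before moving $t$ across $p$ slots to produce $(-1)^p$, whereas you drop the anti-symmetrization via $\int\hat f\,g=\int f\,g$ and count reversal parities through $\binom{p+1}{2}=\binom{p}{2}+p$ — the same bookkeeping done more compactly — and your explicit remark that $\beta$ acts as $(-1)^p$ on the $p$-th chaos makes precise what the paper leaves implicit in its final step.
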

\begin{proof}
Note that $m\big(J_p(h) J_p(k)\big) = p!\int h(\overrightarrow{x_p})k(\overleftarrow{x_p})\ d{\bf x}$ for $h,k\in\Lambda_p$. Therefore,
\begin{align*}
        m\big(\delta(J_p(f(t,\cdot))) J_{p+1}(g)\big)
        &=
        (p+1)!\int \frac{1}{p+1}\sum_{i=1}^{p+1}(-1)^{i-1}f(t_{p+1}, \overset{p-i+2}{\Check{\dots}}, t_1, t_i)g(t_1, \dots, t_{p+1})\ d{\bf t}\\
        &=
        (p+1)!\int \frac{1}{p+1}\sum_{i=1}^{p+1}(-1)^{i-1}f(t_{p+1}, \overset{p-i+2}{\Check{\dots}}, t_1, t_i)(-1)^{i-1}g(t_i, t_1, \overset{i}{\Check{\dots}}, t_{p+1})\ d{\bf t}\\
        &=
        (p+1)!\int  f(\overrightarrow{t_{p+1}})g(\overleftarrow{t_{p+1}})\ d{\bf t}\\
        &=
        (p+1)!\int  f(t, \overrightarrow{t_{p}})g(\overleftarrow{t_{p}}, t)\ d{\bf t} dt\\
        &=
        (p+1)!(-1)^p \int  f(t, \overrightarrow{t_{p}})g(t, \overleftarrow{t_{p}})\ d{\bf t} dt\\
        &= 
        (-1)^p\int m\big(J_p(f(t, \cdot)) \mathcal{D}_t(J_{p+1}(g))\big)\ dt.
    \end{align*}
    Consequently, we have the desired result.
\end{proof}
\begin{prop}
Let $p\in\mathbb{Z}_{\ge0}$, $h\in L^2(\mathbb{R}_+)$ and $f\in \Lambda_p$, and let $h\otimes J_p(f) \in L^2(\mathbb{R}_+)\otimes D^{-1}\Lambda_p$. Then
$$\{ \mathcal{D}_t, \delta \} h\otimes J_p(f) = h(t)J_p(f). $$
Formally, $\{ \mathcal{D}_t, \delta \} = \delta_t\otimes \mathrm{Id}_{L^2(\mathscr{C})}$, where $\delta_t$ is the Dirac delta function concentrated at $t$.
\end{prop}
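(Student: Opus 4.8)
The plan is to unwind the two operators directly from Definition~\ref{D and delta} and reduce everything to a single combinatorial identity about the wedge product, in the same spirit as the cofactor-expansion arguments at the ends of the proofs of Propositions~\ref{anti-multiple formula} and~\ref{anti-Leibniz}.

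First I would record the action of each operator on $u = h \otimes J_p(f)$. Writing $w_p(s, x_1, \dots, x_p) = h(s) f(x_1, \dots, x_p)$, the element $u$ is $J_p(w_p(s,\cdot))$ viewed as a function of the divergence variable $s$, so by definition $\delta(u) = J_{p+1}(h \wedge f)$, where $h \wedge f$ denotes the anti-symmetrization of $h \otimes f$ over all $p+1$ variables. On the other hand $\mathcal{D}_t$ acts on the Clifford factor, giving $\mathcal{D}_t(u) = p\, h \otimes J_{p-1}(f(t,\cdot))$ with $t$ a fixed parameter. Applying the remaining operator to each, Definition~\ref{D and delta} yields
\begin{align*}
\mathcal{D}_t \delta(u) &= (p+1)\, J_p\big( (h \wedge f)(t,\cdot) \big), \\
\delta \mathcal{D}_t(u) &= p\, J_p\big( h \wedge f(t,\cdot) \big),
\end{align*}
where in the second line $f(t,\cdot) \in \Lambda_{p-1}$ is anti-symmetrized against $h$ over $p$ variables.

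The crux is the pointwise identity
$$
(p+1)(h \wedge f)(t, x_1, \dots, x_p) = h(t)\, f(x_1, \dots, x_p) - p\,(h \wedge f(t,\cdot))(x_1, \dots, x_p).
$$
To obtain it I would expand $h \wedge f$ by a Laplace (cofactor) expansion along the slot carrying $h$: since $f$ is already anti-symmetric,
$$
(h \wedge f)(x_0, \dots, x_p) = \frac{1}{p+1}\sum_{i=0}^p (-1)^i\, h(x_i)\, f(x_0, \dots, \widehat{x_i}, \dots, x_p),
$$
the hat denoting omission. Setting $x_0 = t$ separates the $i=0$ term, which is exactly $\frac{1}{p+1} h(t) f(x_1, \dots, x_p)$, from the remaining $p$ terms; in those, using $(-1)^i = -(-1)^{i-1}$ and recognizing $f(t,\cdot)$ as an element of $\Lambda_{p-1}$ identifies $\sum_{i=1}^p (-1)^i h(x_i) f(t, x_1, \dots, \widehat{x_i}, \dots, x_p)$ with $-p\,(h \wedge f(t,\cdot))(x_1,\dots,x_p)$, which is the claimed identity.

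Finally I would substitute this identity into the expression for $\mathcal{D}_t\delta(u)$, giving $\mathcal{D}_t\delta(u) = h(t) J_p(f) - p\, J_p(h \wedge f(t,\cdot))$; adding $\delta\mathcal{D}_t(u) = p\, J_p(h \wedge f(t,\cdot))$ cancels the wedge terms and leaves $\{\mathcal{D}_t, \delta\}\,u = h(t) J_p(f)$, as required. Interpreting $h(t) = \int \delta_t(s) h(s)\, ds$ then yields the formal statement $\{\mathcal{D}_t, \delta\} = \delta_t \otimes \mathrm{Id}_{L^2(\mathscr{C})}$. The one genuinely delicate point is the sign bookkeeping in the cofactor expansion, namely checking that the $i=0$ slot contributes $+h(t)f$ while the remaining slots assemble, with the correct global sign, into precisely $-p\,(h \wedge f(t,\cdot))$; everything else is a direct substitution.
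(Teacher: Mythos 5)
Your proof is correct, but it takes a genuinely different route from the paper's. The paper never touches kernels: it specializes Proposition~\ref{anti-multiple formula} to $q=1$ to get the structural identity $J_1(h)J_p(f)=\delta(h\otimes J_p(f))+\int h(s)\mathcal{D}_s(J_p(f))\,ds$ (the Clifford analogue of the classical $\delta(Fh)=FW(h)-\langle h,\mathcal{D}F\rangle_{\mathcal{H}}$), then applies the anti-Leibniz rule of Proposition~\ref{anti-Leibniz} to $\mathcal{D}_t\bigl(J_1(h)J_p(f)\bigr)$ and exploits the anti-commutativity $\mathcal{D}_t\circ\mathcal{D}_s=-\mathcal{D}_s\circ\mathcal{D}_t$, so that in the sum $\mathcal{D}_t\delta+\delta\mathcal{D}_t$ the terms $J_1(h)\mathcal{D}_t(J_p(f))$ and the double-derivative integrals cancel, leaving $h(t)J_p(f)$. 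You instead compute both compositions directly from Definition~\ref{D and delta} and reduce the claim to the Laplace expansion $(h\wedge f)(x_0,\dots,x_p)=\frac{1}{p+1}\sum_{i=0}^p(-1)^i h(x_i)f(x_0,\dots,\widehat{x_i},\dots,x_p)$ evaluated at $x_0=t$; your key identity $(p+1)(h\wedge f)(t,\cdot)=h(t)f-p\,\bigl(h\wedge f(t,\cdot)\bigr)$ and the operator bookkeeping $\mathcal{D}_t\delta(u)=(p+1)J_p\bigl((h\wedge f)(t,\cdot)\bigr)$, $\delta\mathcal{D}_t(u)=p\,J_p\bigl(h\wedge f(t,\cdot)\bigr)$ are all consistent with the paper's conventions (the signs check out, e.g.\ at $p=1$ both sides equal $h(t)J_1(f)$). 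What each approach buys: yours is self-contained and more elementary, needing neither the product formula nor the anti-Leibniz rule, so it isolates the CAR as a purely combinatorial fact about anti-symmetrization; the paper's is shorter given the machinery already established and makes visible the identity $\delta(h\otimes F)=J_1(h)F-\int h(s)\mathcal{D}_s(F)\,ds$, which has independent interest (it is reused implicitly in Lemma~\ref{delta}) and explains structurally why the anticommutator, rather than the commutator, is the natural object here.
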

\begin{proof}
The following formulas hold:
\begin{align*}
        J_1(h)J_p(f)&= J_{p+1}(h\wedge f)+pJ_{p-1}(h\wedge_1 f)\\
        &=
        \delta(h(t) J_p(f))+\int h(t)\mathcal{D}_t(J_p(f))\ dt,\\
        \mathcal{D}_t\circ \mathcal{D}_s (J_p(f))&= p(p-1)J_{p-2}(f(s, t, \cdot))\\
        &=  -p(p-1)J_{p-2}(f(t, s, \cdot))\\
        &= -\mathcal{D}_s\circ \mathcal{D}_t (J_p(f)).
    \end{align*}
    Using Proposition~\ref{anti-Leibniz}, we have
    \begin{align*}
        \mathcal{D}_t\circ \delta(h \otimes J_p(f)) &= h(t)J_p(f) - J_1(h)\mathcal{D}_t(J_p(f)) -\int h(s)\mathcal{D}_t\circ \mathcal{D}_s (J_p(f))\ ds,\\
        \delta \circ \mathcal{D}_t(h \otimes J_p(f))
        &=\delta(h \otimes \mathcal{D}_t(J_p(f)))\\
        &=
        J_1(h)\mathcal{D}_t(J_p(f)) -\int h(s)\mathcal{D}_s\circ \mathcal{D}_t (J_p(f))\ ds.
    \end{align*}
    Therefore, the desired result is obtained.
    \end{proof}
We investigate the domains of $\mathcal{D}, \delta$.
\begin{prop}
The operator $\mathcal{D}$ is closable as an operator from $L^2(\mathscr{C})$ to $L^2(\mathbb{R}_+)\otimes L^2(\mathscr{C})$.
\end{prop}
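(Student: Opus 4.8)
The plan is to exploit the fact that $\mathcal{D}$ is block-diagonal with respect to the chaos grading $L^2(\mathscr{C}) = \bigoplus_{n\ge 0} D^{-1}\Lambda_n$, so that closability reduces to an elementary orthogonality argument once a single norm identity is established. Concretely, I would first compute, for $F = J_n(v_n)$ with $v_n \in \Lambda_n$, the norm of $\mathcal{D}F$ in $L^2(\mathbb{R}_+)\otimes L^2(\mathscr{C})$. Using $\mathcal{D}_t(J_n(v_n)) = n J_{n-1}(v_n(t,\cdot))$, the isometry $\|J_{n-1}(w)\|_{L^2(\mathscr{C})}^2 = \|w\|_{\Lambda_{n-1}}^2 = (n-1)!\int|w|^2$, and the antisymmetry of $v_n$, I would obtain
$$
\|\mathcal{D}F\|_{L^2(\mathbb{R}_+)\otimes L^2(\mathscr{C})}^2 = \int_0^\infty n^2 \|J_{n-1}(v_n(t,\cdot))\|_{L^2(\mathscr{C})}^2\, dt = n\,\|J_n(v_n)\|_{L^2(\mathscr{C})}^2 .
$$
Thus $\mathcal{D}$ restricted to the $n$-th chaos is a bounded operator of norm $\sqrt{n}$, and it sends the $n$-th chaos into the mutually orthogonal subspace $L^2(\mathbb{R}_+)\otimes D^{-1}\Lambda_{n-1}$.

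With this in hand, closability is immediate. I would take a sequence $u_k$ in the domain (finite chaos sums) with $u_k \to 0$ in $L^2(\mathscr{C})$ and $\mathcal{D}u_k \to \eta$ in $L^2(\mathbb{R}_+)\otimes L^2(\mathscr{C})$, and show $\eta = 0$. Writing $P_n$ for the orthogonal projection onto $D^{-1}\Lambda_n$ and $Q_n = \mathrm{id}\otimes P_n$ for the corresponding projection on the target space, one has $Q_{n-1}\mathcal{D} = \mathcal{D}P_n$ on the domain because $\mathcal{D}$ lowers the chaos degree by exactly one. Since $Q_{n-1}$ is continuous, $Q_{n-1}\eta = \lim_k Q_{n-1}\mathcal{D}u_k = \lim_k \mathcal{D}(P_n u_k)$, and by the norm identity $\|\mathcal{D}(P_n u_k)\| = \sqrt{n}\,\|P_n u_k\| \le \sqrt{n}\,\|u_k\| \to 0$. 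Hence $Q_{n-1}\eta = 0$ for every $n \ge 1$; as the subspaces $L^2(\mathbb{R}_+)\otimes D^{-1}\Lambda_{m}$ with $m\ge 0$ span the whole target Hilbert space, $\eta = 0$.

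There is essentially no hard analytic obstacle here; the only points requiring care are bookkeeping ones. First, I must confirm that the norm computation correctly accounts for the combinatorial factor $n^2 \cdot (n-1)!/n! = n$, which relies on the specific normalization $\langle f,g\rangle_{\Lambda_n} = n!\int \bar f g$ fixed in the paper. Second, the identity $Q_{n-1}\mathcal{D} = \mathcal{D}P_n$ is asserted only on finite chaos sums, which is exactly the stated domain, so it is legitimate to apply it to the $u_k$.

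Alternatively, and perhaps more in keeping with the duality viewpoint, one could deduce closability from the integration-by-parts formula of Proposition~\ref{anti integration by part}: that identity exhibits a $\beta$-twisted version of the divergence $\delta$ as an adjoint of $\mathcal{D}$, so that every element of the form $h \otimes J_q(g)$ lies in the domain of $\mathcal{D}^*$. Since such elements are total in $L^2(\mathbb{R}_+)\otimes L^2(\mathscr{C})$, the adjoint is densely defined, and an operator with densely defined adjoint is automatically closable. The only delicate step in this route is translating the bilinear trace pairing $m(\,\cdot\,\cdot\,)$ appearing in Proposition~\ref{anti integration by part} into the sesquilinear inner product $\langle u,v\rangle = m(u^*v)$ of $L^2(\mathscr{C})$, keeping track of the conjugation $\overline{\overleftarrow{\,\cdot\,}}$ and the sign twist $\beta$; I would handle this using the explicit formula $m(J_q(a)J_q(b)) = q!\int a(\overrightarrow{x_q})b(\overleftarrow{x_q})\,d\mathbf{x}$ recorded in the preceding proof.
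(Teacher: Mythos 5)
Your primary argument is correct, but it is genuinely different from the paper's proof. The paper argues by duality: given a sequence $u_k \to 0$ in $L^2(\mathscr{C})$ with $\mathcal{D}u_k \to \eta$, it pairs $\eta$ against test elements of the form $h(t)W$ (with $h \in L^2(\mathbb{R}_+)$ and $W$ a finite chaos sum) and uses the integration-by-parts formula of Proposition~\ref{anti integration by part} to move the derivative off of $u_k$, rewriting $\int m(\mathcal{D}_t(u_k)h(t)W)\,dt$ as $\int m(-\beta(u_k)\mathcal{D}_t(W)h(t) + u_kW\Psi(h))\,dt$, which tends to $0$ since $u_k \to 0$; density of such test elements then forces $\eta = 0$. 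You instead exploit the grading: $\mathcal{D}$ maps the $n$-th chaos into $L^2(\mathbb{R}_+)\otimes D^{-1}\Lambda_{n-1}$ with norm exactly $\sqrt{n}$, these target subspaces are mutually orthogonal, and hence $Q_{n-1}\eta = \lim_k \mathcal{D}(P_n u_k) = 0$ for every $n \ge 1$. Your two supporting computations are both right: the identity $\|\mathcal{D}J_n(v_n)\|^2 = n\cdot n!\int |v_n|^2$ follows from the normalization $\langle f,g\rangle_{\Lambda_n}=n!\int \overline{f}g$ exactly as you say (and is in fact the content of the paper's next proposition characterizing $\mathrm{Dom}(\mathcal{D})$, so your route delivers that characterization for free), and the intertwining $Q_{n-1}\mathcal{D}=\mathcal{D}P_n$ holds on finite chaos sums because $\mathcal{D}$ lowers the degree by exactly one. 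As for what each approach buys: yours is more elementary and self-contained — no integration by parts, no $\beta$-twist, no appeal to faithfulness of $m$ — but it leans on $\mathcal{D}$ being bounded on each chaos, a feature special to this graded setting; the paper's duality argument is the one that survives when no such grading (or no blockwise boundedness) is available, and it dovetails directly with the subsequent proposition that $\mathcal{D}^*$ extends $\delta$. Finally, note that your ``alternative'' route — dense domain of $\mathcal{D}^*$ via Proposition~\ref{anti integration by part} — is not really an alternative: it is the paper's proof in all but phrasing, since the paper's sequential argument is precisely the statement that the adjoint is densely defined, unwound, with the bilinear-to-sesquilinear bookkeeping you flag handled implicitly by testing against $W$ and $h$.
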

\begin{proof}
Let $\{ V_n \} \subset D^{-1}(\bigoplus_n \Lambda_n)$ be a sequence such that $ V_n \to 0$ in $L^2(\mathscr{C})$, $\mathcal{D}_{\cdot}(V_n) \to \eta_{\cdot}$ in $L^2(\mathbb{R}_+)\otimes L^2(\mathscr{C})$. For any $ h\in L^2(\mathbb{R}_+) $ and $W\in  D^{-1}(\bigoplus_n \Lambda_n)$, we have
 \begin{align*}
        \int m( \eta_t h(t)  W)\ dt
        &= \lim_{n\to \infty}\int m( \mathcal{D}_t (V_n) h(t) W)\ dt\\
        &= \lim_{n\to \infty}\int m(-\beta(V_n)\mathcal{D}_t(W)h(t) + V_n W\Psi(h))\ dt\\
        &= 0,
    \end{align*}
    where we used Proposition~\ref{anti integration by part} for the second line.  Since $L^2(\mathbb{R}_+)\otimes D^{-1}(\bigoplus_n \Lambda_n)$ is dense in $L^2(\mathbb{R}_+)\otimes L^2(\mathscr{C})$,  it follows that $\eta_{\cdot}=0.$
\end{proof}
The closure of $\mathcal{D}$ is also denoted by $\mathcal{D}$. $\mathrm{Dom}(\mathcal{D})$ is the closure of $D^{-1}(\bigoplus_n \Lambda_n)$ with respect to the norm 
$$\| F \|_{1,2} = [ m(| F |^2) + m(\| \mathcal{D}_{\cdot}(F) \|^2_{L^2(\mathbb{R}_+)} )]^{1/2}.$$
\begin{prop}
    Let $F\in L^2(\mathscr{C})$ be expressed as $F=\sum_{n=0}^\infty J_n(f_n)$. Then $F\in \mathrm{Dom}(\mathcal{D})$ if and only if 
    \begin{align*}
        m(\| \mathcal{D}_{\cdot}F \|_{L^2(\mathbb{R}_+)}^2)&= \sum_{n=1}^\infty n^2 \| J_{n-1}(f(t,\cdot)) \|^2_{L^2(\mathbb{R}_+) \otimes L^2(\mathscr{C})}\\
        &= \sum_{n=1}^\infty n \| f_n \|_{\Lambda_n}^2\\
        &= \sum_{n=1}^\infty n\cdot n! \int |f_n|^2\ d{\bf x} < \infty.
    \end{align*}
    For $F\in \mathrm{Dom}(\mathcal{D})$, we have $\mathcal{D}_t(F)=\sum_{n=1}^\infty nJ_{n-1}(f_n(t,\cdot))$.
\end{prop}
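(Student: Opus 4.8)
The plan is to reduce everything to the orthogonal chaos decomposition supplied by the unitary $D$ and then run a standard graph-norm closure argument. Since $D : L^2(\mathscr{C}) \to \Gamma_a = \bigoplus_n \Lambda_n$ is unitary with $D(J_n(f_n)) = f_n$, the chaos components are mutually orthogonal and $\|F\|^2_{L^2(\mathscr{C})} = \sum_{n} \|f_n\|^2_{\Lambda_n}$. In particular, the projection onto the $n$-th chaos has norm one, so convergence $G_k \to F$ in $L^2(\mathscr{C})$ forces convergence of the $n$-th components in $\Lambda_n$ for each $n$; I will use this fact in both directions.

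The key computation is the norm identity on finite sums. For $F = \sum_{n=0}^N J_n(f_n) \in D^{-1}(\bigoplus_n \Lambda_n)$, Definition~\ref{D and delta} gives $\mathcal{D}_t F = \sum_{n=1}^N n J_{n-1}(f_n(t,\cdot))$, which for fixed $t$ is a sum over distinct chaos levels. Applying orthogonality at each $t$, the isometry $\|J_{n-1}(f_n(t,\cdot))\|_{L^2(\mathscr{C})} = \|f_n(t,\cdot)\|_{\Lambda_{n-1}}$, Fubini, and the normalization $\|g\|^2_{\Lambda_{n-1}} = (n-1)!\int|g|^2$, I obtain
$$m(\|\mathcal{D}_\cdot F\|^2_{L^2(\mathbb{R}_+)}) = \sum_{n=1}^N n^2 \int_0^\infty \|f_n(t,\cdot)\|^2_{\Lambda_{n-1}}\,dt = \sum_{n=1}^N n^2 (n-1)! \int |f_n|^2\,d\mathbf{x} = \sum_{n=1}^N n \cdot n! \int |f_n|^2\,d\mathbf{x},$$
which is precisely the three displayed expressions with $N$ in place of $\infty$. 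Consequently, on $D^{-1}(\bigoplus_n \Lambda_n)$ the graph norm splits as $\|F\|^2_{1,2} = \sum_{n} (1+n)\|f_n\|^2_{\Lambda_n}$.

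With this identity the equivalence follows quickly. For the \emph{if} direction, assuming $\sum_n n\|f_n\|^2_{\Lambda_n} < \infty$, the truncations $F_N = \sum_{n=0}^N J_n(f_n)$ satisfy $\|F - F_N\|^2_{1,2} = \sum_{n>N}(1+n)\|f_n\|^2_{\Lambda_n} \to 0$, so $F$ lies in the $\|\cdot\|_{1,2}$-closure of $D^{-1}(\bigoplus_n \Lambda_n)$, i.e. $F \in \mathrm{Dom}(\mathcal{D})$; since $\mathcal{D}$ is closed, $\mathcal{D}_t F = \lim_N \mathcal{D}_t F_N = \sum_{n=1}^\infty n J_{n-1}(f_n(t,\cdot))$ and the norm identity passes to the limit. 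For the \emph{only if} direction, choose $G_k \in D^{-1}(\bigoplus_n \Lambda_n)$ with $G_k \to F$ in $\|\cdot\|_{1,2}$; these are norm-bounded, say $\|G_k\|_{1,2} \le M$, and writing $G_k = \sum_n J_n(g^{(k)}_n)$ the $L^2$-convergence gives $g^{(k)}_n \to f_n$ in $\Lambda_n$ for each $n$. For each fixed $N$ one has $\sum_{n=0}^N (1+n)\|g^{(k)}_n\|^2_{\Lambda_n} \le M^2$; letting $k \to \infty$ and then $N \to \infty$ yields $\sum_n (1+n)\|f_n\|^2_{\Lambda_n} \le M^2 < \infty$, hence $\sum_n n\|f_n\|^2_{\Lambda_n} < \infty$.

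The argument is mostly bookkeeping; the one place demanding care is the key computation, where the anti-symmetric normalization $\|\cdot\|^2_{\Lambda_n} = n!\int|\cdot|^2$ must combine with the factor $n^2$ coming from $\mathcal{D}_t(J_n(\cdot)) = n J_{n-1}(\cdot)$ to leave a single factor $n$ (the collapse $n^2 \cdot (n-1)!/n! = n^2/n = n$). One should also verify that $f_n(t,\cdot)$ genuinely belongs to $\Lambda_{n-1}$ for a.e. $t$, which is immediate from the full antisymmetry of $f_n \in \Lambda_n$. The closure portion is routine functional analysis and presents no real obstacle.
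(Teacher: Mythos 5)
Your proposal is correct and takes essentially the same approach as the paper: the paper's one-line proof simply asserts that the result follows directly from the action of $\mathcal{D}_t$ on the core $D^{-1}(\bigoplus_n \Lambda_n)$, and your argument is precisely that computation (orthogonality of chaos levels, the isometry $\|J_{n-1}(g)\|_{L^2(\mathscr{C})}=\|g\|_{\Lambda_{n-1}}$, and the collapse $n^2(n-1)! = n\cdot n!$) combined with the graph-norm closure argument that the paper's definition of $\mathrm{Dom}(\mathcal{D})$ makes routine. No gaps.
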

\begin{proof}
This is shown directly from the action of $\mathcal{D}_t$ on $D^{-1}(\bigoplus_n \Lambda_n)$.
\end{proof}
\begin{prop}
    The adjoint of $\mathcal{D}$ is an extension of $\delta$.
\end{prop}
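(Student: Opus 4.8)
The plan is to verify the standard adjoint relation between $\mathcal{D}$ and $\delta$: for every $u$ in the algebraic domain of $\delta$ (the finite sums $\sum_i J_i(w_i(t,\cdot))$ of Definition~\ref{D and delta}), I want to show $u\in\mathrm{Dom}(\mathcal{D}^*)$ together with $\mathcal{D}^* u=\delta u$. By definition of the adjoint this amounts to
\[
\langle \mathcal{D} F, u \rangle_{L^2(\mathbb{R}_+)\otimes L^2(\mathscr{C})} = \langle F, \delta u \rangle_{L^2(\mathscr{C})}
\]
for all $F\in\mathrm{Dom}(\mathcal{D})$, where $\langle a,b\rangle_{L^2(\mathscr{C})}=m(a^*b)$. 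Since $D^{-1}(\bigoplus_n\Lambda_n)$ is by construction a core for the closure $\mathcal{D}$, and since $F\mapsto\langle\mathcal{D}F,u\rangle$ is continuous for the graph norm while $F\mapsto\langle F,\delta u\rangle$ is continuous for the $L^2(\mathscr{C})$ norm, it suffices to prove the identity for $F$ a finite sum of homogeneous elements $J_n(g_n)$ and then pass to the closure; the resulting estimate $|\langle\mathcal{D}F,u\rangle|\le\|F\|_{L^2(\mathscr{C})}\|\delta u\|_{L^2(\mathscr{C})}$ certifies $u\in\mathrm{Dom}(\mathcal{D}^*)$.

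By bilinearity I would reduce to a single matched pair of homogeneous components. Writing $u(t)=J_p(f(t,\cdot))$ with $f(t,\cdot)\in\Lambda_p$ and expanding $F=\sum_n J_n(g_n)$, the orthogonality of chaos spaces of distinct orders, combined with the fact that $\mathcal{D}_t$ lowers the chaos degree by one whereas $\delta$ raises it by one, forces all cross terms to vanish: on both sides only the degree-$(p+1)$ component $J_{p+1}(g)$ of $F$ contributes. Thus everything comes down to the configuration governed by Proposition~\ref{anti integration by part}, namely $F=J_{p+1}(g)$ paired with $u(t)=J_p(f(t,\cdot))$.

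For this matched pair I would compute both sides using the explicit actions $\delta u=J_{p+1}(\hat f)$ and $\mathcal{D}_t F=(p+1)J_p(g(t,\cdot))$ from Definition~\ref{D and delta}, together with the isometry $\langle J_n(h),J_n(k)\rangle_{L^2(\mathscr{C})}=\langle h,k\rangle_{\Lambda_n}$ furnished by the unitary $D$ with $DJ_n(w)=w$. The left-hand side becomes $(p+1)\int\langle g(t,\cdot),f(t,\cdot)\rangle_{\Lambda_p}\,dt$, while the right-hand side becomes $\langle g,\hat f\rangle_{\Lambda_{p+1}}=\langle g,f\rangle_{\Lambda_{p+1}}$, the antisymmetrization being removable against the already antisymmetric $g$; unwinding the normalisation $\langle\cdot,\cdot\rangle_{\Lambda_n}=n!\int\overline{(\cdot)}\,(\cdot)\,d\mathbf{x}$ shows both expressions equal $(p+1)!\int\overline{g}\,f\,dt\,d\mathbf{x}$. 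Alternatively one may route the argument directly through Proposition~\ref{anti integration by part}, tracking the conjugation–reversal rule $J_n(w)^*=J_n(\overline{\overleftarrow{w}})$ and the pairing $m(J_p(h)J_p(k))=p!\int h(\overrightarrow{x_p})k(\overleftarrow{x_p})\,d\mathbf{x}$ recorded there.

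The main obstacle I anticipate is bookkeeping rather than conceptual. If the computation is routed through Proposition~\ref{anti integration by part}, the sign $(-1)^p$ (equivalently the grading operator $\beta$) and the reversal $\overleftarrow{(\cdot)}$ must be reconciled with the conjugation built into $m(\cdot^*\cdot)$, and one must check that these factors cancel to leave a clean adjoint relation with no residual grading twist — the direct route via the isometry of $D$ is attractive precisely because it sidesteps this. The only other point requiring care is confirming that the off-diagonal chaos terms vanish identically (not merely pointwise in $t$), which follows from the degree shift noted above.
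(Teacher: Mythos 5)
Your proposal is correct, and at the level of strategy it coincides with the paper's proof: both verify the duality pairing $\langle \mathcal{D}F, u\rangle_{L^2(\mathbb{R}_+)\otimes L^2(\mathscr{C})} = \langle F, \delta(u)\rangle_{L^2(\mathscr{C})}$ on chaos expansions, with orthogonality of distinct chaos orders reducing everything to the matched pair $F = J_{p+1}(g)$, $u(t) = J_p(f(t,\cdot))$ (the paper keeps the full sums $F=\sum_n J_n(f_n)$, $G_t=\sum_n J_n(g_n(t,\cdot))$ and lets orthogonality act inside the trace). Where you genuinely differ is in the execution of the matched-pair identity. The paper computes at the coordinate level: it writes $(\mathcal{D}_t F)^* = \sum_n n J_{n-1}(\overline{f_n(t,\overleftarrow{\cdot})})$ via the conjugation--reversal rule, expands the antisymmetrization inside $\delta(G)$ into its $n$ signed terms $\frac{(-1)^{i-1}}{n}g_{n-1}(x_1,\dots,x_{i-1},t,x_i,\dots,x_{n-1})$, and uses the antisymmetry of $f_n$ to show every term contributes equally, resumming to $n!\int \overline{f_n(t,\mathbf{x})}\,g_{n-1}(t,\mathbf{x})\,d\mathbf{x}\,dt$. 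You instead push both sides through the unitary $D$ (so that $\langle J_n(h), J_n(k)\rangle_{L^2(\mathscr{C})} = \langle h,k\rangle_{\Lambda_n}$) and dispose of the antisymmetrization in one stroke, by noting it is an orthogonal projection and hence removable against the already antisymmetric $g$; this is exactly the sign-and-reversal cancellation the paper performs by hand, and it explains why no residual $(-1)^p$ or $\beta$ twist appears --- the star built into $m(a^*b)$ absorbs it, whereas Proposition~\ref{anti integration by part} uses the unstarred pairing and therefore cannot avoid it. You are also more careful than the paper on the functional-analytic side: the paper never addresses the passage from finite chaos sums to general $F\in\mathrm{Dom}(\mathcal{D})$, whereas your core-plus-graph-norm-continuity argument (equivalently, the fact that the adjoint of a closable operator equals the adjoint of its closure) makes the extension statement airtight. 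Both proofs are valid; yours trades the paper's self-contained permutation bookkeeping for a shorter, less error-prone structural argument.
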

\begin{proof}
    Let $F= \sum_{n=0}^\infty J_n(f_n)$ and let $g_n(t,\cdot)\in \Lambda_n$ be such that $g_n(t,\cdot)=0$ for all but finitely many $n$. We define $G_t= \sum_{n=0}^\infty J_n(g_n(t,\cdot))$. Then
        \begin{align*}
        \langle \mathcal{D}_\cdot F, G_{\cdot} \rangle_{L^2(\mathbb{R}_+)\otimes L^2(\mathscr{C})} &= \int m\big( (\mathcal{D}_t F)^* G_t \big)\ dt\\
        &= \int m\big( (\sum_n nJ_{n-1}(\overline{f_n(t, \overleftarrow{\cdot})}) )( \sum_n J_n(g_n(t,\cdot)) ) \big)\ dt\\
        &= \sum_{n=1}^\infty n!\int \overline{f_n(t, x_1, \dots, x_{n-1})}g_{n-1}(t, x_1, \dots, x_{n-1})\ d{\bf x}dt,\\
        \langle F, \delta (G_{t}) \rangle_{L^2(\mathscr{C})} 
        &= \sum_{n=1}^\infty n! \int\overline{f_n(t, x_1, \dots, x_{n-1})}\\
        &\quad\times \big( \sum_{i=1}^n \frac{(-1)^{i-1}}{n} g_{n-1}(x_1, \dots, x_{i-1}, t, x_{i}, \dots, x_{n-1}) \big)\ d{\bf x}dt\\
        &= \sum_{n=1}^\infty \sum_{i=1}^n (n-1)! \int \overline{f_n(x_1, \dots, x_{i-1}, t, x_{i}, \dots, x_{n-1})}\\
        &\quad\times g_{n-1}(x_1, \dots, x_{i-1}, t, x_{i}, \dots, x_{n-1})\ d{\bf x}dt\\
        &= \sum_{n=1}^\infty n!\int \overline{f_n(t, x_1, \dots, x_{n-1})}g_{n-1}(t, x_1, \dots, x_{n-1})\ d{\bf x}dt.
    \end{align*}
    Consequently, the desired result is obtained.
\end{proof}
By this proposition, it follows that $\delta$ is closed. The closure of $\delta$ is also denoted by $\delta$. $\mathrm{Dom}(\delta)$ is the set of $u \in L^2(\mathbb{R}_+)\otimes L^2(\mathscr{C})$ such that 
$$
| m( \langle \mathcal{D}_{\cdot}F, u \rangle_{L^2(\mathbb{R}_+)} ) | \le c\| F \|_{L^2(\mathscr{C})}
$$
for all $F \in \mathrm{Dom}(D)$, where $c$ is some constant depending on $u$. For $n \in \mathbb{N}$ and $f \in L^2(\mathbb{R}_+^n)$, the anti-symmetrization $\hat{f}$ of $f$ is defined by 
$$
\hat{f}(t_1, \dots, t_n)= \frac{1}{n!} \sum_{\sigma \in S_n}\sgn(\sigma) f(t_{\sigma(1)}, \dots, t_{\sigma(n)}).
$$
\begin{prop}
    Let $u \in L^2(\mathbb{R}_+)\otimes L^2(\mathscr{C})$ be such that $u = \sum_{n=0}^\infty J_n(f_n(t, \cdot))$. Then $u\in \mathrm{Dom}(\delta)$ if and only if 
    $$
    \| \sum_{n=0}^{\infty} J_{n+1}(\hat{f}_n) \|^2_{L^2(\mathscr{C})} = \sum_{n=0}^\infty (n+1)! \int |\hat{f}_n|\ d{\bf x} < \infty.
    $$
    For $u\in \mathrm{Dom}(\delta)$, we have $\delta(u)=\sum_{n=0}^\infty J_{n+1}(\hat{f}_n)$.
\end{prop}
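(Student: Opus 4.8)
The plan is to exploit the fact, established in the two preceding propositions, that $\delta$ is precisely the closure of the adjoint of $\mathcal{D}$: by the stated description of $\mathrm{Dom}(\delta)$, an element $u\in L^2(\mathbb{R}_+)\otimes L^2(\mathscr{C})$ lies in $\mathrm{Dom}(\delta)$ if and only if the linear functional $F\mapsto m\big(\langle \mathcal{D}_\cdot F, u\rangle_{L^2(\mathbb{R}_+)}\big)=\int m\big((\mathcal{D}_tF)^*u(t)\big)\,dt$ is bounded on $\mathrm{Dom}(\mathcal{D})$, and in that case $\delta(u)$ is its Riesz representative. The whole argument then reduces to computing this functional chaos-by-chaos and reading off when it is bounded. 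The genuinely new computation, namely that $\delta$ sends the level-$n$ kernel $f_n$ to its full anti-symmetrization $\hat f_n$ at level $n+1$, is already contained in the proposition asserting that $\mathcal{D}^*$ extends $\delta$; the task here is the functional-analytic wrap-up.

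First I would take $F=\sum_n J_n(\phi_n)$ in the core $D^{-1}(\bigoplus_n\Lambda_n)$, so that $\mathcal{D}_tF=\sum_n nJ_{n-1}(\phi_n(t,\cdot))$. Using $m(a^*b)=\langle a,b\rangle_{L^2(\mathscr{C})}$ together with the orthogonality of the chaos decomposition, $\langle J_p(h),J_q(k)\rangle_{L^2(\mathscr{C})}=\delta_{pq}\langle h,k\rangle_{\Lambda_p}$, only the terms of matching chaos order survive, yielding
\[
\int m\big((\mathcal{D}_tF)^*u(t)\big)\,dt=\sum_{n\ge 1} n\int \langle \phi_n(t,\cdot),f_{n-1}(t,\cdot)\rangle_{\Lambda_{n-1}}\,dt=\sum_{n\ge 1}\langle \phi_n,\hat f_{n-1}\rangle_{\Lambda_n},
\]
where the last equality uses the anti-symmetry of $\phi_n$ to replace $f_{n-1}$ by its full anti-symmetrization $\hat f_{n-1}\in\Lambda_n$. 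This is exactly the identity proved in the adjoint proposition for kernels $g_n$ with only finitely many nonzero terms; to pass to a general $u$ with infinitely many chaos components I would truncate, setting $u^{(N)}=\sum_{n\le N}J_n(f_n(t,\cdot))$, apply that identity to $u^{(N)}$, and let $N\to\infty$, using $u^{(N)}\to u$ in $L^2(\mathbb{R}_+)\otimes L^2(\mathscr{C})$ with $\mathcal{D}_\cdot F$ fixed.

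With this identity in hand both implications follow quickly. If $\sum_{n}(n+1)!\int|\hat f_n|^2\,d\mathbf{x}<\infty$, then $v:=\sum_{n} J_{n+1}(\hat f_n)$ converges in $L^2(\mathscr{C})$, the functional equals $\langle F,v\rangle_{L^2(\mathscr{C})}$, hence is bounded by $\|v\|_{L^2(\mathscr{C})}$, so $u\in\mathrm{Dom}(\delta)$ and $\delta(u)=v$. Conversely, if $u\in\mathrm{Dom}(\delta)$, I would test against single-chaos elements $F=J_n(\psi)$ with $\psi\in\Lambda_n$: the computed functional gives $\langle J_n(\psi),\delta(u)\rangle_{L^2(\mathscr{C})}=\langle \psi,\hat f_{n-1}\rangle_{\Lambda_n}$ for every $\psi$, which identifies the $n$-th chaos component as $P_n\delta(u)=J_n(\hat f_{n-1})$. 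Summing, $\sum_{n}\|J_n(\hat f_{n-1})\|_{L^2(\mathscr{C})}^2=\|\delta(u)\|_{L^2(\mathscr{C})}^2<\infty$, which is the asserted summability, and simultaneously $\delta(u)=\sum_n J_{n+1}(\hat f_n)$.

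The only points requiring care, rather than a genuine obstacle, are the interchange of the infinite chaos sum with the integral in $t$ and the passage from the finite-kernel adjoint identity to arbitrary $u$; both are controlled by the orthogonality of the chaos spaces and the truncation argument above, so no new estimate beyond Bessel's inequality is needed. (I note in passing that the integrand $|\hat f_n|$ in the statement should read $|\hat f_n|^2$, consistent with $\|J_{n+1}(\hat f_n)\|_{L^2(\mathscr{C})}^2=(n+1)!\int|\hat f_n|^2\,d\mathbf{x}$.)
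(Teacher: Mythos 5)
Your proof is correct and takes essentially the same route as the paper: both treat $\delta$ as the adjoint of $\mathcal{D}$, invoke the pairing identity from the preceding proposition on the chaos core $D^{-1}(\bigoplus_n \Lambda_n)$, and use the density of that core in $\mathrm{Dom}(\mathcal{D})$ to get the boundedness $|\langle \mathcal{D}_\cdot F, u\rangle| \le \|F\|_{L^2(\mathscr{C})}\|V\|_{L^2(\mathscr{C})}$ and the identification $\delta(u)=\sum_n J_{n+1}(\hat f_n)$ (your necessity argument via testing against single-chaos elements just spells out what the paper calls ``immediate from the definition''). Your parenthetical remark that the integrand $|\hat f_n|$ in the statement should read $|\hat f_n|^2$ is also correct.
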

\begin{proof}
    The necessity is immediate from the definition. We show the sufficiency. Let $\sum_{n=0}^\infty J_{n+1}(\hat{f}_n) = V$. Then
    $$
    \langle \mathcal{D}_\cdot\sum_{n=0}^N J_n(g_n), u \rangle_{L^2(\mathbb{R}_+)\otimes L^2(\mathscr{C})} = \langle \sum_{n=0}^N J_n(g_n), \delta(u) \rangle_{L^2(\mathbb{R}_+)\otimes L^2(\mathscr{C})}
    $$
    for all $N\ge 0$. Therefore, by the density of $D^{-1}(\bigoplus_n \Lambda_n)$, 
    $$
    | \langle \mathcal{D}_\cdot F, u \rangle_{L^2(\mathbb{R}_+)\otimes L^2(\mathscr{C})} | \le \| F \|_{L^2(\mathscr{C})} \| V \|_{L^2(\mathscr{C})}
    $$
    for any $F\in \mathrm{Dom}(\mathcal{D})$. This implies that $u\in \mathrm{Dom}(\delta)$ and $V=\delta(u)$.
\end{proof}
Next, we discuss the relationship between $\mathcal{D},\ \delta$, and conditional expectations. Let $A$ be a Borel set in $\mathbb{R}_+$, and let $\mathscr{C}_A$ denote the $W^*$-subalgebra generated by $\{ \Psi(u) \mid u \in L^2(\mathbb{R}_+), \supp(u)\subset A \}$.
\begin{lem}\label{projection}
Let $F\in L^2(\mathscr{C})$ be expressed as $F= \sum_{n=0}^\infty J_n(f_n)$. Then 
$$
    m(F|\mathscr{C}_A) = \sum_{n=0}^\infty J_n(f_n\mathbbm{1}_A^{\otimes n}).
    $$
\end{lem}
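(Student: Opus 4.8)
The plan is to identify $m(\cdot\,|\,\mathscr{C}_A)$, restricted to $L^2(\mathscr{C})$, with the orthogonal projection onto the closed subspace $L^2(\mathscr{C}_A)$, and then to show that this projection acts on each chaos by multiplication by $\mathbbm{1}_A^{\otimes n}$. First I would note that the candidate $\tilde F \coloneq \sum_{n=0}^\infty J_n(f_n\mathbbm{1}_A^{\otimes n})$ is well defined in $L^2(\mathscr{C})$: since $\mathbbm{1}_A^{\otimes n}$ is symmetric we still have $f_n\mathbbm{1}_A^{\otimes n}\in\Lambda_n$, and $\|J_n(f_n\mathbbm{1}_A^{\otimes n})\|_{L^2(\mathscr{C})}\le\|J_n(f_n)\|_{L^2(\mathscr{C})}$, so the series converges. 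In the present tracial setting ($m$ is faithful and central) it is standard that the $L^2$-conditional expectation $m(\cdot\,|\,\mathscr{C}_A)$ is exactly the orthogonal projection of $L^2(\mathscr{C})$ onto $L^2(\mathscr{C}_A)$: property (2) identifies its range with $L^2(\mathscr{C}_A)$, while property (1), together with the fact that $m(\cdot\,|\,\mathscr{C}_A)$ is a $*$-map (a consequence of positivity), yields $\langle m(u\,|\,\mathscr{C}_A),w\rangle_{L^2(\mathscr{C})}=\langle u,w\rangle_{L^2(\mathscr{C})}$ for every $w\in L^2(\mathscr{C}_A)$, which characterizes the orthogonal projection. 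It therefore suffices to prove that $\tilde F\in L^2(\mathscr{C}_A)$ and $F-\tilde F\perp L^2(\mathscr{C}_A)$.

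The heart of the argument is the chaos description
\[
L^2(\mathscr{C}_A)=\overline{\mathrm{span}}\bigl\{\,J_n(g_n)\ \big|\ n\ge 0,\ g_n\in\Lambda_n,\ g_n=g_n\mathbbm{1}_A^{\otimes n}\,\bigr\},
\]
that is, under the unitary $D$ the subspace $L^2(\mathscr{C}_A)$ is $\bigoplus_n\Lambda_n(L^2(A))$, the anti-symmetric functions supported on the sets $A^n$. For the inclusion $\supseteq$ I would use that when $\supp u_i\subset A$ the Wick product $:\Psi(u_1)\cdots\Psi(u_n):$ is a constant multiple of $J_n(u_1\wedge\cdots\wedge u_n)$ and is a polynomial in the generators of $\mathscr{C}_A$, hence lies in $\mathscr{C}_A\subset L^2(\mathscr{C}_A)$; since such simple wedges span $\Lambda_n(L^2(A))$, taking closures gives $\supseteq$. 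For the reverse inclusion $\subseteq$ I would observe that each generator $\Psi(u)=b^*(u)+b(J(u))$ with $\supp u\subset A$ preserves the subspace $\bigoplus_n\Lambda_n(L^2(A))$ of $\Gamma_a$ (the creation part wedges with $u\in L^2(A)$, the annihilation part contracts against $J(u)\in L^2(A)$). Consequently every $*$-polynomial in these generators maps $\Omega$ into $\bigoplus_n\Lambda_n(L^2(A))$; since such polynomials are weak-operator dense in $\mathscr{C}_A$ and $\bigoplus_n\Lambda_n(L^2(A))$ is a weakly closed subspace of $\Gamma_a$, the corresponding weak-operator limits applied to $\Omega$ stay inside it, so $\mathscr{C}_A\Omega\subset\bigoplus_n\Lambda_n(L^2(A))$ and hence $L^2(\mathscr{C}_A)=\overline{\mathscr{C}_A\Omega}\subset\bigoplus_n\Lambda_n(L^2(A))$.

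With this description the two projection conditions become routine support computations. Each component $f_n\mathbbm{1}_A^{\otimes n}$ of $\tilde F$ is supported on $A^n$, so $\tilde F\in L^2(\mathscr{C}_A)$. For orthogonality, write $F-\tilde F=\sum_n J_n\bigl(f_n\mathbbm{1}_{(A^n)^c}\bigr)$ and test it against an arbitrary $J_m(g_m)$ with $g_m$ supported on $A^m$; by orthogonality of distinct chaoses and the inner-product formula $\langle J_n(h),J_n(k)\rangle_{L^2(\mathscr{C})}=n!\int\overline{h}\,k\,d\mathbf{x}$ (up to the standard coordinate reversal), only the $n=m$ term survives, and its integrand contains the product $\mathbbm{1}_{(A^m)^c}\,g_m$, which vanishes a.e.\ because $A^m$ is permutation-invariant while $g_m$ vanishes off $A^m$. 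Hence $F-\tilde F\perp L^2(\mathscr{C}_A)$, and therefore $m(F\,|\,\mathscr{C}_A)=\tilde F$, as claimed.

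The main obstacle I anticipate is the inclusion $\subseteq$ in the chaos description of $L^2(\mathscr{C}_A)$, namely showing that passing to the von Neumann (weak) closure of the polynomials in $\{\Psi(u)\mid\supp u\subset A\}$ produces no chaos mass outside the sets $A^n$. Everything else is either a direct support computation or an appeal to the standard identification of the $L^2$-conditional expectation with the orthogonal projection in a tracial probability gauge space.
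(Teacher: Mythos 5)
Your proposal is correct and takes essentially the same approach as the paper: the paper's entire proof is the one-line assertion that $m(\cdot\,|\,\mathscr{C}_A)$ and $D^{-1}\Gamma(\mathbbm{1}_A)D$ are both the orthogonal projection of $L^2(\mathscr{C})$ onto $L^2(\mathscr{C}_A)$, and your argument supplies exactly the details behind that assertion (conditional expectation $=$ orthogonal projection in the tracial setting, plus the identification of $D\,L^2(\mathscr{C}_A)$ with $\bigoplus_n\Lambda_n(L^2(A))$, on which $\Gamma(\mathbbm{1}_A)$ acts by multiplication by $\mathbbm{1}_A^{\otimes n}$).
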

\begin{proof}
    The conditional expectation $m(\cdot|\mathscr{C}_A)$ and $D^{-1}\Gamma(\mathbbm{1}_A)D$ act as a projection from $L^2(\mathscr{C})$ to $L^2(\mathscr{C}_A)$.
\end{proof}
\begin{prop}\label{conditonal}
    Let $F\in \mathrm{Dom}(\mathcal{D})$ and let $A$ be a Borel set. Then $m(F|\mathscr{C}_A)\in \mathrm{Dom}(\mathcal{D})$, and 
    $$
    \mathcal{D}_t(m(F|\mathscr{C}_A))= m(\mathcal{D}_t F| \mathscr{C}_A)\mathbbm{1}_A(t).
    $$
\end{prop}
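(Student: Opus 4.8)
The plan is to reduce everything to the chaos expansion and then to the two facts already established: Lemma~\ref{projection}, which computes the conditional expectation $m(\cdot\mid\mathscr{C}_A)$ as multiplication of the $n$-th chaos coefficient by $\mathbbm{1}_A^{\otimes n}$, and the definition $\mathcal{D}_t(J_n(v_n))=nJ_{n-1}(v_n(t,\cdot))$ from Definition~\ref{D and delta}. Writing $F=\sum_{n=0}^\infty J_n(f_n)$, Lemma~\ref{projection} gives $m(F\mid\mathscr{C}_A)=\sum_{n=0}^\infty J_n(f_n\mathbbm{1}_A^{\otimes n})$, so the whole statement becomes a matter of comparing two term-by-term series.

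The key computation I would carry out is the factorization of the indicator. For each $n\ge 1$ and a.e.\ $t$, the function $(f_n\mathbbm{1}_A^{\otimes n})(t,\cdot)$ equals $\mathbbm{1}_A(t)\,f_n(t,\cdot)\,\mathbbm{1}_A^{\otimes(n-1)}$, since the indicator $\mathbbm{1}_A^{\otimes n}$ splits into the factor $\mathbbm{1}_A(t)$ (depending only on the first variable) times $\mathbbm{1}_A^{\otimes(n-1)}$ in the remaining variables. Applying $\mathcal{D}_t$ to $m(F\mid\mathscr{C}_A)$ and using the definition then yields
\begin{align*}
\mathcal{D}_t\big(m(F\mid\mathscr{C}_A)\big)
&=\sum_{n=1}^\infty nJ_{n-1}\big((f_n\mathbbm{1}_A^{\otimes n})(t,\cdot)\big)\\
&=\mathbbm{1}_A(t)\sum_{n=1}^\infty nJ_{n-1}\big(f_n(t,\cdot)\,\mathbbm{1}_A^{\otimes(n-1)}\big).
\end{align*}
On the other side, $\mathcal{D}_tF=\sum_{n=1}^\infty nJ_{n-1}(f_n(t,\cdot))$, and applying Lemma~\ref{projection} to the fixed-$t$ chaos expansion of $\mathcal{D}_tF$ gives $m(\mathcal{D}_tF\mid\mathscr{C}_A)=\sum_{n=1}^\infty nJ_{n-1}(f_n(t,\cdot)\,\mathbbm{1}_A^{\otimes(n-1)})$. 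Multiplying by $\mathbbm{1}_A(t)$ reproduces exactly the right-hand side above, which is the claimed identity.

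It remains to justify that $m(F\mid\mathscr{C}_A)\in\mathrm{Dom}(\mathcal{D})$, and I would do this via the norm characterization of the domain: the coefficient of $m(F\mid\mathscr{C}_A)$ in the $n$-th chaos is $f_n\mathbbm{1}_A^{\otimes n}$, so $\sum_n n\cdot n!\int|f_n\mathbbm{1}_A^{\otimes n}|^2\,d\mathbf{x}\le\sum_n n\cdot n!\int|f_n|^2\,d\mathbf{x}<\infty$ because $F\in\mathrm{Dom}(\mathcal{D})$ and the indicator only decreases each integral. I do not expect a genuine obstacle here: the argument is essentially bookkeeping, and the only point requiring a little care is the interchange of $\mathcal{D}_t$ with the infinite sum, which is licensed by the domain bound just verified together with the orthogonality of distinct chaoses, so that convergence holds in $L^2(\mathbb{R}_+)\otimes L^2(\mathscr{C})$ and the term-by-term manipulation is rigorous.
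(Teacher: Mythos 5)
Your proposal is correct and follows essentially the same route as the paper's proof: both apply Lemma~\ref{projection} to the chaos expansion, factor $\mathbbm{1}_A^{\otimes n}$ as $\mathbbm{1}_A(t)\,\mathbbm{1}_A^{\otimes(n-1)}$ after applying $\mathcal{D}_t$, and recognize the result as $m(\mathcal{D}_tF|\mathscr{C}_A)\mathbbm{1}_A(t)$. The only difference is that you spell out the domain membership and the term-by-term convergence, which the paper leaves implicit.
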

\begin{proof}
    By using Lemma~\ref{projection},
    $$
        \mathcal{D}_t(m(F|\mathscr{C}_A))= \sum_{n=1}^\infty nJ_{n-1}(f_n(t, \cdot)\mathbbm{1}_A^{\otimes n-1})\mathbbm{1}_A(t) = m(\mathcal{D}_t F|\mathscr{C}_A)\mathbbm{1}_A(t).
    $$
\end{proof}
\begin{lem}\label{delta}
    Let $A\in\mathcal{B}_0$, and let $F\in L^2(\mathscr{C}_{A^c})$. Then $\mathbbm{1}_A F \in \mathrm{Dom}(\delta)$, and 
    \begin{align}
    \delta(\mathbbm{1}_A F) = \Psi(\mathbbm{1}_A)F.\label{4.12}
   \end{align}
\end{lem}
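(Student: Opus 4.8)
The plan is to reduce everything to the product formula of Proposition~\ref{anti-multiple formula} together with the explicit chaos description of $\delta$. First I would record the elementary identity $\Psi(\mathbbm{1}_A) = J_1(\mathbbm{1}_A)$: since $b(J\mathbbm{1}_A)$ annihilates the vacuum we have $\Psi(\mathbbm{1}_A)\Omega = \mathbbm{1}_A = J_1(\mathbbm{1}_A)\Omega$, so multiplication by $\Psi(\mathbbm{1}_A)$ is exactly multiplication by the first-chaos element $J_1(\mathbbm{1}_A)$. Next, because $F \in L^2(\mathscr{C}_{A^c})$ we have $F = m(F\mid\mathscr{C}_{A^c})$, so Lemma~\ref{projection} lets me write $F = \sum_{n=0}^\infty J_n(f_n)$ with $f_n = f_n\,\mathbbm{1}_{A^c}^{\otimes n}$; that is, each $f_n$ is supported in $A^c$ in every variable. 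The process $\mathbbm{1}_A F$ is then the element $u = \sum_n J_n(g_n(t,\cdot))$ of $L^2(\mathbb{R}_+)\otimes L^2(\mathscr{C})$ with $g_n(t,x_1,\dots,x_n) = \mathbbm{1}_A(t) f_n(x_1,\dots,x_n)$, and it genuinely lies in $L^2(\mathbb{R}_+)\otimes L^2(\mathscr{C})$ since $\mu(A)<\infty$.

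The heart of the argument is to compute $\Psi(\mathbbm{1}_A)F$ term by term. Applying Proposition~\ref{anti-multiple formula} with $p=1$, $q=n$ gives
$$
J_1(\mathbbm{1}_A) J_n(f_n) = J_{n+1}(\mathbbm{1}_A \hat{\wedge} f_n) + n\,J_{n-1}(\mathbbm{1}_A \hat{\wedge}_1 f_n).
$$
The contraction term is $\mathbbm{1}_A \wedge_1 f_n = \int \mathbbm{1}_A(s) f_n(s,t_1,\dots,t_{n-1})\,ds$, and this vanishes identically because the integrand carries the factor $\mathbbm{1}_A(s)\mathbbm{1}_{A^c}(s)=0$ forced by the support of $f_n$. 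This is precisely the point at which the hypothesis $F\in L^2(\mathscr{C}_{A^c})$ enters. The surviving term $\mathbbm{1}_A \hat{\wedge} f_n$ is the full anti-symmetrization of $\mathbbm{1}_A(t) f_n(x_1,\dots,x_n)$, which is exactly $\hat{g}_n$, so $J_1(\mathbbm{1}_A) J_n(f_n) = J_{n+1}(\hat{g}_n)$.

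Finally I would assemble the pieces. Summing over $n$, the partial sums obey $\sum_{n=0}^N J_{n+1}(\hat{g}_n) = \Psi(\mathbbm{1}_A)\sum_{n=0}^N J_n(f_n)$; since $\Psi(\mathbbm{1}_A)\in\mathscr{C}$ is bounded, left multiplication by it is continuous on $L^2(\mathscr{C})$, hence these partial sums converge to $\Psi(\mathbbm{1}_A)F$ as $N\to\infty$. As the $J_{n+1}(\hat{g}_n)$ live in mutually orthogonal chaos, this convergence is equivalent to $\sum_n (n+1)!\int|\hat{g}_n|^2\,d\mathbf{x} < \infty$, which is exactly the criterion of the preceding proposition for $u\in\mathrm{Dom}(\delta)$; it simultaneously identifies $\delta(\mathbbm{1}_A F) = \sum_n J_{n+1}(\hat{g}_n) = \Psi(\mathbbm{1}_A)F$, proving~(\ref{4.12}). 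I expect the only genuinely delicate step to be the vanishing of the contraction term, which must be read off carefully from the support condition on $f_n$; once that is secured, membership in $\mathrm{Dom}(\delta)$ follows automatically from the boundedness of $\Psi(\mathbbm{1}_A)$, with no separate estimate required.
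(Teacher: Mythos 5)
Your argument is correct, but it takes a genuinely different route from the paper's, and the comparison is instructive. The paper proceeds in three stages: (a) it shows $\mathbbm{1}_A F\in\mathrm{Dom}(\delta)$ by expanding $\sum_{n}(n+1)!\int|\widehat{\mathbbm{1}_A f_n}|^2\,d\mathbf{x}$ as a double sum over permutations and using $\supp(f_n)\subset \Pi^n A^c$ to discard all pairs with $\sigma(1)\neq\tau(1)$, arriving at the explicit bound $\mu(A)\|F\|^2_{L^2(\mathscr{C})}$; (b) for $F\in\mathrm{Dom}(\mathcal{D})$ it writes $\delta(\mathbbm{1}_A F)=\Psi(\mathbbm{1}_A)F-\int\mathbbm{1}_A(s)\mathcal{D}_s(F)\,ds$ and invokes Proposition~\ref{conditonal} to conclude that $\mathbbm{1}_A(s)\mathcal{D}_s(F)=0$; (c) it removes the assumption $F\in\mathrm{Dom}(\mathcal{D})$ by density and the closedness of $\delta$. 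You bypass (b) and (c) entirely: applying Proposition~\ref{anti-multiple formula} with $p=1$ termwise, the support condition kills the contraction $\mathbbm{1}_A\wedge_1 f_n$ --- the same cancellation the paper expresses once as the vanishing of $\mathbbm{1}_A(s)\mathcal{D}_s F$ and once, in the guise of diagonal-only permutation pairs, in its norm estimate --- so $J_1(\mathbbm{1}_A)J_n(f_n)=J_{n+1}(\hat g_n)$ with no error term; then the boundedness of $\Psi(\mathbbm{1}_A)$ (a specifically fermionic feature: the CAR give $\Psi(\mathbbm{1}_A)^2=\mu(A)$, hence operator norm $\mu(A)^{1/2}$) together with orthogonality of distinct chaoses delivers domain membership and the identity simultaneously via the chaos characterization of $\mathrm{Dom}(\delta)$. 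Your route is shorter, needs neither Proposition~\ref{conditonal} nor a closure argument, and makes transparent that the domain statement is automatic once the multiplier is bounded --- an argument with no bosonic analogue, since there the field operators are unbounded; it also silently recovers the paper's estimate, because $\sum_n(n+1)!\int|\hat g_n|^2\,d\mathbf{x}=\|\Psi(\mathbbm{1}_A)F\|^2_{L^2(\mathscr{C})}\le\mu(A)\|F\|^2_{L^2(\mathscr{C})}$. What the paper's longer route buys is a proof template that parallels the classical Malliavin-calculus argument (quantitative estimate, smooth case, closure) and that exercises the conditional-expectation calculus developed just before the lemma.
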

\begin{proof}
Let $F= \sum_{n=0}^\infty J_n(f_n)$. First we prove $\mathbbm{1}_A F\in \mathrm{Dom}(\mathcal{D})$. Observe that
\begin{align*}
        \sum_{n=0}^\infty (n+1)! \int |\widehat{\mathbbm{1}_A f_n}|^2\ d{\bf x}
        &= \sum_{n=0}^\infty (n+1)!\int \sum_{\sigma, \tau\in S_{n+1}} \frac{1}{((n+1)!)^2} \sgn(\sigma) \sgn(\tau) \mathbbm{1}_A(x_{\sigma(1)})\\
        &\quad\times \overline{f_n( x_{\sigma(2)}, \dots, x_{\sigma(n+1)} )}
        \mathbbm{1}_A(x_{\tau(1)})f_n( x_{\tau(2)}, \dots, x_{\tau(n+1)} )\ d{\bf x}\\
        &= \sum_{n=0}^\infty n! \int \sum_{\substack{\sigma, \tau \in S_{n+1} \\ \sigma(1)=\tau(1) }} \sgn(\sigma)\sgn(\tau) \mu(A) \frac{1}{n+1}\frac{1}{(n!)^2}\\
        &\quad\times \overline{f_n( x_{\sigma(2)}, \dots, x_{\sigma(n+1)} )}f_n( x_{\tau(2)}, \dots, x_{\tau(n+1)} )\ d{\bf x},
\end{align*}
where we used the fact that $\mathrm{supp}(f_n)\subset \Pi^n A^c$. The last expression is seen to be equal to
\begin{align*}
        &\sum_{n=0}^\infty n! \int \sum_{\sigma, \tau \in S_n } \sgn(\sigma)\sgn(\tau) \mu(A) \frac{1}{(n!)^2}\overline{f_n( x_{\sigma(1)}, \dots, x_{\sigma(n)} )}f_n( x_{\tau(1)}, \dots, x_{\tau(n)} )\ d{\bf x}\\
        &= \mu(A)\sum_{n=0}^\infty n! \int |f_n|^2\ d{\bf x}\\
        &= \mu(A)\| F \|_{L^2(\mathscr{C})}.
    \end{align*}
The above observations imply $\mathbbm{1}_A F\in \mathrm{Dom}(\mathcal{D})$.
To show formula~(\ref{4.12}), let $F\in \mathrm{Dom}(\mathcal{D})$.
Note that $\delta(\mathbbm{1}_A F) = \Psi(\mathbbm{1}_A)F - \int \mathbbm{1}_A(s)\mathcal{D}_s(F)\ ds$. By using Proposition~\ref{conditonal}, we have $ \mathcal{D}_tF = \mathcal{D}_t(m(F|\mathscr{C}_{A^c}))= m(\mathcal{D}_t F| \mathscr{C}_{A^c})\mathbbm{1}_{A^c}(t)$, and hence (\ref{4.12}) follows. The general case follows from the density of $\mathrm{Dom}(\mathcal{D})$ and the fact that $\delta$ is closed.
\end{proof}
\begin{prop}\label{coinside with Ito-Clifford}
    $\mathfrak{H}[0,1] \subset \mathrm{Dom}(\delta)$, and the operator $\delta$ restricted to $\mathfrak{H}[0,1]$ coincides with the It\^{o}-Clifford integral:
     $$
    \delta(u) = \int_0^1 d\Psi_t u_t
    $$
    for every $u\in\mathfrak{H}[0,1]$.
\end{prop}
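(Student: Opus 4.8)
The plan is to first establish the identity on simple adapted processes, where everything reduces to Lemma~\ref{delta}, and then to extend it to all of $\mathfrak{H}[0,1]$ using the density of simple processes together with the closedness of $\delta$.

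\emph{Step 1 (simple processes).} Let $h = \sum_{k=1}^n h_{k-1}\mathbbm{1}_{[t_{k-1},t_k)}$ be a simple adapted process, so that $h_{k-1}\in L^2(\mathscr{C}_{t_{k-1}})$. Put $A_k = [t_{k-1},t_k)\in\mathcal{B}_0$. Since a single point is Lebesgue-null, $L^2(\mathscr{C}_{t_{k-1}}) = L^2(\mathscr{C}_{[0,t_{k-1})})$, and $[0,t_{k-1})\subset A_k^c$, so $h_{k-1}\in L^2(\mathscr{C}_{A_k^c})$. Lemma~\ref{delta} therefore applies to each summand and gives $\mathbbm{1}_{A_k}h_{k-1}\in\mathrm{Dom}(\delta)$ with $\delta(\mathbbm{1}_{A_k}h_{k-1}) = \Psi(\mathbbm{1}_{A_k})h_{k-1}$. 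By linearity of $\Psi$ together with $\mathbbm{1}_{[t_{k-1},t_k)} = \mathbbm{1}_{[0,t_k]} - \mathbbm{1}_{[0,t_{k-1}]}$ (again up to null sets), one has $\Psi(\mathbbm{1}_{A_k}) = \Psi_{t_k} - \Psi_{t_{k-1}}$. Summing over $k$ and invoking the linearity of $\delta$ yields
\[
\delta(h) = \sum_{k=1}^n (\Psi_{t_k}-\Psi_{t_{k-1}})h_{k-1},
\]
which is exactly the It\^o--Clifford integral $\int_0^1 d\Psi_t\, h_t$ of the simple process $h$.

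\emph{Step 2 (extension).} Let $u\in\mathfrak{H}[0,1]$. By the construction of the It\^o--Clifford integral in \cite{BSW1}, there is a sequence $(u^{(m)})$ of simple adapted processes with $u^{(m)}\to u$ in $L^2([0,1];L^2(\mathscr{C}))$ and such that the corresponding integrals converge, $\int_0^1 d\Psi_t\, u^{(m)}_t \to \int_0^1 d\Psi_t\, u_t$ in $L^2(\mathscr{C})$. By Step~1, $\delta(u^{(m)}) = \int_0^1 d\Psi_t\, u^{(m)}_t$ for each $m$. Thus $u^{(m)}\to u$ while $\delta(u^{(m)})$ converges to $\int_0^1 d\Psi_t\, u_t$. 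Since $\delta$ is closed, this forces $u\in\mathrm{Dom}(\delta)$ and $\delta(u) = \int_0^1 d\Psi_t\, u_t$, proving both assertions simultaneously.

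The only genuinely delicate point is in Step~1: one must verify that the adaptedness hypothesis $h_{k-1}\in L^2(\mathscr{C}_{t_{k-1}})$ indeed places $h_{k-1}$ in $L^2(\mathscr{C}_{A_k^c})$ as required by Lemma~\ref{delta}, which hinges on the harmless fact that the boundary points $\{t_{k-1}\}$ carry no Lebesgue mass and hence do not affect the generated $W^*$-subalgebras. Everything else is bookkeeping: the identification $\Psi(\mathbbm{1}_{A_k}) = \Psi_{t_k}-\Psi_{t_{k-1}}$ is immediate from the linearity of $\Psi$, and the passage to general $u$ is a standard closed-operator argument. Thus the real content of the proposition is already encoded in Lemma~\ref{delta}.
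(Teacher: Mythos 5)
Your proof is correct and takes essentially the same approach as the paper: apply Lemma~\ref{delta} summand-by-summand to a simple adapted process to obtain $\delta(h)=\sum_{k}(\Psi_{t_k}-\Psi_{t_{k-1}})h_{k-1}$, then pass to general $u\in\mathfrak{H}[0,1]$ via the density of simple processes, the convergence of their It\^o--Clifford integrals, and the closedness of $\delta$. Your explicit check that adaptedness places $h_{k-1}$ in $L^2(\mathscr{C}_{A_k^c})$ (boundary points being Lebesgue-null) is a detail the paper leaves implicit, but it is the same argument.
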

\begin{proof}
    Note that the set of simple adapted processes is dense in $\mathfrak{H}[0,1]$ {\cite[Thm~3.9]{BSW1}}. Let $u$ be a simple adapted process such that $u_t = \sum_{j=1}^n F_j \mathbbm{1}_{[t_j, t_{j+1} )}(t)$, where $F_j\in L^2(\mathscr{C}_{t_j})$, $0\le t_1 < \cdots < t_{n+1} \le 1$. By Lemma~\ref{delta}, $u\in \mathrm{Dom}(\delta)$ and
     $$
    \delta(u) = \sum_{j=1}^n (\Psi_{t_{j+1}}-\Psi_{t_j})F_j=\int_0^1 d\Psi_t u_t.
    $$
    For $u\in \mathfrak{H}[0,1]$, consider a sequence of simple adapted processes $u^n$ converging to $u$. Since $\int_0^1 d\Psi_t u^n_t$ converges to $\int_0^1 d\Psi_t u_t$ in $\mathfrak{H}[0,1]$ {\cite[Thm~3.10, Def.~3.11]{BSW1}}, and $\delta$ is closed, it follows that $\delta(u)=\int_0^1 d\Psi_t u_t$.
\end{proof}
\begin{thm}[The anti-symmetric Clark-Ocone formula]\label{The anti-symmetric Clark-Ocone formula}
    Let $F\in \mathrm{Dom}(\mathcal{D})$. Then 
    $$
    F=m(F)+\int_0^1 d\Psi_t m(\mathcal{D}_tF|\mathscr{C}_t).
    $$
\end{thm}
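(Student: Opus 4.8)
The plan is to mirror the classical derivation of the Clark--Ocone formula recalled in Section~\ref{subsection_Malliavin}, using the chaos expansion, the action of $\mathcal{D}_t$, the projection formula for conditional expectations (Lemma~\ref{projection}), the explicit divergence formula, and the identification of $\delta$ with the It\^o--Clifford integral on adapted processes (Proposition~\ref{coinside with Ito-Clifford}). As in the classical statement, I would work under the convention $\mathcal{H}=L^2([0,1])$ (equivalently $F\in L^2(\mathscr{C}_1)$), so that the chaos functions $f_n$ of $F=\sum_{n=0}^\infty J_n(f_n)$ are supported in $[0,1]^n$ and $J_0(f_0)=m(F)$.

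First I would compute $\mathcal{D}_tF=\sum_{n\ge1}nJ_{n-1}(f_n(t,\cdot))$ and then apply Lemma~\ref{projection} with $A=[0,t]$ to obtain, for each fixed $t\in[0,1]$,
$$u_t:=m(\mathcal{D}_tF\mid\mathscr{C}_t)=\sum_{n\ge1}nJ_{n-1}\big(f_n(t,\cdot)\mathbbm{1}_{[0,t]^{n-1}}\big).$$
Since each summand lies in $L^2(\mathscr{C}_t)$, the process $u$ is adapted; and since $m(\cdot\mid\mathscr{C}_t)$ is an $L^2$-contraction, $\int_0^1\|u_t\|^2_{L^2(\mathscr{C})}\,dt\le\|\mathcal{D}_\cdot F\|^2_{L^2(\mathbb{R}_+)\otimes L^2(\mathscr{C})}<\infty$, so $u\in\mathfrak{H}[0,1]$. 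By Proposition~\ref{coinside with Ito-Clifford} this already yields $u\in\mathrm{Dom}(\delta)$ together with $\delta(u)=\int_0^1 d\Psi_t\,u_t$, so it remains only to identify $\delta(u)$.

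The core computation is to evaluate $\delta(u)$ through the explicit divergence formula $\delta(u)=\sum_{m\ge0}J_{m+1}(\hat g_m)$, where, re-indexing $n=m+1$, the coefficient is $g_m(t,x_1,\dots,x_m)=(m+1)f_{m+1}(t,x_1,\dots,x_m)\prod_{i=1}^m\mathbbm{1}_{[0,t]}(x_i)$. Writing the $m+1$ variables as $(y_0,\dots,y_m)$ and using that $f_{m+1}$ is anti-symmetric, every permuted summand picks up $\sgn(\sigma)^2=1$, while the indicator $\prod_{i\ge1}\mathbbm{1}_{[0,y_{\sigma(0)}]}(y_{\sigma(i)})$ is nonzero exactly when $y_{\sigma(0)}$ is the largest coordinate; for almost every point only the $m!$ permutations placing the arg-max in position $0$ survive, so the $(m+1)!$ in the anti-symmetrization cancels against $(m+1)\cdot m!$ and $\hat g_m=f_{m+1}$. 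Hence $\delta(u)=\sum_{n\ge1}J_n(f_n)=F-m(F)$, and combining this with the previous paragraph gives the desired identity. The convergence $\sum_m(m+1)!\int|\hat g_m|^2\,d{\bf x}=\sum_{n\ge1}\|J_n(f_n)\|^2_{L^2(\mathscr{C})}\le\|F\|^2_{L^2(\mathscr{C})}$ reconfirms $u\in\mathrm{Dom}(\delta)$.

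I expect the anti-symmetrization identity $\hat g_m=f_{m+1}$ to be the main obstacle: it is the fermionic analogue of the ``time ordering picks out the last variable'' step in the classical proof, and the care lies in checking that the signs coming from anti-symmetry cancel exactly and that the indicator constraints collapse the permutation sum to the single arg-max contribution. This is also precisely where the standing assumption $F\in L^2(\mathscr{C}_1)$ enters, ensuring $\hat g_m=f_{m+1}$ on all of $\mathbb{R}_+^{m+1}$ rather than merely its restriction to $[0,1]^{m+1}$. Everything else — the action of $\mathcal{D}_t$, the adaptedness of $u$, and the $L^2$ bounds — is routine given the earlier propositions.
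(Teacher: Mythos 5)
Your proposal is correct and follows essentially the same route as the paper's proof: chaos expansion plus Lemma~\ref{projection} to write $m(\mathcal{D}_tF|\mathscr{C}_t)$ explicitly, the observation that anti-symmetrizing $f_n(t,\cdot)\mathbbm{1}_{[0,t]^{n-1}}$ recovers $f_n$ (your arg-max cancellation is exactly the paper's identity $\sum_{i=1}^p \mathbbm{1}_{[0,x_i]^{p-1}}(x_1,\overset{i}{\Check{\dots}},x_p)=1$ a.e.), and Proposition~\ref{coinside with Ito-Clifford} to identify $\delta$ with the It\^o--Clifford integral. The only cosmetic difference is that you apply the chaos-expansion formula for $\delta$ to the whole series at once (checking the domain condition directly), whereas the paper computes $\delta$ chaos-by-chaos and then sums.
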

\begin{proof}
    For every $p \in \mathbb{N}$, let $f \in \Lambda_p$ be such that $\mathrm{supp}(f)\subset [0,1]^p$. Since  $\sum_{i=1}^p \mathbbm{1}_{[0,x_i]^{p-1}}(x_1, \overset{i}{\Check{\dots}}, x_p) = 1$ for $\mathrm{a.e.}\ {\bf x}$ for $p\ge 2$, by defining $\mathbbm{1}_{[0,x_1]^{0}} \coloneq 1$, we have
    \begin{align*}
        \delta\big( m(\mathcal{D}_t(J_p(f))|\mathscr{C}_t) \big)
        &= p\int \frac{1}{p}\sum_{i=1}^p (-1)^{i-1}f(x_i, x_1, \overset{i}{\Check{\dots}}, x_p)\mathbbm{1}_{[0,x_i]^{p-1}}(x_1, \overset{i}{\Check{\dots}}, x_p):\psi:d{\bf x}\\
        &= \int f(x_1, \dots, x_p)\sum_{i=1}^p \mathbbm{1}_{[0,x_i]^{p-1}}(x_1, \overset{i}{\Check{\dots}}, x_p) :\psi:d{\bf x}\\
&=J_p(f).
    \end{align*}
    For $p=0$, we have $\delta\big( m(\mathcal{D}_t(J_0(f))|\mathscr{C}_t) \big)=0$.
    Thus, for $F=\sum_{p=0}^\infty J_p(f_p)\in \mathrm{Dom}(\mathcal{D})$, by applying Proposition~\ref{coinside with Ito-Clifford}, we obtain
    $$
    \int_0^1 d\Psi_t m(\mathcal{D}_tF|\mathscr{C}_t) =\sum_{p=0}^\infty \delta\big( m(\mathcal{D}_t(J_p(f_p))|\mathscr{C}_t) \big) = \sum_{p=1}^\infty J_p(f_p) = F-m(F).
    $$
\end{proof}

% 4.2
\subsection{Applications}\label{subsection_application}
In this subsection, we deal with applications of Malliavin calculus on the Clifford algebra, specifically focusing on a concentration inequality, logarithmic Sobolev inequality, and the fourth-moment theorem. For concentration inequalities, we obtain results similar to those in \cite{HP02}, which do not demand the use of the Leibniz rule. For the logarithmic Sobolev inequality, only a limited result is obtained. Regarding the fourth-moment theorem, we show that there is the property differing from that of the Brownian motion on the Clifford algebra. Throughout this subsection, We use $ \| \cdot \|_p $ for $ \| \cdot \|_{L^2(\mathbb{R}_+^p)} $. We will begin by presenting the results related to concentration inequalities.
\begin{thm}[cf. {\cite[Lem.~3.1, Prop.~3.3]{HP02}}\label{HP02 ineq.} The concentration inequality]\label{concentration}
    Let  $F\in \mathrm{Dom}(\mathcal{D})\cap L^\infty (\mathscr{C})$ be a self-adjoint operator such that $\int \| \mathcal{D}_tF \|^2_\infty\ dt < \infty$. The spectral decomposition is written as $F=\int_{-\infty}^\infty \lambda\ dE_\lambda$. If $$h(s)=\int\| \mathcal{D}_tF \|_{L^\infty(\mathscr{C})} \| e^{-sF} \mathcal{D}_t e^{sF}\|_{L^\infty(\mathscr{C})}\ dt$$ is a monotonic function on $[0,\infty)$, then
    \begin{equation}\label{m ineq}
    m\left( E([ m(F)+x, \infty )) \right)\le \exp\left(-\int_0^x h^{-1}(s)\ ds\right)
    \end{equation}
    for every $x\ge 0$.
\end{thm}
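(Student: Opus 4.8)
The plan is to run the Herbst-type argument adapted to the tracial state $m$, in three stages: a bound on the Laplace transform, a spectral Chebyshev step, and an optimisation. First I would reduce everything to the moment generating function. Set $G = F - m(F)$, so $m(G)=0$, and define $L(s)=m(e^{sG})$ for $s\ge 0$; since $e^{sG}$ is positive and $m$ is faithful, $L(s)>0$ and $L(0)=1$. The hypothesis $\int\|\mathcal{D}_tF\|_\infty^2\,dt<\infty$ guarantees that $u_t \coloneq m(\mathcal{D}_tF\mid\mathscr{C}_t)$ lies in $\mathfrak{H}[0,1]$, so by the anti-symmetric Clark-Ocone formula (Theorem~\ref{The anti-symmetric Clark-Ocone formula}) together with Proposition~\ref{coinside with Ito-Clifford} we have $G=\delta(u)$. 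Differentiating and using that $m$ is tracial gives $L'(s)=m(Ge^{sG})=m(e^{sG}\delta(u))$, and applying the duality between $\mathcal{D}$ and $\delta$ (the proposition stating that the adjoint of $\mathcal{D}$ extends $\delta$) with the self-adjoint test element $e^{sG}$ yields
$$L'(s)=\int m\big((\mathcal{D}_te^{sG})^*u_t\big)\,dt.$$

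Next I would estimate this integral with the non-commutative Hölder inequality. Factoring $\mathcal{D}_te^{sG}=e^{sG}\,(e^{-sG}\mathcal{D}_te^{sG})$ and using $\|AB\|_1\le\|A\|_1\|B\|_\infty$ gives $\|\mathcal{D}_te^{sG}\|_1\le m(e^{sG})\,\|e^{-sG}\mathcal{D}_te^{sG}\|_\infty$, and since $G$ and $F$ differ by a scalar, $e^{-sG}\mathcal{D}_te^{sG}=e^{-sF}\mathcal{D}_te^{sF}$. Combining this with $|m((\mathcal{D}_te^{sG})^*u_t)|\le\|\mathcal{D}_te^{sG}\|_1\|u_t\|_\infty$ and the contraction property $\|u_t\|_\infty=\|m(\mathcal{D}_tF\mid\mathscr{C}_t)\|_\infty\le\|\mathcal{D}_tF\|_\infty$ leads to $|L'(s)|\le h(s)\,L(s)$. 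Dividing by $L(s)>0$ and integrating from $0$ to $s$, and noting $h(0)=0$ because $\mathcal{D}_t(\mathrm{Id})=0$, I obtain $\log L(s)\le\int_0^s h(\sigma)\,d\sigma$, that is, $m(e^{s(F-m(F))})\le\exp\big(\int_0^s h(\sigma)\,d\sigma\big)$.

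Finally I would pass to the tail estimate and optimise in $s$. For $x\ge 0$ and $s\ge 0$, the spectral calculus gives the operator inequality $e^{s(F-m(F))}\ge e^{sx}E([m(F)+x,\infty))$, and applying the positive functional $m$ produces $m(E([m(F)+x,\infty)))\le\exp\big(-sx+\int_0^s h(\sigma)\,d\sigma\big)$. Minimising the exponent over $s\ge 0$ is a Legendre-transform computation: with $H(s)=\int_0^s h$, the monotonicity of $h$ (which, since $h\ge 0$ and $h(0)=0$, forces $h$ to be non-decreasing) makes $H$ convex with $H(0)=0$ and $H'(0)=0$, so $\min_{s\ge 0}(-sx+H(s))=-H^*(x)=-\int_0^x h^{-1}(y)\,dy$, the critical point $s=h^{-1}(x)$ being exactly where the assumed invertibility of $h$ is used. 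This gives precisely inequality~(\ref{m ineq}).

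The step I expect to be the main obstacle is the regularity justification that $e^{sG}\in\mathrm{Dom}(\mathcal{D})$, so that the duality step is legitimate and the $L^1$ and $L^\infty$ norms above are finite. Because $\mathcal{D}_t$ is only a twisted derivation (Proposition~\ref{anti-Leibniz}), the naive chain rule $\mathcal{D}_te^{sF}=se^{sF}\mathcal{D}_tF$ fails; this is precisely why the hypothesis is phrased through the operator $e^{-sF}\mathcal{D}_te^{sF}$ rather than through $\mathcal{D}_tF$ alone. I would handle this by approximating $e^{sG}$ by polynomials in $G$, which lie in $\mathrm{Dom}(\mathcal{D})$ by the Leibniz rule, and using the finiteness of $h$ together with $\int\|\mathcal{D}_tF\|_\infty^2\,dt<\infty$ to control the passage to the limit.
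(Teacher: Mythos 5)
Your proposal is correct and takes essentially the same approach as the paper: the paper's entire proof is the one-line remark that, thanks to the anti-symmetric Clark--Ocone formula (Theorem~\ref{The anti-symmetric Clark-Ocone formula}), the argument of \cite{HP02} carries over, and your Herbst-type argument (differentiating the Laplace transform via Clark--Ocone and the $\mathcal{D}$--$\delta$ duality, non-commutative H\"older and contractivity of conditional expectation, spectral Chebyshev, then Legendre optimization yielding $\int_0^x h^{-1}$) is exactly that argument written out in the Clifford setting. Your treatment of the one point the paper leaves implicit---that $e^{sG}\in\mathrm{Dom}(\mathcal{D})$, via polynomial approximation, the twisted Leibniz rule, and closedness of $\mathcal{D}$---is also sound.
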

\begin{proof}
Thanks to Theorem~\ref{The anti-symmetric Clark-Ocone formula}, the proof proceeds in the same way as in \cite{HP02}.
\end{proof}
The right-hand side of (\ref{m ineq}) can be evaluated by a more explicit form.
\begin{cor}
    Under the same assumptions of Theorem~\ref{concentration}, let $$A= \frac{\int\| \mathcal{D}_tF\|^2_{L^\infty (\mathscr{C})}\ dt}{2\|F\|_{L^\infty (\mathscr{C})}}.$$ Then
    $$
    m\left( E([ m(F)+x, \infty )) \right)\le \exp\left( -\frac{x}{2\| F \|_{L^\infty (\mathscr{C})}}\left(W \left( \frac{x}{A} \right)-1+\frac{1}{W\left( \frac{x}{A} \right)}\right) \right),
    $$
    where $W$ is the Lambert W function.
\end{cor}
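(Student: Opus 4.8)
The plan is to dominate the function $h$ appearing in Theorem~\ref{concentration} by an elementary explicit majorant $\bar h$, invert $\bar h$, and integrate. Write $M := \|F\|_{L^\infty(\mathscr{C})}$ and $V := \int \|\mathcal{D}_t F\|^2_{L^\infty(\mathscr{C})}\, dt$, so that $A = V/(2M)$. Since $h$ is increasing with $h(0)=0$, once I produce an increasing $\bar h$ with $h(s)\le \bar h(s)$ for all $s\ge 0$, I get $h^{-1}(s)\ge \bar h^{-1}(s)$ pointwise, hence $\int_0^x h^{-1}\,ds \ge \int_0^x \bar h^{-1}\,ds$, so the bound (\ref{m ineq}) only improves when $h$ is replaced by $\bar h$. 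The whole argument thus reduces to an explicit estimate of $\|e^{-sF}\mathcal{D}_t e^{sF}\|_{L^\infty(\mathscr{C})}$.

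For that estimate I would use the twisted Leibniz rule of Proposition~\ref{anti-Leibniz}, namely $\mathcal{D}_t(UV)=\mathcal{D}_t(U)V+\beta(U)\mathcal{D}_t(V)$, together with the fact that $\beta$ is a $*$-automorphism, hence an isometry on $L^\infty(\mathscr{C})$ with $\|\beta(F)\|_{L^\infty(\mathscr{C})}=M$ (see \cite{BSW4}). Induction on $n$ gives $\mathcal{D}_t(F^n)=\sum_{k=0}^{n-1}\beta(F)^k\mathcal{D}_t(F)F^{n-1-k}$, and summing the exponential series yields the Duhamel-type representation
\begin{equation*}
\mathcal{D}_t(e^{sF}) = \int_0^s e^{(s-u)\beta(F)}\,\mathcal{D}_t(F)\,e^{uF}\, du .
\end{equation*}
Since $F$ and $\beta(F)$ are self-adjoint of norm $M$, each exponential factor has norm at most $e^{(\cdot)M}$; bounding the integrand by $e^{sM}\|\mathcal{D}_t F\|_{L^\infty(\mathscr{C})}$, integrating over $u\in[0,s]$, and multiplying by $\|e^{-sF}\|_{L^\infty(\mathscr{C})}\le e^{sM}$ produces $\|e^{-sF}\mathcal{D}_t e^{sF}\|_{L^\infty(\mathscr{C})}\le s\,e^{2Ms}\,\|\mathcal{D}_t F\|_{L^\infty(\mathscr{C})}$. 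This operator bound is the heart of the matter and the step I expect to be the main obstacle: because $F$ and $\beta(F)$ do not commute, the two outer exponentials cannot be collapsed, so one is forced to accept this norm estimate, and coaxing the constant into exactly the form claimed in the statement is the delicate point.

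Integrating over $t$ then gives $h(s)\le V s\,e^{2Ms}=2MA\,s\,e^{2Ms}=:\bar h(s)$. To invert $\bar h$ I set $u=2Ms$, so $\bar h(s)=A\,u\,e^{u}$; solving $u e^{u}=s/A$ by the Lambert $W$ function gives $\bar h^{-1}(s)=W(s/A)/(2M)$, which already explains the factor $2\|F\|$ in the denominator and the argument $x/A$. Finally, using the antiderivative $\int W(z)\,dz = z\bigl(W(z)-1+1/W(z)\bigr)$ (equivalently $(W(z)^2-W(z)+1)e^{W(z)}$) and the substitution $z=s/A$, the quantity $\tfrac{1}{2M}\int_0^x W(s/A)\,ds$ evaluates to the exponent $\tfrac{x}{2M}\bigl(W(x/A)-1+1/W(x/A)\bigr)$ displayed in the statement; feeding this into (\ref{m ineq}) gives the result. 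The remaining routine points are verifying that polynomials in the bounded self-adjoint $F$ lie in $\mathrm{Dom}(\mathcal{D})$ so the series manipulations are legitimate, and carefully tracking the contribution of the lower endpoint $s=0$ in this last integral.
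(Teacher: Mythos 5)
Your proposal follows the paper's proof essentially step for step: the paper likewise notes $\| \beta(F) \|_{L^\infty(\mathscr{C})} = \| F \|_{L^\infty(\mathscr{C})}$, asserts the key estimate $\| \mathcal{D}_t e^{sF} \|_{L^\infty(\mathscr{C})} \le s \| \mathcal{D}_t F \|_{L^\infty(\mathscr{C})} e^{s\| F \|_{L^\infty(\mathscr{C})}}$ (your Duhamel-type expansion is exactly the justification the paper leaves implicit), dominates $h(s) \le k(s) = \int \| \mathcal{D}_t F \|^2_{L^\infty(\mathscr{C})}\, dt \cdot s e^{2s\| F \|_{L^\infty(\mathscr{C})}}$, and inverts $k$ via the Lambert $W$ function to get $k^{-1}(s) = \tfrac{1}{2\|F\|_{L^\infty(\mathscr{C})}} W(s/A)$. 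The endpoint subtlety you flag at the end is real but is shared by the paper itself: since $\int_0^x W(s/A)\, ds = x\bigl(W(x/A) - 1 + 1/W(x/A)\bigr) - A$ (the antiderivative equals $1$, not $0$, at the origin), the argument literally yields the stated bound multiplied by $e^{A/(2\|F\|_{L^\infty(\mathscr{C})})} \ge 1$, a discrepancy the paper's proof silently drops as well.
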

\begin{proof}
    Note that $\| \beta(F) \|_{L^\infty (\mathscr{C})}= \| F \|_{L^\infty (\mathscr{C})}$. The following inequalities hold:
    \begin{gather*}
        \| \mathcal{D}_t e^{sF} \|_{L^\infty (\mathscr{C})}\le s\| \mathcal{D}_tF \|_{L^\infty (\mathscr{C})} e^{s\| F \|_{L^\infty (\mathscr{C})}},\\
        \| e^{-sF} \|_{L^\infty (\mathscr{C})} \le e^{s\| F \|_{L^\infty (\mathscr{C})}}.
    \end{gather*}
    Therefore, 
    $$
    h(s)\le \int \| \mathcal{D}_tF \|^2_{L^\infty (\mathscr{C})}\ dt \cdot se^{2s\| F \|_{L^\infty (\mathscr{C})}}\eqqcolon k(s).
    $$
    Given the above, we have
    $$
    -h^{-1}(s)\le -k^{-1}(s)=-\frac{1}{2\| F \|_{L^\infty (\mathscr{C})}}W\left( \frac{s}{A} \right).
    $$
\end{proof}
Next, we present the results concerning the logarithmic Sobolev inequality.
\begin{lem}
    Let $f$ be a continuous function on $\mathbb{R}$ and $0\neq z\in L^2(\mathbb{R}_+)$. Then
    $$
    f(\Psi(z))=\frac{f(\| z \|_1) + f(-\| z \|_1)}{2} + \frac{\Psi(z)\left( f(\| z \|_1) - f(-\| z \|_1) \right)}{2\| z \|_1}.
    $$
\end{lem}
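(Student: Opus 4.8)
The plan is to reduce the identity to elementary functional calculus for a bounded self-adjoint operator whose square is a positive scalar. The key preliminary observation is that $\Psi(z)^2 = \|z\|_1^2$ as an operator. Indeed, taking $z'=z$ in the CAR relation $\{\Psi(z),\Psi(z')\}=2\langle J(z'),z\rangle_{\mathcal{H}_{\mathbb{C}}}$ gives $2\Psi(z)^2 = 2\langle J(z),z\rangle_{\mathcal{H}_{\mathbb{C}}}$, i.e. $\Psi(z)^2 = \langle J(z),z\rangle_{\mathcal{H}_{\mathbb{C}}}$; for a real element $z$ (so that $J(z)=z$) this is $\langle z,z\rangle_{\mathcal{H}_{\mathbb{C}}} = \|z\|_{L^2(\mathbb{R}_+)}^2 = \|z\|_1^2$. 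Recall also that $\Psi(z)=b^*(z)+b(J(z))$ is bounded and, for real $z$, self-adjoint. Since $z\neq 0$, the number $a\coloneqq\|z\|_1$ is strictly positive and $\Psi(z)^2=a^2\,\mathrm{Id}$.

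Next I would introduce $P_{\pm}\coloneqq \tfrac12\big(\mathrm{Id}\pm a^{-1}\Psi(z)\big)$ and verify, using only $\Psi(z)^2=a^2\,\mathrm{Id}$ and the self-adjointness of $\Psi(z)$, that $P_++P_-=\mathrm{Id}$, $P_+P_-=P_-P_+=0$, $P_{\pm}^2=P_{\pm}$ and $P_{\pm}^*=P_{\pm}$. These are direct computations: for instance $P_+P_-=\tfrac14\big(\mathrm{Id}-a^{-2}\Psi(z)^2\big)=0$ and $P_+^2=\tfrac14\big(\mathrm{Id}+2a^{-1}\Psi(z)+a^{-2}\Psi(z)^2\big)=P_+$. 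Hence $P_+,P_-$ are complementary orthogonal projections, $\Psi(z)=a\,(P_+-P_-)$, and the spectrum of $\Psi(z)$ is contained in $\{a,-a\}$.

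Finally, since the spectrum of $\Psi(z)$ consists of at most the two points $\pm a$, the continuous functional calculus gives $f(\Psi(z))=f(a)P_++f(-a)P_-$ (this also covers the degenerate cases $\Psi(z)=\pm a\,\mathrm{Id}$, where one projection vanishes, and continuity of $f$ is more than enough, as only the values $f(\pm a)$ enter). Substituting $P_{\pm}=\tfrac12(\mathrm{Id}\pm a^{-1}\Psi(z))$ and $a=\|z\|_1$ and collecting the constant and the $\Psi(z)$ terms yields the asserted formula. The only substantive step is the scalar-square identity $\Psi(z)^2=\|z\|_1^2$; everything after it is routine. The one point demanding care is the interpretation of $\|z\|_1$: the identity $\Psi(z)^2=\|z\|_1^2\,\mathrm{Id}$ holds exactly when $z$ is real, which is the setting relevant to the self-adjoint functionals studied in this subsection, so the lemma should be read with $z$ real (equivalently, for complex $z$ one replaces $\|z\|_1^2$ by $\langle J(z),z\rangle_{\mathcal{H}_{\mathbb{C}}}$ throughout).
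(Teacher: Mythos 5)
Your proof is correct, but it takes a genuinely different route from the paper's. The paper argues by approximation: it verifies the identity for monomials $f(x)=x^n$ (where everything reduces to the relation $\Psi(z)^2=\|z\|_1^2$, so even powers are scalars and odd powers are scalar multiples of $\Psi(z)$), extends to polynomials by linearity, and then passes to general continuous $f$ via the Weierstrass approximation theorem on $[-\|z\|_1,\|z\|_1]$. You instead use the same key identity $\Psi(z)^2=\|z\|_1^2\,\mathrm{Id}$ to exhibit the spectral decomposition explicitly: $P_\pm=\tfrac12\big(\mathrm{Id}\pm\|z\|_1^{-1}\Psi(z)\big)$ are complementary orthogonal projections with $\Psi(z)=\|z\|_1(P_+-P_-)$, hence $\sigma(\Psi(z))\subset\{\pm\|z\|_1\}$ and the formula is an immediate consequence of the continuous functional calculus. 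Your route buys a cleaner argument: all continuous $f$ are handled at once, with no density step, and it is transparent why only the two values $f(\pm\|z\|_1)$ can appear; note also that the paper's approximation step silently relies on the same spectral information, namely the bound $\|p(\Psi(z))\|\le\sup_{|\lambda|\le\|z\|_1}|p(\lambda)|$ for polynomials $p$. The paper's route is more elementary in that it never names projections or spectra. A further merit of your write-up is that it makes explicit the hypothesis under which the statement is even meaningful: $\Psi(z)$ is self-adjoint, and $\Psi(z)^2=\|z\|_1^2$, precisely when $z$ is real (i.e.\ $J(z)=z$), since in general $\Psi(z)^*=\Psi(J(z))$; this restriction is implicit in the paper, where the lemma is only applied to real elements such as $z=\mathbbm{1}_{[0,t]}$. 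One small caveat: your closing remark that for complex $z$ one may simply replace $\|z\|_1^2$ by $\langle J(z),z\rangle_{\mathcal{H}_{\mathbb{C}}}$ should be confined to polynomial $f$, since for non-real $z$ the operator $\Psi(z)$ is not self-adjoint (nor normal in general), so continuous functional calculus is not available; this does not affect the proof of the lemma as intended.
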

\begin{proof}
    The case where $f$ is a monomial can be easily shown. For polynomial $f$, the result follows from the linearity of the formula with respect to $f$. By using the Weierstrass approximation theorem on $[-\|z\|_1, \|z\|_1]$, the result holds for a general $f$.
\end{proof}
In particular, 
$$
f(\Psi_t)=\frac{f(t^{1/2}) + f(-t^{1/2})}{2} + \frac{\Psi_t\left( f(t^{1/2}) - f(-t^{1/2}) \right)}{2t^{1/2}}.
$$
Let $f$ be differentiable. Taking the expectation and differentiating, we get
$$
\frac{dm\left( f(\Psi_t) \right)}{dt} =\frac{1}{4t^{1/2}}\left( f'(t^{1/2}) - f'(-t^{1/2}) \right).
$$
For 
$$
M_t= m(f^2(\Psi_1)| \mathscr{C}_t)= a\Psi_t + b,
$$
where $a= (f^2(1)-f^2(-1))/2, b= (f^2(1)+f^2(-1))/2$, we have
$$
\frac{dm\left( M_t\log M_t \right)}{dt}=\frac{a}{4t^{1/2}}\log\frac{b+at^{1/2}}{b-at^{1/2}}.
$$
Thus,
\begin{equation}\label{log1}
m\left( M_1\log M_1 \right)-M_0\log M_0 = \int_0^1 \frac{a}{4t^{1/2}}\log\frac{b+at^{1/2}}{b-at^{1/2}}\ dt.
\end{equation}
By setting $f=1+sx$, where $0\le s \le1$, in (\ref{log1}), we can reproduce a calculation process in the proof of {\cite[Thm~$3$]{Gro75b}}.
\begin{prop}[cf. {\cite[Thm~3]{Gro75b}}]
Let $f : \{ -1, 1 \} \to \mathbb{C}$. It holds that
    $$
    m(|f(\Psi_1)|^2 \log |f(\Psi_1)|^2) - m(|f(\Psi_1)|^2) \log m(|f(\Psi_1)|^2) \le 2\log 4\cdot\|\mathcal{D}_{\cdot} f(\Psi_1) \|^2_{L^2(\mathbb{R}_+)\otimes L^2(\mathscr{C})}.
    $$
\end{prop}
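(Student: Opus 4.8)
The plan is to collapse everything to the classical two-point logarithmic Sobolev inequality. Since $z=\mathbbm{1}_{[0,1]}$ has $\|z\|_1=1$, the preceding lemma expressing $f(\Psi(z))$ as an affine function of $\Psi(z)$ gives $f(\Psi_1)=c_0+c_1\Psi_1$ with $c_0=\tfrac12(f(1)+f(-1))$ and $c_1=\tfrac12(f(1)-f(-1))$, where $\Psi_1=J_1(\mathbbm{1}_{[0,1]})$. First I would compute the right-hand side: by Definition~\ref{D and delta} one has $\mathcal D_t(J_1(\mathbbm{1}_{[0,1]}))=\mathbbm{1}_{[0,1]}(t)$, so $\mathcal D_t f(\Psi_1)=c_1\mathbbm{1}_{[0,1]}(t)\cdot 1$ and hence $\|\mathcal D_\cdot f(\Psi_1)\|^2_{L^2(\mathbb{R}_+)\otimes L^2(\mathscr C)}=|c_1|^2=\tfrac14|f(1)-f(-1)|^2$. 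Thus the asserted bound is exactly $\mathrm{Ent}(|f|^2)\le \tfrac{\log 4}{2}\,|f(1)-f(-1)|^2$, where $\mathrm{Ent}$ denotes the expression on the left.

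Next I would identify that left-hand side as a genuine two-point entropy. Because $z$ is real, $\Psi_1^2=\|z\|^2=1$, so $\Psi_1$ is self-adjoint with spectrum $\{-1,1\}$ and $m(\Psi_1)=0$; its spectral projections $P_\pm=\tfrac12(1\pm\Psi_1)$ satisfy $m(P_\pm)=\tfrac12$, and functional calculus yields $m(g(\Psi_1))=\tfrac12(g(1)+g(-1))$ for every continuous $g$. Applying this to $g=|f|^2\log|f|^2$ and to $g=|f|^2$ shows that the left-hand side equals $\tfrac12(|f(1)|^2\log|f(1)|^2+|f(-1)|^2\log|f(-1)|^2)-\tfrac{|f(1)|^2+|f(-1)|^2}{2}\log\tfrac{|f(1)|^2+|f(-1)|^2}{2}$, the classical entropy of $|f|^2$ under the uniform measure on $\{-1,1\}$. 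Equivalently, this is precisely what formula~(\ref{log1}) records, already as an explicit integral.

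I would then make two reductions. Since the left-hand side depends only on $(|f(1)|,|f(-1)|)$ while $|f(1)-f(-1)|\ge\big||f(1)|-|f(-1)|\big|$, it suffices to prove the stronger inequality with the right-hand side formed from these absolute values, i.e. to treat nonnegative real $f$. Both sides are homogeneous of degree two in $f$, and the inequality is invariant under $x\mapsto -x$; normalizing the sum of the values and ordering them, every such $f$ is a positive multiple of $1+sx$ with $s\in[0,1]$, the family flagged before the statement. The claim thereby reduces to the scalar inequality $\mathrm{Ent}(|1+sx|^2)\le 2\log 4\cdot s^2$ on $[0,1]$, for which (\ref{log1}) with $a=2s,\ b=1+s^2$ supplies the closed form $\mathrm{Ent}(|1+sx|^2)=\tfrac12\big[(1+s)^2\log\tfrac{(1+s)^2}{1+s^2}+(1-s)^2\log\tfrac{(1-s)^2}{1+s^2}\big]$.

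The main obstacle is this last elementary inequality, and it is exactly where the non-optimal constant appears: controlling the entropy of the square $|f|^2$ by the squared gradient of $f$ forces a passage through a square root that degrades the sharp two-point constant $\tfrac12$ to the cruder $\tfrac{\log 4}{2}=\log 2$, which is the reason only a weaker result is obtained. Concretely I would set $g(s):=2\log 4\cdot s^2-\mathrm{Ent}(|1+sx|^2)$ and verify $g\ge 0$ on $[0,1]$ using $g(0)=g'(0)=0$ together with a sign/convexity analysis of $g'$ (equivalently, bounding the integrand $\tfrac{a}{2}\log\tfrac{b+a\tau}{b-a\tau}$ of (\ref{log1}) so as to reproduce the estimate in the proof of \cite[Thm~3]{Gro75b}). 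This step is routine calculus but is the only genuinely computational point of the argument.
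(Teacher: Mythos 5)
Your proposal is correct, but it reaches the inequality by a genuinely different route than the paper. The paper's proof is built to exercise the machinery of subsection~\ref{subsection_Mcf}: it represents the martingale $M_t = m(|f(\Psi_1)|^2\,|\,\mathscr{C}_t) = a\Psi_t + b$ via the anti-symmetric Clark-Ocone formula (Theorem~\ref{The anti-symmetric Clark-Ocone formula}), uses the $\beta$-twisted product rule of Proposition~\ref{anti-Leibniz}, the It\^o-Clifford isometry and the conditional Schwarz inequality to obtain $a^2 \le 4b\,\|\mathcal{D}_\cdot f(\Psi_1)\|^2_{L^2(\mathbb{R}_+)\otimes L^2(\mathscr{C})}$, and then closes with the elementary estimate $\int_0^1 \frac{a}{4t^{1/2}}\log\frac{b+at^{1/2}}{b-at^{1/2}}\,dt \le \frac{a^2}{b}\log 2$. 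You instead compute both sides exactly: $\|\mathcal{D}_\cdot f(\Psi_1)\|^2 = \frac{1}{4}|f(1)-f(-1)|^2$ (correct, since $f(\Psi_1)=c_0+c_1\Psi_1$ and $\mathcal{D}_t\Psi_1 = \mathbbm{1}_{[0,1]}(t)$), while the left-hand side is the classical entropy of $|f|^2$ under the uniform measure on $\{-1,1\}$ (correct, since $\Psi_1^2=1$ and $m(\Psi_1)=0$); after your reductions (modulus via the reverse triangle inequality, homogeneity, symmetry), which are all legitimate, the proposition collapses to the scalar inequality $\mathrm{Ent}(|1+sx|^2)\le 2\log 4\cdot s^2$ on $[0,1]$. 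The step you defer as routine does go through exactly as you sketch: writing $g(s)=2\log4\cdot s^2-\mathrm{Ent}(|1+sx|^2)$ and using $\mathrm{Ent}(|1+sx|^2)=(1+s)^2\log(1+s)+(1-s)^2\log(1-s)-(1+s^2)\log(1+s^2)$, one gets $g(0)=g'(0)=0$ and $g''(s) = 4\log 4 - 4 + 2\log\frac{1+s^2}{1-s^2}+\frac{4s^2}{1+s^2} \ge 4\log4-4>0$, so $g\ge 0$ by convexity. The trade-off between the two arguments: yours is more elementary and more transparent --- it exposes the proposition as a non-sharp two-point Gross inequality in disguise, and if at the last step you invoked Gross's sharp two-point inequality instead of the crude scalar bound, you would recover the better constant $2$ mentioned in the paper's Remark rather than $2\log 4$; on the other hand, it bypasses the fermionic Malliavin calculus almost entirely (only the trivial computation of $\mathcal{D}$ on a degree-one element is used), whereas the paper's proof deliberately runs the Clark-Ocone formula and the modified Leibniz rule through the argument, in parallel with the stochastic-calculus proofs of the classical logarithmic Sobolev inequality, and thereby shows precisely at which estimates the constant degrades.
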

\begin{rem}
    This inequality does not provide a good estimate. By {\cite[Thm~3]{Gro75b}}, we can show that the right-hand side of this inequality becomes $ 2\|\mathcal{D}_{\cdot} f(\Psi_1) \|^2_{L^2(\mathbb{R}_+)\otimes L^2(\mathscr{C})}$. 
\end{rem}
\begin{proof}
    Using Theorem~\ref{The anti-symmetric Clark-Ocone formula}, we calculate $a^2$.
    \begin{align*}
        a^2 &= m\left( \left| \int_0^1 d\Psi_s m(\mathcal{D}_sf^2(\Psi_1)|\mathscr{C}_s) \right|^2 \right)\\
        &= m\left( \left| \int_0^1 d\Psi_s m(\mathcal{D}_sf(\Psi_1)\cdot f(\Psi_1) + \beta(f(\Psi_1))\cdot \mathcal{D}_sf(\Psi_1) |\mathscr{C}_s) \right|^2 \right)\\
        &=  \int_0^1 m\left( \left| m(\mathcal{D}_sf(\Psi_1)\cdot f(\Psi_1) + \beta(f(\Psi_1))\cdot \mathcal{D}_sf(\Psi_1) |\mathscr{C}_s) \right|^2 \right)\ ds,
    \end{align*}
    where we used the isometry for the It\^o-Clifford integral. By applying the conditional Schwarz inequality and using 
    the unitarity of $\beta$, the triangle inequality of $\|\cdot\|_{L^2(\mathscr{C})}$, and the Schwarz inequality, the integrand with respect to $s$ is dominated by
    $$
        4\|f(\Psi_1)\|_{L^2(\mathscr{C})}^2\cdot \|\mathcal{D}_{s}f(\Psi_1)\|_{L^2(\mathscr{C})}^2
        = 4b \|\mathcal{D}_{s}f(\Psi_1)\|_{L^2(\mathscr{C})}^2.
    $$
    Thus, it suffices to verify the following inequality for the right-hand side of (\ref{log1}):
    \begin{equation}\label{log2}
        \int_0^1 \frac{a}{4t^{1/2}}\log\frac{b+at^{1/2}}{b-at^{1/2}}\ dt\le \frac{a^2}{b}\log2.
    \end{equation}
    Note that
    \begin{align*}
        \int_0^1 \frac{a}{4t^{1/2}}\log\frac{b+at^{1/2}}{b-at^{1/2}}\ dt &= \int_0^1 \frac{a}{2}\log\frac{b+ax}{b-ax}\ dx\\
        &= \int_0^1 \int_0^x \frac{a^2b}{b^2-a^2y^2}\ dydx.
    \end{align*}
    (\ref{log2}) follows from the next inequality:
    \begin{align*}
        \int_0^1 \int_0^x \frac{a^2b}{b^2-a^2y^2}\ dydx &= \frac{a^2}{b}\int_0^1 \int_0^x \frac{1}{1-\left( \frac{a}{b}\right)^2 y^2}\ dydx\\
        &\le \frac{a^2}{b}\int_0^1 \int_0^x \frac{1}{1-y^2}\ dydx\\
        &= \frac{a^2}{b}\log2.
    \end{align*}
\end{proof}
Finally, we discuss the fourth-moment theorem. To begin with, we will mention the background of this theorem, referring to \cite{NP12}.
\begin{lem}[{\cite[Lem.~3.1.2]{NP12}} Stein's lemma]
    A real-valued random variable $N$ has the standard Gaussian distribution $\gamma$ (this is also written by $N\sim\mathcal{N}(0,1)$) if and only if, for every differentiable function $f : \mathbb{R} \to \mathbb{R}$ such that $f' \in L^1(\gamma)$, $N f (N)$ and $f' (N)$ are integrable and $$
    \mathbb{E}(N f (N)) = \mathbb{E}( f' (N)).
    $$
\end{lem}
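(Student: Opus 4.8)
The plan is to establish the two implications separately: necessity by integrating against the Gaussian density, and sufficiency by deriving an ordinary differential equation for the characteristic function.

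For the necessity, suppose $N \sim \mathcal{N}(0,1)$ and write $\varphi(x) = (2\pi)^{-1/2}e^{-x^2/2}$ for its density. The whole computation rests on the identity $\varphi'(x) = -x\varphi(x)$. I would start from $\mathbb{E}(Nf(N)) = \int_{\mathbb{R}} xf(x)\varphi(x)\,dx = -\int_{\mathbb{R}} f(x)\varphi'(x)\,dx$ and integrate by parts to reach $\int_{\mathbb{R}} f'(x)\varphi(x)\,dx = \mathbb{E}(f'(N))$. The substantive point is to verify that $Nf(N)$ is integrable and that the boundary contribution $[f\varphi]_{-\infty}^{\infty}$ vanishes; both follow from $f' \in L^1(\gamma)$, since the hypothesis forces $f$ to grow slowly enough to be controlled by the rapidly decaying Gaussian weight. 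This is the one genuinely delicate estimate of the necessity direction, and I would treat it via the tail identity $\varphi(x) = \int_x^{\infty} t\varphi(t)\,dt$ together with Fubini.

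For the sufficiency, assume the integration-by-parts identity holds for every admissible $f$, and let $\phi(t) = \mathbb{E}(e^{itN})$ denote the characteristic function. Fixing $t \in \mathbb{R}$, I would apply the hypothesis to the two real test functions $f(x) = \cos(tx)$ and $f(x) = \sin(tx)$; both are bounded with bounded derivatives, so they lie in the admissible class. This yields $\mathbb{E}(N\cos tN) = -t\,\mathbb{E}(\sin tN)$ and $\mathbb{E}(N\sin tN) = t\,\mathbb{E}(\cos tN)$, which combine to $\mathbb{E}(Ne^{itN}) = it\,\phi(t)$. Since $\phi'(t) = i\,\mathbb{E}(Ne^{itN})$, this is exactly the ODE $\phi'(t) = -t\phi(t)$ with $\phi(0) = 1$, whose unique solution is $\phi(t) = e^{-t^2/2}$; uniqueness of the characteristic function then identifies the law of $N$ as $\mathcal{N}(0,1)$.

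The main obstacle is a regularity issue buried in the last step: writing $\phi'(t) = i\,\mathbb{E}(Ne^{itN})$ requires differentiation under the expectation, which is legitimate only once we know $\mathbb{E}|N| < \infty$. I would extract this finiteness from the hypothesis itself by taking $f = \arctan$, which is bounded with $f'(x) = (1+x^2)^{-1} \in L^1(\gamma)$; the assumed integrability of $Nf(N) = N\arctan N$, combined with $\arctan x \to \pm \pi/2$ as $x \to \pm\infty$, forces $\mathbb{E}|N| < \infty$. With the first moment finite, dominated convergence justifies the differentiation and the argument closes.
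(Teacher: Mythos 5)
The paper offers no proof of this lemma at all---it is quoted directly from \cite[Lem.~3.1.2]{NP12} as background for the fourth-moment discussion---so the only meaningful comparison is with the standard argument in that reference, which yours essentially reproduces: Gaussian integration by parts for necessity, handled rigorously via the tail identity $\varphi(x)=\int_x^{\infty}t\varphi(t)\,dt$ and Fubini, and for sufficiency the characteristic-function ODE $\phi'(t)=-t\phi(t)$ extracted from the test functions $\cos(tx)$ and $\sin(tx)$. Your proof is correct, including the genuinely necessary step of securing $\mathbb{E}|N|<\infty$ before differentiating under the expectation; your $\arctan$ device does this, though applying the hypothesis to $f(x)=x$ (admissible, since $f'\equiv 1\in L^1(\gamma)$) gives $\mathbb{E}(N^2)=1$ and hence $\mathbb{E}|N|<\infty$ even more directly.
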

Using Stein's equation {\cite[Def.~3.2.1]{NP12}}, we can show that some distances (e.g. the total variation distance) between the normal distribution and the random variable $F$ are bounded by
$$
\sup_{f\in \mathcal{F}}| \mathbb{E}(f'(F)) -\mathbb{E}(Ff(F)) |,
$$
where $\mathcal{F}$ is an appropriate family of functions.
Moreover,
$$
\sup_{f\in \mathcal{F}}\left| \mathbb{E}(f'(F)) -\mathbb{E}(Ff(F)) \right|\le \sup_{{f\in \mathcal{F},x\in \mathbb{R}}}|f'(x)|\cdot \mathbb{E}(|1- \langle \mathcal{D}_\cdot F, \mathcal{D}_\cdot R^{-1}F \rangle_{L^2(\mathbb{R}_+)}|),
$$
where $R=\delta\circ\mathcal{D}$. This suggests that the term $ 1- \langle \mathcal{D}_\cdot F, \mathcal{D}_\cdot R^{-1}F \rangle_{L^2(\mathbb{R}_+)} $ could be used to investigate the properties of the distribution of $F$.
The following statement is a result in usual Malliavin calculus.
\begin{thm}[{\cite[Thm~5.2.7]{NP12}} The fourth-moment theorem]\label{normal FM thm}
Let $q\ge2$ and let $F_n = I_q (f_n)$, $n\in\mathbb{N}$, be a sequence of random variables belonging to the $q$-th chaos (so that $f_n \in L^2_s(\mathbb{R}^q_+)$). Moreover, assume that $\mathbb{E}(F^2_n ) \to \sigma^2 > 0$ as $n \to \infty$. Then, as $n \to \infty$, the following six assertions are equivalent:
    \begin{enumerate}
        \item $F_n$ converges in distribution to $N \sim \mathcal{N}(0, \sigma^2)$;
        \item $\mathbb{E}(F^4_n ) \to 3\sigma^4 = \mathbb{E}(N^4)$;
        \item $\langle \mathcal{D}_\cdot F_n, \mathcal{D}_\cdot R^{-1}F_n \rangle_{L^2(\mathbb{R}_+)} \to 1$ in $L^2(\Omega)$;
        \item $\mathrm{Var}\left( \| \mathcal{D}_\cdot F_n \|_{L^2(\mathbb{R}_+)}^2\right) \to 0$;
        \item $\|f_n \tilde{\otimes}_r f_n\|_{2q-2r} \to 0$ for all $r = 1,\dots, q-1$;
        \item $\|f_n \otimes_r f_n\|_{2q-2r} \to 0$ for all $r = 1,\dots, q-1$.
    \end{enumerate}
\end{thm}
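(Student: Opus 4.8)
The plan is to show the six assertions equivalent by arranging them in a cycle that passes through the analytic conditions (v)--(vi) on contraction norms, which serve as a hub, and closing the loop with Stein's method. Throughout I would fix $q\ge 2$, write $F=I_q(f)$, and record first the two structural identities on which everything rests. Since $R=\delta\circ\mathcal{D}$ acts as multiplication by $q$ on the $q$-th chaos (because $\mathcal{D}_tF=qI_{q-1}(f(\cdot,t))$ and $\delta$ restores the factor, returning $qF$), one has $R^{-1}F=q^{-1}F$ and hence $\langle\mathcal{D}_\cdot F,\mathcal{D}_\cdot R^{-1}F\rangle_{L^2(\mathbb{R}_+)}=q^{-1}\|\mathcal{D}_\cdot F\|^2_{L^2(\mathbb{R}_+)}$, a random variable whose expectation equals $\mathbb{E}(F^2)$. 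This immediately ties (iii) and (iv) together: the quantity in (iii) has mean $\mathbb{E}(F_n^2)\to\sigma^2$ by hypothesis (so its $L^2$-limit is forced to be $\sigma^2$, the value $1$ in (iii) corresponding to the normalisation $\sigma^2=1$), whence its convergence in $L^2$ to the constant is equivalent to the vanishing of its variance, i.e. to $\mathrm{Var}(\|\mathcal{D}_\cdot F_n\|^2)\to 0$, which is (iv).

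The technical heart is to expand the scalar quantities $\mathbb{E}(F^4)$, $\|\mathcal{D}_\cdot F\|^2$ and its variance by means of the product formula (Proposition~\ref{multiple formula}). Applying that formula to $I_{q-1}(f(\cdot,t))\,I_{q-1}(f(\cdot,t))$ and then integrating in $t$ expresses $\|\mathcal{D}_\cdot F\|^2$ as a sum of multiple integrals whose kernels are iterated contractions of $f$ with itself; isolating the chaos-zero term recovers the mean, and orthogonality of distinct chaoses yields $\mathrm{Var}(\|\mathcal{D}_\cdot F\|^2)$ as a strictly positive linear combination of $\|f{\otimes}_r f\|^2$, $r=1,\dots,q-1$. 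A parallel computation of $\mathbb{E}(F^4)=\mathbb{E}((F^2)^2)$ starting from $F^2=\sum_r r!\binom{q}{r}^2 I_{2q-2r}(f\tilde{\otimes}_r f)$ gives $\mathbb{E}(F^4)-3\mathbb{E}(F^2)^2$ as another strictly positive combination of $\|f\tilde{\otimes}_r f\|^2$. Since $\mathbb{E}(F_n^2)\to\sigma^2$, condition (ii) is equivalent to $\mathbb{E}(F_n^4)-3\mathbb{E}(F_n^2)^2\to 0$, and the positivity of the coefficients makes both (ii) and (iv) equivalent to the statement that every contraction norm tends to $0$. The remaining link (v)$\Leftrightarrow$(vi) is the purely combinatorial fact that for each $r$ the norms $\|f\tilde{\otimes}_r f\|$ and $\|f{\otimes}_r f\|$ dominate one another up to the family $\{\|f{\otimes}_s f\|\}_s$, so that the whole family vanishes symmetrically if and only if it vanishes unsymmetrically.

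It then remains to attach the distributional statement (i). The implication (iii)$\Rightarrow$(i) is exactly the Stein bound recalled before the theorem: for the total variation (or Kolmogorov, or Wasserstein) distance to $N\sim\mathcal{N}(0,\sigma^2)$ one has, with $R=\delta\circ\mathcal{D}$, a bound of the form $\mathbb{E}(|\sigma^2-\langle\mathcal{D}_\cdot F_n,\mathcal{D}_\cdot R^{-1}F_n\rangle_{L^2(\mathbb{R}_+)}|)$, which is controlled by the $L^2$-convergence asserted in (iii) via Cauchy--Schwarz. The converse direction (i)$\Rightarrow$(ii) is where the fixed-chaos hypothesis is essential and which I expect to be the main obstacle, since convergence in distribution does not in general force convergence of the fourth moment; the gap is closed by the hypercontractivity of Wiener chaos (Nelson's inequality), which, because all $F_n$ live in the single $q$-th chaos, yields $\sup_n\mathbb{E}(|F_n|^p)<\infty$ for every $p$. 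Uniform integrability of $\{F_n^4\}$ then upgrades convergence in law to convergence of the fourth moment, giving $\mathbb{E}(F_n^4)\to\mathbb{E}(N^4)=3\sigma^4$. Chaining (ii)$\Leftrightarrow$(v)$\Leftrightarrow$(vi)$\Leftrightarrow$(iv)$\Leftrightarrow$(iii)$\Rightarrow$(i)$\Rightarrow$(ii) closes the cycle and establishes the equivalence of all six assertions.
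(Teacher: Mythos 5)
Your proposal should first be set against what the paper actually does with this statement: it does not prove it. The theorem is quoted from \cite{NP12} as background for the fermionic results, and the paper's only original contribution here is the remark that assertion (iii), absent from the cited theorem, can be adjoined ``in a manner similar to the proof of Theorem~\ref{claim1}'' --- that is, via the observation that $R=\delta\circ\mathcal{D}$ acts as multiplication by $q$ on the $q$-th chaos, so that $\langle\mathcal{D}_\cdot F_n,\mathcal{D}_\cdot R^{-1}F_n\rangle_{L^2(\mathbb{R}_+)}=q^{-1}\|\mathcal{D}_\cdot F_n\|^2_{L^2(\mathbb{R}_+)}$ and (iii) reduces to (iv). So you have supplied a proof where the paper supplies a citation. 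Your outline is, in substance, the standard Nualart--Peccati/Nourdin--Peccati argument, and its skeleton is right: $R^{-1}F=q^{-1}F$ on the fixed chaos, product-formula expansions of $\|\mathcal{D}_\cdot F\|^2$ and of $F^2$ with positive coefficients, Stein's bound for (iii)~$\Rightarrow$~(i), and hypercontractivity plus uniform integrability for (i)~$\Rightarrow$~(ii). You also correctly flagged that the ``$\to 1$'' in (iii) presupposes the normalisation $\sigma^2=1$.

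One step is stated too loosely to stand as written. You claim that $\mathbb{E}(F^4)-3\mathbb{E}(F^2)^2$ is a positive combination of the symmetrized norms $\|f\tilde{\otimes}_r f\|^2$ alone, and separately that (v)~$\Leftrightarrow$~(vi) is ``purely combinatorial.'' Only the direction (vi)~$\Rightarrow$~(v) is elementary (symmetrization contracts the $L^2$ norm); there is no direct inequality bounding $\|f\otimes_r f\|$ by the symmetrized family, and in \cite{NP12} the implication (v)~$\Rightarrow$~(vi) is obtained only by going around the whole cycle. The repair is the precise fourth-moment identity: expanding $\mathbb{E}\big((F^2)^2\big)$ by orthogonality, the $r=0$ term $(2q)!\,\|f\tilde{\otimes}_0 f\|^2$ itself decomposes into $2(q!)^2\|f\|^4$ plus a positive combination of \emph{unsymmetrized} contraction norms, giving
\begin{equation*}
\mathbb{E}(F^4)-3\mathbb{E}(F^2)^2
=(q!)^2\sum_{r=1}^{q-1}\binom{q}{r}^2\|f\otimes_r f\|^2_{2q-2r}
+\sum_{r=1}^{q-1}(r!)^2\binom{q}{r}^4(2q-2r)!\,\|f\tilde{\otimes}_r f\|^2_{2q-2r}.
\end{equation*}
With this, (ii) implies (vi) directly, (vi) implies (v), and your chain (v)~$\Rightarrow$~(iv)~$\Rightarrow$~(iii)~$\Rightarrow$~(i)~$\Rightarrow$~(ii) closes the loop; the equivalence of (v) and (vi) then falls out of the cycle rather than preceding it.
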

Although there is not assertion $(\mathrm{iii})$ in \cite[Thm~5.2.7]{NP12}, we can prove that assertion $(\mathrm{iii})$ is a necessary and sufficient condition for this theorem in a manner similar to the proof of Theorem~\ref{claim1}.
Theorem~\ref{normal FM thm} means that whether random variables belonging to the chaos of order $q\ge2$ converge to a normal distribution is determined by the fourth moment. Below, we will show that such a phenomenon does not occur in the case of fermions.
\begin{lem}\label{wedge lemma}
    Let every $p,q \in \mathbb{Z}_{\ge0}$, let $f\in \Lambda_p,\ g\in \Lambda_q$. The following hold:
    \begin{gather*}
        f\hat{\wedge}_r g = (-1)^{pq+r}g\hat{\wedge}_r f,\\
        \int \overline{f(t,\overleftarrow{\cdot})}\wedge_r f(t,\cdot)\ dt =\overline{\overleftarrow{f}}\wedge_{r+1} f.
    \end{gather*}
\end{lem}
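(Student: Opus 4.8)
The guiding principle for both identities is the elementary fact that anti-symmetrization intertwines with permutations of the arguments: for a fixed $\rho\in S_n$ and $F\in L^2(\mathbb{R}_+^n)$, the anti-symmetrization of $F(t_{\rho(1)},\dots,t_{\rho(n)})$ equals $\sgn(\rho)\,\hat F$ (reindex the averaging sum by $\tau=\sigma\rho$). Hence two contractions that differ by reordering and reversing blocks of arguments have anti-symmetrizations that differ only by an explicit sign, and the whole problem reduces to bookkeeping. All signs will come from two moves: sliding a block of $r$ contracted variables past a block of $k$ free variables (sign $(-1)^{rk}$) and reversing a block of $r$ variables (sign $(-1)^{\binom{r}{2}}$).

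For the first identity I would start from the definition $(\ref{arrow})$ of $f\wedge_r g$ and $g\wedge_r f$ and rewrite the integrand of $g\wedge_r f$ into the shape of the integrand of $f\wedge_r g$, namely with $f$ carrying its contracted variables in its last $r$ slots as $\overrightarrow{s_r}$ and $g$ carrying them in its first $r$ slots as $\overleftarrow{s_r}$. Relocating the contracted block inside $f$ costs $(-1)^{r(p-r)}$ and inside $g$ costs $(-1)^{r(q-r)}$, while the two accompanying block-reversals contribute $(-1)^{\binom{r}{2}}$ each and cancel since $r(r-1)$ is even. This exhibits $g\wedge_r f$ as $(-1)^{r(p-r)+r(q-r)}$ times $f\wedge_r g$ evaluated with its free arguments block-swapped (the first $q-r$ and the last $p-r$ slots interchanged). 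Anti-symmetrizing and invoking the permutation fact above, the block swap contributes $\sgn=(-1)^{(p-r)(q-r)}$, so the total exponent is $r(p-r)+r(q-r)+(p-r)(q-r)=pq-r^2\equiv pq+r\pmod 2$, which is exactly the claimed sign.

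For the second identity I would read the convention $\overleftarrow{f}(x_1,\dots,x_p)=f(x_p,\dots,x_1)$ literally, so that $\overline{f(t,\overleftarrow{\cdot})}$ is $\overline{\overleftarrow{f(t,\cdot)}}\in\Lambda_{p-1}$. Expanding $\overline{f(t,\overleftarrow{\cdot})}\wedge_r f(t,\cdot)$ by $(\ref{arrow})$ (with $p-1$ in place of both $p$ and $q$), the integrand is $\overline{f(t,s_1,\dots,s_r,t_{p-1-r},\dots,t_1)}\,f(t,s_1,\dots,s_r,t_{p-r},\dots,t_{2p-2-2r})$. The outer integral $\int(\cdot)\,dt$ merely adjoins $t$ to the list of integration variables, so setting $s_0=t$ and relabelling $(s_0,s_1,\dots,s_r)\mapsto(u_1,\dots,u_{r+1})$ turns this into $\overline{f(u_1,\dots,u_{r+1},t_{p-r-1},\dots,t_1)}\,f(u_1,\dots,u_{r+1},t_{p-r},\dots,t_{2p-2-2r})$. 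This is precisely the integrand that $(\ref{arrow})$ produces for $\overline{\overleftarrow{f}}\wedge_{r+1}f$ at contraction order $r+1$, so the two sides coincide termwise and no anti-symmetrization is required.

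The main obstacle in both parts is purely the sign and index bookkeeping. In the first identity one must be scrupulous that the two block-relocation signs, the (cancelling) reversal signs, and the block-swap sign combine to $pq+r$ rather than some neighbouring parity. In the second identity there is no sign to track, but the only delicate point is to keep the two reversal operators $\overleftarrow{\cdot}$ and the complex conjugates aligned on both sides, so that the variable $t=s_0=u_1$ lands in the correct (first) argument of $f$ after relabelling.
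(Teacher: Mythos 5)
Your proposal is correct and follows essentially the same route as the paper: both identities are verified directly from the definition $(\ref{arrow})$ by tracking how the integrand transforms under a rearrangement of arguments and letting anti-symmetry convert that rearrangement into a sign (first identity), respectively by absorbing $t$ as an extra contraction variable via relabelling (second identity, where your termwise matching is exactly the paper's step $\overline{f(t,\overleftarrow{s_r},x_{q-1-r},\dots,x_1)}=\overline{\overleftarrow{f}(x_1,\dots,x_{q-1-r},\overrightarrow{s_r},t)}$). The only difference is cosmetic: the paper realizes the rearrangement in the first identity by fully reversing the arguments of $f$ and $g$ and then the free variables, accumulating the triangular signs $(-1)^{p(p-1)/2+q(q-1)/2+(p+q-2r)(p+q-2r-1)/2}$, whereas you slide and swap blocks, accumulating $(-1)^{r(p-r)+r(q-r)+(p-r)(q-r)}$; both exponents reduce to $pq+r$ modulo $2$.
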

\begin{proof}
We can prove this lemma by the following calculations:
    \begin{align*}
        f\hat{\wedge}_r g 
        &= \frac{1}{(p+q-2r)!}\sum_{\sigma\in S_{p+q-2r}}\int\sgn(\sigma) f(x_{\sigma(1)}, \dots, x_{\sigma(p-r)}, \overrightarrow{s_r})g(\overleftarrow{s_r}, x_{\sigma(p-r+1)}, \dots, x_{\sigma(p+q-2r)})\ d{\bf s}\\
        &= \frac{1}{(p+q-2r)!}\sum_{\sigma\in S_{p+q-2r}}\int\sgn(\sigma) (-1)^{p(p-1)/2}(-1)^{q(q-1)/2} g(x_{\sigma(p+q-2r)}, \dots, x_{\sigma(p-r+1)}, \overrightarrow{s_r})\\
        &\quad\times f(\overleftarrow{s_r}, x_{\sigma(p-r)}, \dots, x_{\sigma(1)})\ d{\bf s}\\
        &= \frac{1}{(p+q-2r)!}\sum_{\sigma\in S_{p+q-2r}}\int\sgn(\sigma) (-1)^{p(p-1)/2}(-1)^{q(q-1)/2}(-1)^{(p+q-2r)(p+q-2r-1)/2} \\
        &\quad\times g(x_{\sigma(1)}, \dots, x_{\sigma(p-r)}, \overrightarrow{s_r})f(\overleftarrow{s_r}, x_{\sigma(p-r+1)}, \dots, x_{\sigma(p+q-2r)})\ d{\bf s}\\
        &= (-1)^{pq+r}g\hat{\wedge}_{r} f,
    \end{align*}
    where we used the fact that $\overleftarrow{g\hat{\wedge}_r f}=(-1)^{(p+q-2r)(p+q-2r-1)/2}g\hat{\wedge}_r f$ for the third equality; similarly, we have
    \begin{align*}
        \int \overline{f(t,\overleftarrow{\cdot})}\wedge_r f(t,\cdot)(x_1, \dots, x_{2q-2-2r})\ dt
        &= \int \overline{f(t,\overleftarrow{s_r}, x_{q-1-r}, \dots, x_1)}f(t,\overleftarrow{s_r}, x_{q-r}, \dots, x_{2q-2-2r})\ d{\bf s}dt\\
        &= \int \overline{\overleftarrow{f}(x_1,\dots,x_{q-1-r},\overrightarrow{s_r}, t)}f(t,\overleftarrow{s_r}, x_{q-r}, \dots, x_{2q-2-2r})\ d{\bf s}dt\\
        &= \overline{\overleftarrow{f}}\wedge_{r+1} f(x_1, \dots, x_{2q-2-2r}).
    \end{align*}
\end{proof}
\begin{thm}\label{claim1}
    Let $q\ge2$ and let $F_n = J_q (f_n)$, $n\in\mathbb{N}$, be a sequence of random variables belonging to the $q$-th chaos (so that $f_n \in \Lambda_q$). Moreover, assume that $m(F_n^2) \to 1$ as $n \to \infty$. Then, as $n \to \infty$, the following three assertions are equivalent:
    \begin{enumerate}
        \item $\langle \mathcal{D}_\cdot F_n, \mathcal{D}_\cdot R^{-1}F_n \rangle \to 1$ in $L^2(\mathscr{C})$;
        \item $\|f_n \hat{\wedge}_r f_n\|_{2q-2r} \to 0$, for all $r = 1,\dots, q - 1$ such that $q+r$ is even;
        \item $\mathrm{Var}\left( \| \mathcal{D}_\cdot F_n \|^2 \right) \to 0$.
    \end{enumerate}
    Regarding the above conditions, $\langle f_{\cdot}, g_{\cdot} \rangle = \int f_t^* g_t\ dt$, where $f_t, g_t \in L^2(\mathscr{C})$ for a.e. $t\ge 0$, $\|\cdot\|^2 = \langle \cdot, \cdot\rangle$, and $\mathrm{Var}(F)=m(F^* F)-|m(F)|^2$ for $F\in L^2(\mathscr{C})$.
\end{thm}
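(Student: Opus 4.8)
The plan is to reduce all three conditions to a single statement about the chaos expansion of the positive self-adjoint operator $G_n := \|\mathcal{D}_\cdot F_n\|^2 = \int (\mathcal{D}_t F_n)^*(\mathcal{D}_t F_n)\,dt$, and then read off the equivalences from orthogonality of multiple integrals.

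First I would identify the action of $R=\delta\circ\mathcal{D}$ on the $q$-th chaos. By Definition~\ref{D and delta}, $\mathcal{D}_t(J_q(f_n)) = qJ_{q-1}(f_n(t,\cdot))$, and applying $\delta$ to the process $t\mapsto qJ_{q-1}(f_n(t,\cdot))$ antisymmetrizes $qf_n$ over all $q$ arguments, which returns $qJ_q(f_n)$ since $f_n\in\Lambda_q$ is already antisymmetric. Hence $RF_n = qF_n$ and $R^{-1}F_n = \tfrac1q F_n$, so $\mathcal{D}_t R^{-1}F_n = J_{q-1}(f_n(t,\cdot)) = \tfrac1q\mathcal{D}_t F_n$. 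This gives the crucial scalar relation $\|\mathcal{D}_\cdot F_n\|^2 = q\,\langle\mathcal{D}_\cdot F_n,\mathcal{D}_\cdot R^{-1}F_n\rangle$, so that (i) and (iii) concern the very same operator $G_n$ up to the factor $q$. Taking $F_n$ self-adjoint, i.e.\ $f_n = \overline{\overleftarrow{f_n}}$, as befits the normalisation $m(F_n^2)=m(F_n^*F_n)\to1$, it follows at once that $m(G_n)=q\,m(F_n^2)\to q$.

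Next I would compute the chaos expansion of $G_n$. Writing $G_n = q^2\int J_{q-1}(\overline{f_n(t,\overleftarrow\cdot)})\,J_{q-1}(f_n(t,\cdot))\,dt$ and applying the product formula of Proposition~\ref{anti-multiple formula} to the integrand, the $t$-integral of each contraction $\overline{f_n(t,\overleftarrow\cdot)}\,\hat\wedge_r\,f_n(t,\cdot)$ is evaluated by the second identity of Lemma~\ref{wedge lemma}, which turns it into $\overline{\overleftarrow{f_n}}\,\hat\wedge_{r+1}\,f_n = f_n\,\hat\wedge_{r+1}\,f_n$ (using self-adjointness). Re-indexing with $s=r+1$, this yields
$$
G_n = q^2\sum_{s=1}^{q}(s-1)!\binom{q-1}{s-1}^2 J_{2q-2s}\!\left(f_n\,\hat\wedge_s\,f_n\right),
$$
whose $s=q$ term is the scalar $m(G_n)$ and whose terms $1\le s\le q-1$ lie in the mutually orthogonal chaoses $\Lambda_{2q-2s}$. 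The first identity of Lemma~\ref{wedge lemma} gives $f_n\hat\wedge_s f_n = (-1)^{q+s}f_n\hat\wedge_s f_n$, so a term vanishes identically unless $q+s$ is even; this is exactly the parity restriction appearing in (ii). By orthogonality of the $J_{2q-2s}$ across distinct $s$, $\mathrm{Var}(G_n)=\|G_n-m(G_n)\|_{L^2(\mathscr{C})}^2$ equals a sum of positive multiples of $\|f_n\hat\wedge_s f_n\|_{2q-2s}^2$ over the surviving indices, whence (iii) $\Leftrightarrow$ (ii). For (i) $\Leftrightarrow$ (iii), the decomposition $\|G_n-q\|_{L^2(\mathscr{C})}^2 = \mathrm{Var}(G_n)+|m(G_n)-q|^2$ together with $m(G_n)\to q$ shows that $G_n\to q$ in $L^2$ (equivalently $\langle\mathcal{D}_\cdot F_n,\mathcal{D}_\cdot R^{-1}F_n\rangle\to1$) is the same as $\mathrm{Var}(G_n)\to0$.

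The main obstacle is the middle step: correctly combining Proposition~\ref{anti-multiple formula} with Lemma~\ref{wedge lemma} to obtain the contraction kernels $f_n\hat\wedge_s f_n$ with the right combinatorial coefficients and the correct orders $2q-2s$, and in particular verifying that the index shift $r\mapsto r+1$ coming from the $t$-integration lines the surviving terms up with the parity constraint $q+s$ even. Once this expansion is in hand, the equivalences are immediate consequences of chaos orthogonality and the convergence of the mean.
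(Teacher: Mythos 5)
Your proposal is correct and follows essentially the same route as the paper: both compute the chaos expansion of $\|\mathcal{D}_\cdot F_n\|^2$ via Proposition~\ref{anti-multiple formula} and the second identity of Lemma~\ref{wedge lemma} (with the index shift $r\mapsto r+1$), use the first identity of Lemma~\ref{wedge lemma} for the parity restriction, apply chaos orthogonality for the variance, and reduce (i) to (iii) through $\langle \mathcal{D}_\cdot F_n, \mathcal{D}_\cdot R^{-1}F_n\rangle = \tfrac{1}{q}\|\mathcal{D}_\cdot F_n\|^2$. Your write-up merely makes explicit two steps the paper leaves implicit, namely the verification $RF_n = qF_n$ and the decomposition $\|G_n - q\|_{L^2(\mathscr{C})}^2 = \mathrm{Var}(G_n) + |m(G_n)-q|^2$ with $m(G_n)\to q$.
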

\begin{proof}
Using Lemma~\ref{wedge lemma} and the fact that $F^* = F$ implies $\overline{\overleftarrow{f}} = f$, we can show the following equality:
\begin{align*}
        \|\mathcal{D}_\cdot F\|^2 &= \int(\mathcal{D}_t F)^* \mathcal{D}_t F\ dt\\
        &= q^2\sum_{r=0}^{q-1}r!\binom{q-1}{r}^2 J_{2q-2-2r}\left( \int \overline{f(t,\overleftarrow{\cdot})}\wedge_r f(t,\cdot)\ dt \right)\\
        &= q^2\sum_{r=0}^{q-1}r!\binom{q-1}{r}^2 J_{2q-2-2r}\left(\overline{\overleftarrow{f}}\hat{\wedge}_{r+1} f \right)\\
        &= q^2\sum_{r=1}^{q}(r-1)!\binom{q-1}{r-1}^2 J_{2q-2r}\left(f\hat{\wedge}_{r} f \right)\\
        &= qq!\|f\|_q^2+q^2\sum_{r=1}^{q-1}(r-1)!\binom{q-1}{r-1}^2 J_{2q-2r}\left(f\hat{\wedge}_{r} f \right)\\
        &= qm(F^2)+q^2\sum_{r=1}^{q-1}(r-1)!\binom{q-1}{r-1}^2 J_{2q-2r}\left(f\hat{\wedge}_{r} f \right)\\
        &= qm(F^2)+q^2\sum_{\substack{r=1 \\ q+r:\text{even}}}^{q-1}(r-1)!\binom{q-1}{r-1}^2 J_{2q-2r}\left(f\hat{\wedge}_{r} f \right).
\end{align*}
Thus,
$$
    \mathrm{Var}\left( \| \mathcal{D}_\cdot F \|^2 \right) = \sum_{\substack{r=1 \\ q+r:\text{even}}}^{q-1} r^2(r!)^2\binom{q}{r}^4(2q-2r)!\| f\hat{\wedge}_r f \|_{2q-2r}^2.
$$
Using the following equality:
$$
        \langle \mathcal{D}_\cdot F, \mathcal{D}_\cdot R^{-1}F \rangle = \frac{1}{q}\langle \mathcal{D}_\cdot F, \mathcal{D}_\cdot F \rangle = \frac{1}{q}\|\mathcal{D}_\cdot F\|^2,
$$
we obtain the desired results.
\end{proof}
\begin{rem}
    The reason why assertion $(\mathrm{i})$ in Theorem~\ref{claim1} can be considered as an expression of distributional convergence is that the following inequality holds (cf. \cite[Lem.~3.2]{BSW2}):
    $$
    \| e^{\sqrt{-1} tF}-e^{\sqrt{-1} t\Psi(z)} \|_{L^2(\mathscr{C})} \le |t| \| F -\Psi(z)\|_{L^2(\mathscr{C})} = |t|\left|1 - m\big(\langle \mathcal{D}_\cdot F, \mathcal{D}_\cdot R^{-1}F \rangle \big)\right|
    $$
    for $F=F^*\in \mathrm{Dom}(\mathcal{D})$ and $z\in L^2(\mathbb{R}_+)$ such that $\|z\|_1 = 1$.
    We are not sure how appropriate it is to treat the convergence of the characteristic functions of $F_n$ in $L^2(\mathscr{C})$ as distributional convergence. However, the above inequality provides a warrant for considering that assertion $(\mathrm{i})$ in Theorem~\ref{claim1}.
\end{rem}
\begin{lem}\label{anti-symmetric norm}
    For every $q \ge 2$ and $r \in \{ 0,\dots, q-1 \} $, let $f\in \Lambda_q$ satisfy $\overleftarrow{f}=f,\ \overline{f}=f$. Then
    \begin{equation}
        \| f\hat{\wedge}_r f \|^2_{2q-2r}= \frac{1+(-1)^{q+r}}{2(q-r)^2}\int \| f(t,\cdot)\hat{\wedge}_r f \|^2_{2q-2r-1}\ dt.\label{anti-symmetric norm eq.}
    \end{equation}
\end{lem}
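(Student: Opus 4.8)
The plan is to deduce the claimed $L^2$-identity from a single pointwise-in-$t$ relation between the two anti-symmetric functions involved, the slice $(f\hat{\wedge}_r f)(t,\cdot)$ of the full contraction and the contraction $f(t,\cdot)\hat{\wedge}_r f$ of the slice; once these are shown to be proportional, taking $L^2$-norms and integrating over $t$ will finish the argument. The bridge between the two functions is the derivation $\mathcal{D}_t$ applied to $J_q(f)^2$, which I would compute in two ways. Expanding the product first by Proposition~\ref{anti-multiple formula} gives $J_q(f)^2=\sum_{r=0}^{q}r!\binom{q}{r}^2 J_{2q-2r}(f\hat{\wedge}_r f)$, so by Definition~\ref{D and delta}, $\mathcal{D}_t(J_q(f)^2)=\sum_{r}r!\binom{q}{r}^2(2q-2r)\,J_{2q-2r-1}\big((f\hat{\wedge}_r f)(t,\cdot)\big)$. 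Alternatively, the Leibniz rule (Proposition~\ref{anti-Leibniz}), with $\beta(J_q(f))=(-1)^q J_q(f)$ and $\mathcal{D}_t J_q(f)=qJ_{q-1}(f(t,\cdot))$, gives $\mathcal{D}_t(J_q(f)^2)=q\big(J_{q-1}(f(t,\cdot))J_q(f)+(-1)^q J_q(f)J_{q-1}(f(t,\cdot))\big)$.

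Next I would expand both products on the right by Proposition~\ref{anti-multiple formula} and rewrite the second using the symmetry relation $f\hat{\wedge}_r f(t,\cdot)=(-1)^{r}f(t,\cdot)\hat{\wedge}_r f$ furnished by Lemma~\ref{wedge lemma} (here $q(q-1)$ is even), so that the two products merge into $q\sum_{r}r!\binom{q-1}{r}\binom{q}{r}\big(1+(-1)^{q+r}\big)J_{2q-2r-1}\big(f(t,\cdot)\hat{\wedge}_r f\big)$. Because the summands $J_{2q-2r-1}(\cdot)$ belong to mutually orthogonal chaoses for distinct $r$ and $J_n$ is injective on $\Lambda_n$, the two formulas for $\mathcal{D}_t(J_q(f)^2)$ may be equated componentwise. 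Cancelling the common factors using $q\binom{q-1}{r}=(q-r)\binom{q}{r}$ and $2q-2r=2(q-r)$ then yields the pointwise identity $(f\hat{\wedge}_r f)(t,\cdot)=\tfrac12\big(1+(-1)^{q+r}\big)\,f(t,\cdot)\hat{\wedge}_r f$, which is the crux of the lemma.

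To conclude I would square this identity in the $L^2(\mathbb{R}_+^{2q-2r-1})$-norm, integrate in $t$, and invoke Fubini in the form $\int\|(f\hat{\wedge}_r f)(t,\cdot)\|_{2q-2r-1}^2\,dt=\|f\hat{\wedge}_r f\|_{2q-2r}^2$; the scalar $\tfrac12(1+(-1)^{q+r})$ being idempotent, this produces a relation of the shape (\ref{anti-symmetric norm eq.}), and it vanishes precisely when $q+r$ is odd, consistently with $f\hat{\wedge}_r f=0$ forced by Lemma~\ref{wedge lemma} in that case. The hypotheses $\overleftarrow{f}=f$ and $\overline{f}=f$ serve only to make $F=J_q(f)$ self-adjoint, so that the Leibniz form with $\beta(F)=(-1)^q F$ applies cleanly; the pointwise identity itself does not use them. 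The step I expect to be delicate is the componentwise coefficient comparison: each of the two expansions carries its own factors of $(q-r)$ and its own sign, and one must follow the binomials and signs carefully to confirm that they assemble into the single parity factor $1+(-1)^{q+r}$ and to determine exactly the net power of $(q-r)$ left multiplying the integral, since any slip there alters precisely the combinatorial constant in (\ref{anti-symmetric norm eq.}).
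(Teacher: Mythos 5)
Your proof strategy is sound and genuinely different from the paper's. The paper establishes (\ref{anti-symmetric norm eq.}) by brute force: it writes $\| f\hat{\wedge}_r f \|^2_{2q-2r}$ as a double sum over $S_{2q-2r}\times S_{2q-2r}$, splits it into four cases according to the positions of $\sigma^{-1}(1)$ and $\rho^{-1}(1)$, and reduces each case to permutations of $S_{2q-2r-1}$. You instead outsource all of that combinatorics to results the paper has already proved: computing $\mathcal{D}_t(J_q(f)^2)$ once via Proposition~\ref{anti-multiple formula} plus Definition~\ref{D and delta}, and once via Proposition~\ref{anti-Leibniz} plus Lemma~\ref{wedge lemma} (with $f\hat{\wedge}_r f(t,\cdot)=(-1)^r f(t,\cdot)\hat{\wedge}_r f$, as $q(q-1)$ is even), then matching chaos components. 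Every step checks out: with $q\binom{q-1}{r}=(q-r)\binom{q}{r}$ the comparison gives the pointwise identity $(f\hat{\wedge}_r f)(t,\cdot)=\frac{1+(-1)^{q+r}}{2}\,f(t,\cdot)\hat{\wedge}_r f$ for a.e.\ $t$, and squaring, integrating in $t$ and using idempotency of $\frac{1}{2}(1+(-1)^{q+r})$ yields a norm identity of the form (\ref{anti-symmetric norm eq.}). Your approach is cleaner, and it does not even need the hypotheses $\overleftarrow{f}=f$, $\overline{f}=f$.

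However, the constant you obtain is $\frac{1+(-1)^{q+r}}{2}$, \emph{not} the $\frac{1+(-1)^{q+r}}{2(q-r)^2}$ stated in the lemma, and the delicate point you flagged resolves in your favour: your constant is the correct one, and the lemma as printed is wrong for every $r\le q-2$ (the two constants coincide only at $r=q-1$). A concrete check: take $q=4$, $r=0$, $f=e_1\wedge e_2\wedge e_3\wedge e_4+e_5\wedge e_6\wedge e_7\wedge e_8$ with $e_1,\dots,e_8$ real orthonormal, so that $\overleftarrow{f}=f$ and $\overline{f}=f$. Then $f\hat{\wedge}_0 f=2\,e_1\wedge\cdots\wedge e_8$, hence $\|f\hat{\wedge}_0 f\|^2_{8}=4/8!=1/(2\cdot 7!)$, while a direct computation using $f(t,\cdot)\hat{\wedge}_0 f=u(t,\cdot)\wedge v+v(t,\cdot)\wedge u$ (where $u,v$ denote the two summands) gives $\int\|f(t,\cdot)\hat{\wedge}_0 f\|^2_{7}\,dt=1/(2\cdot 7!)$; the ratio is $1=\frac{1+(-1)^{4}}{2}$, not $\frac{1+(-1)^4}{2\cdot 4^2}=\frac{1}{16}$. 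The flaw in the paper's proof is a fiber-counting slip: in each of the four cases, the passage from the sum over pairs $(\sigma,\rho)\in S_{2q-2r}^2$ to the sum over reduced pairs $(\sigma',\rho')\in S_{2q-2r-1}^2$ forgets the positions $i=\sigma^{-1}(1)$ and $j=\rho^{-1}(1)$, which range over $q-r$ values each; since after the sign simplification the summand no longer depends on $(i,j)$, each reduced pair is hit $(q-r)^2$ times, and this multiplicity is dropped. Restoring it turns the paper's total $\frac{2+2(-1)^{q+r}}{(2q-2r)^2}$ into $\frac{(q-r)^2(2+2(-1)^{q+r})}{(2q-2r)^2}=\frac{1+(-1)^{q+r}}{2}$, in agreement with your derivation. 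Note the correction propagates: in Lemma~\ref{fourth moment lem} the term $r\frac{1+(-1)^{q+r}}{(q-r)^2}$ inside the braces should read $r\big(1+(-1)^{q+r}\big)$, with the coefficients in the proof of Theorem~\ref{claim2} adjusted accordingly (the conclusions of Theorem~\ref{claim2} survive, since for the decomposable $f$ used there the terms touched by the correction vanish: the parity factor kills $r=1,3$ and $f(t,*)\hat{\wedge}_2 f=0$ kills $r=2$).
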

\begin{proof}
We compute
    \begin{align*}
        \| f\hat{\wedge}_r f \|^2_{2q-2r} 
        &= \frac{1}{((2q-2r)!)^2}\sum_{\sigma, \rho \in S_{2q-2r}} \sgn(\sigma)\sgn(\rho)\\
        &\quad\times \int f(x_{\sigma(1)}, \dots, x_{\sigma(q-r)}, \overrightarrow{s_r})f(\overleftarrow{s_r}, x_{\sigma(q-r+1)}, \dots, x_{\sigma(2q-2r)})\\
        &\qquad\times f(x_{\rho(1)}, \dots, x_{\rho(q-r)}, \overrightarrow{s'_r})f(\overleftarrow{s'_r}, x_{\rho(q-r+1)}, \dots, x_{\rho(2q-2r)})\ d{\bf s}d{\bf s'}d{\bf x}.
    \end{align*}
    Here, we introduce the notations. For $\sigma^{-1}(1)=i\in \{ 1, \dots, 2q-2r \}$, we define 
    $$\sigma' =
\left( \begin{array}{ccc}
1 & \overset{i}{\Check{\cdots}} & 2q-2r \\
\sigma(1) & \overset{i}{\Check{\cdots}} & \sigma(2q-2r) \\
\end{array}
\right)\in S_{2q-2r-1}.$$
It should be noted, though not explicitly written, that $\sigma'$ depends on $i$ as well. Under this correspondence, there is a bijection given by
$$
\begin{array}{ccc}
    S_{2q-2r-1}\times\{ 1, \dots, 2q-2r \}&\longrightarrow&S_{2q-2r-1}\\
    (\sigma', i)&\longmapsto&\sigma.
\end{array}
$$
For $\rho$, we similarly define $(\rho',j)$. Let $\sgn(\sigma')$ denote the sign when permuting $\sigma(1), \overset{i}{\Check{\dots}}, \sigma(2q-2r)$ in ascending order. It holds that $\sgn(\sigma)=(-1)^{i+1}\sgn(\sigma')$. We define $\sigma''$ as the permutation obtained by reversing the order of $\sigma'$ as follows:
$$\sigma'' =
\left( \begin{array}{ccc}
1 & \overset{i}{\Check{\cdots}} & 2q-2r \\
\sigma(2q-2r) & \overset{2q-2r-i+1}{\Check{\cdots}} & \sigma(1) \\
\end{array}
\right)\in S_{2q-2r-1.}$$
The sign of $\sigma''$ is defined in the same way as $\sgn(\sigma')$.
It holds that $$\sgn(\sigma'')=(-1)^{(2q-2r-2)(2q-2r-1)/2}\sgn(\sigma')=(-1)^{q+r+1}\sgn(\sigma').$$ Under these preparations, we separate the pairs $\sigma, \rho$ into the following four patterns:
\begin{enumerate}
    \item $\sigma^{-1}(1), \rho^{-1}(1)\in \{ 1, \dots, q-r \}$;
    \item $\sigma^{-1}(1)\in \{ q-r+1, \dots, 2q-2r \},\ \rho^{-1}(1)\in\{ 1, \dots, q-r \}$;
    \item $\sigma^{-1}(1)\in\{ 1, \dots, q-r \},\ \rho^{-1}(1)\in \{ q-r+1, \dots, 2q-2r \}$;
    \item $\sigma^{-1}(1), \rho^{-1}(1)\in \{ q-r+1, \dots, 2q-2r \}$.
\end{enumerate}
Accordingly,
$$
\sum_{\sigma,\rho\in S_{2q-2r}}=\sum_{(\mathrm{i})}+\sum_{(\mathrm{ii})}+\sum_{(\mathrm{iii})}+\sum_{(\mathrm{iv})}.
$$
We first compute the part corresponding to $\sum_{(\mathrm{i})}$.
\begin{align*}
    &\frac{1}{((2q-2r)!)^2}\sum_{(\mathrm{i})} \sgn(\sigma)\sgn(\rho)\\
    &\quad\times \int f(x_{\sigma(1)}, \dots, x_{\sigma(i-1)}, x_1, x_{\sigma(i+1)} \dots, x_{\sigma(q-r)}, \overrightarrow{s_r})f(\overleftarrow{s_r}, x_{\sigma(q-r+1)}, \dots, x_{\sigma(2q-2r)})\\
    &\qquad\times f(x_{\rho(1)}, \dots, x_{\rho(j-1)}, x_1, x_{\rho(j+1)} \dots, x_{\rho(q-r)}, \overrightarrow{s'_r})f(\overleftarrow{s'_r}, x_{\rho(q-r+1)}, \dots, x_{\rho(2q-2r)})\ d{\bf s}d{\bf s'}d{\bf x}\\
    &=\frac{1}{((2q-2r)!)^2}\sum_{(\mathrm{i})} (-1)^{i+1}(-1)^{j+1}\sgn(\sigma')\sgn(\rho')\\
    &\quad\times \int (-1)^{i-1}f(x_1, x_{\sigma'(1)}, \dots, x_{\sigma'(q-r)}, \overrightarrow{s_r})f(\overleftarrow{s_r}, x_{\sigma'(q-r+1)}, \dots, x_{\sigma'(2q-2r)})\\
    &\qquad\times (-1)^{j-1}f(x_1, x_{\rho'(1)}, \dots, x_{\rho'(q-r)}, \overrightarrow{s'_r})f(\overleftarrow{s'_r}, x_{\rho'(q-r+1)}, \dots, x_{\rho'(2q-2r)})\ d{\bf s}d{\bf s'}d{\bf x}\\
    &=\frac{1}{(2q-2r)^2}\frac{1}{((2q-2r-1)!)^2}\sum_{\sigma,\rho \in S_{2q-2r-1}} \sgn(\sigma)\sgn(\rho)\\
    &\quad\times \int f(t, x_{\sigma(1)}, \dots, x_{\sigma(q-r-1)}, \overrightarrow{s_r})f(\overleftarrow{s_r}, x_{\sigma(q-r)}, \dots, x_{\sigma(2q-2r-1)})\\
    &\qquad\times f(t, x_{\rho(1)}, \dots, x_{\rho(q-r-1)}, \overrightarrow{s'_r})f(\overleftarrow{s'_r}, x_{\rho(q-r)}, \dots, x_{\rho(2q-2r-1)})\ d{\bf s}d{\bf s'}d{\bf x}dt\\
    &=\frac{1}{(2q-2r)^2}\int\| f(t,\cdot)\hat{\wedge}_r f \|^2_{2q-2r-1}\ dt.
\end{align*}
For $\sum_{(\mathrm{ii})}$,
\begin{align*}
    &\frac{1}{((2q-2r)!)^2}\sum_{(\mathrm{ii})} \sgn(\sigma)\sgn(\rho)\\
    &\quad\times \int f(x_{\sigma(1)}, \dots, x_{\sigma(q-r)}, \overrightarrow{s_r})f(\overleftarrow{s_r}, x_{\sigma(q-r+1)}, \dots, x_{\sigma(i-1)}, x_1, x_{\sigma(i+1)} \dots, x_{\sigma(2q-2r)})\\
    &\qquad\times f(x_{\rho(1)}, \dots, x_{\rho(j-1)}, x_1, x_{\rho(j+1)} \dots, x_{\rho(q-r)}, \overrightarrow{s'_r})f(\overleftarrow{s'_r}, x_{\rho(q-r+1)}, \dots, x_{\rho(2q-2r)})\ d{\bf s}d{\bf s'}d{\bf x}\\
    &=\frac{1}{((2q-2r)!)^2}\sum_{(\mathrm{ii})} (-1)^{i+1}(-1)^{j+1}\sgn(\sigma')\sgn(\rho')\\
    &\quad\times \int f(x_{\sigma'(1)}, \dots, x_{\sigma'(q-r)}, \overrightarrow{s_r})(-1)^{2q-2r-i}f(\overleftarrow{s_r}, x_{\sigma'(q-r+1)}, \dots, x_{\sigma'(2q-2r)}, x_1)\ \\
    &\qquad\times (-1)^{j-1}f(x_1, x_{\rho'(1)}, \dots, x_{\rho(q-r)}, \overrightarrow{s'_r})f(\overleftarrow{s'_r}, x_{\rho'(q-r+1)}, \dots, x_{\rho(2q-2r)})\ d{\bf s}d{\bf s'}d{\bf x}\\
    &=\frac{1}{((2q-2r)!)^2}\sum_{(\mathrm{ii})} (-1)^{i+1}(-1)^{j+1}(-1)^{2q-2r-i}(-1)^{j-1}(-1)^{q+r+1}\sgn(\sigma'')\sgn(\rho')\\
    &\quad\times \int f(x_1, x_{\sigma''(1)}, \dots, x_{\sigma''(q-r)}, \overrightarrow{s_r})f(\overleftarrow{s_r}, x_{\sigma''(q-r+1)}, \dots, x_{\sigma''(2q-2r)}, x_1)\\
    &\qquad\times f(x_1, x_{\rho'(1)}, \dots, x_{\rho(q-r)}, \overrightarrow{s'_r})f(\overleftarrow{s'_r}, x_{\rho'(q-r+1)}, \dots, x_{\rho(2q-2r)})\ d{\bf s}d{\bf s'}d{\bf x}\\
    &=\frac{(-1)^{q+r}}{(2q-2r)^2}\int\| f(t,\cdot)\hat{\wedge}_r f \|^2_{2q-2r-1}\ dt.
\end{align*}
Similarly, the sums $\sum_{(\mathrm{iii})},\ \sum_{(\mathrm{iv})}$ are calculated to be 
\begin{gather*}
    \frac{(-1)^{q+r}}{(2q-2r)^2}\int\| f(t,\cdot)\hat{\wedge}_r f \|^2_{2q-2r-1}\ dt,\\ 
    \frac{1}{(2q-2r)^2}\int\| f(t,\cdot)\hat{\wedge}_r f \|^2_{2q-2r-1}\ dt,
\end{gather*}
respectively. Hence, we obtain $(\ref{anti-symmetric norm eq.})$.
\end{proof}
\begin{lem}\label{fourth moment lem}
    Let $q \ge 2$ and let $f \in \Lambda_q$ be a real-valued function such that $\overleftarrow{f} = f$. Set $F = J_q(f)$. Then we have
    \begin{align*}
        m(F^4) &= 2m(F^2)^2 + \frac{(-1)^q}{2} (2q)! \int \| f(t, *) \wedge f \|^2_{2q-1} \ dt\\
        &\quad + \sum^{q-1}_{r=1} (r!)^2 (2q-2r)! \binom{p}{r}^4 \frac{1}{q} \left\{ r \frac{1+(-1)^{q+r}}{(q-r)^2} + \frac{(-1)^{q+r}}{2} (q-r) \right\} \int \| f(t, *) \hat{\wedge}_r f \|^2_{2q-2r-1} \ dt.
    \end{align*}
\end{lem}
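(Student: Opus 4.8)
The plan is to avoid expanding $m(F^4)$ as a single fourth-order Wick sum and instead to exploit the fact that $F=J_q(f)$ lives in one chaos. Since $\delta\circ\mathcal{D}$ acts as multiplication by $q$ on the $q$-th chaos (this is the identity $R^{-1}F=\tfrac1q F$ used in the proof of Theorem~\ref{claim1}), we have $F=\tfrac1q\,\delta(\mathcal{D}_\cdot F)$. Using $F=F^{*}$ (which holds under the hypotheses $\overleftarrow f=f$ and $f$ real) together with the duality between $\mathcal{D}$ and $\delta$, I would first write
\[
m(F^4)=m\bigl(F^{3}\cdot\tfrac1q\,\delta(\mathcal{D}_\cdot F)\bigr)=\tfrac1q\int m\bigl((\mathcal{D}_tF^{3})^{*}\,\mathcal{D}_tF\bigr)\,dt .
\]
This already produces the factor $1/q$ visible in the statement and reduces the whole computation to evaluating $\mathcal{D}_tF^{3}$.

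Next I would differentiate $F^{3}$ with the twisted Leibniz rule of Proposition~\ref{anti-Leibniz}, applied twice. Using $\beta(F)=(-1)^{q}F$ (since $\beta$ acts as $(-1)^{n}$ on the $n$-th chaos) and $\beta(F)^{2}=F^{2}$, this yields the three terms
\[
\mathcal{D}_tF^{3}=(\mathcal{D}_tF)F^{2}+(-1)^{q}F(\mathcal{D}_tF)F+F^{2}(\mathcal{D}_tF).
\]
The decisive point is that, unlike the commutative situation where the right-hand side collapses to $3F^{2}\mathcal{D}_tF$, here $F$ and $\mathcal{D}_tF$ do not commute, so the middle term genuinely survives; this non-collapse is precisely the mechanism by which the fermionic fourth moment departs from $3m(F^{2})^{2}$, and it is what the \emph{even}-parity restriction in Theorem~\ref{claim1} fails to see. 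Substituting, and using traciality and self-adjointness of $m$, I would group the two outer terms (which recombine into an expression governed by $\|\mathcal{D}_\cdot F\|^{2}$, already computed in Theorem~\ref{claim1}) and treat the twisted middle term $(-1)^{q}m\bigl(F(\mathcal{D}_tF)^{*}F\,\mathcal{D}_tF\bigr)$ separately.

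Each of the three traces I would then evaluate by inserting $F=J_q(f)$, $\mathcal{D}_tF=qJ_{q-1}(f(t,\cdot))$ and the chaos expansion $F^{2}=\sum_{r}r!\binom qr^{2}J_{2q-2r}(f\hat\wedge_r f)$ coming from Proposition~\ref{anti-multiple formula}, and extracting the scalar ($J_0$) component of the resulting product, since $m$ annihilates all higher chaos. The outer terms should produce, after Lemma~\ref{anti-symmetric norm} is used to pass from $\|f\hat\wedge_r f\|^{2}_{2q-2r}$ to $\int\|f(t,\cdot)\hat\wedge_r f\|^{2}_{2q-2r-1}\,dt$, both the leading term $(2m(F^{2}))^{2}$ and the pieces $r\,\frac{1+(-1)^{q+r}}{(q-r)^{2}}$ in the sum; the twisted middle term, handled with Lemma~\ref{wedge lemma} to control the argument reversals, should produce the remaining $\frac{(-1)^{q+r}}{2}(q-r)$ piece inside the sum together with the separate $\frac{(-1)^{q}}{2}(2q)!\int\|f(t,\cdot)\wedge f\|^{2}_{2q-1}\,dt$ contribution.

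The main obstacle will be the bookkeeping for the twisted middle trace $(-1)^{q}m\bigl(F(\mathcal{D}_tF)^{*}F\,\mathcal{D}_tF\bigr)$: because its two copies of $\mathcal{D}_tF$ are separated by factors of $F$, evaluating it through the product formula forces a reversal of arguments (the $\overleftarrow{\cdot}$ operation and the resulting $(-1)^{(p+q-2r)(p+q-2r-1)/2}=(-1)^{q+r}$ signs of Lemma~\ref{wedge lemma}) and a careful accounting of which contractions survive. I expect these sign factors to be reconciled exactly as in the proof of Lemma~\ref{anti-symmetric norm}, and the combinatorial weights $(r!)^{2}\binom qr^{4}(2q-2r)!$ to emerge directly from the binomial coefficients in the product formula, so that collecting the three contributions term by term in $r$ gives the asserted identity.
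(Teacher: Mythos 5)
Your proposal follows essentially the same route as the paper's own proof: the identity $F=\tfrac1q\,\delta(\mathcal{D}_\cdot F)$ combined with $\mathcal{D}$--$\delta$ duality, the twisted Leibniz expansion $\mathcal{D}_t(F^3)=(\mathcal{D}_tF)F^2+(-1)^qF(\mathcal{D}_tF)F+F^2(\mathcal{D}_tF)$, traciality of $m$ to group the two outer terms and isolate the twisted middle term, and then the product formula together with Lemma~\ref{wedge lemma} and Lemma~\ref{anti-symmetric norm} to evaluate the contractions and recombine them. Your attribution of the pieces is also exactly how the paper's computation pans out: the outer terms yield the leading term and the $r\,\frac{1+(-1)^{q+r}}{(q-r)^2}$ contributions, while the middle term yields the $\frac{(-1)^{q+r}}{2}(q-r)$ contributions together with the separate $\frac{(-1)^{q}}{2}(2q)!\int\|f(t,\cdot)\wedge f\|^2_{2q-1}\,dt$ term (its $r=0$ part).
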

\begin{proof}
    Note that 
    \begin{gather*}
    \mathcal{D}_t(F^3) = \mathcal{D}_t (F) F^2 + (-1)^q F \mathcal{D}_t (F) F + F^2 \mathcal{D}_t (F),\\
    m(\delta(F_t) G) = \int m( \beta(F_t) \mathcal{D}_t (G) )\ dt. 
    \end{gather*}
    Then
    \begin{align*}
    m(F^4) &= m\big(\mathcal{R} \circ \mathcal{R}^{-1}(F)F^3\big)\\
    &= \frac{1}{q} m\big(\delta \circ \mathcal{D}_t (F) F^3\big)\\
    &= \frac{(-1)^{q-1}}{q} \int m\big( \mathcal{D}_t (F) \mathcal{D}_t (F^3) \big)\ dt\\
    &= \frac{(-1)^{q-1}}{q} \int m\big( \mathcal{D}_t (F) \mathcal{D}_t (F) F^2 + (-1)^q \mathcal{D}_t (F) F \mathcal{D}_t (F) F + \mathcal{D}_t (F) F^2 \mathcal{D}_t (F) \big) \ dt\\
    &= \frac{(-1)^{q-1}}{q} \int \left\{2 m\big((\mathcal{D}_t (F))^2 F^2\big) + (-1)^q m\big( (\mathcal{D}_t (F) F)^2 \big)\right\}\ dt.
\end{align*}
In the last equality, the cyclicity of $m$ was used.
We calculate $ m\big((\mathcal{D}_t (F))^2 F^2\big) $. As a preliminary step, we perform calculations similar to those in the proof of Lemma~\ref{wedge lemma}.
\begin{align*}
    \int f(t, *) &\wedge_r f(t, *) (x_1, \dots, x_{2q-2-2r})\ dt\\
    &= \int f( t, x_1, \dots, x_{q-1-r}, \overrightarrow{s_r} ) f( t, \overleftarrow{s_r}, x_{q-r}, \dots, x_{2q-2-2r} )\ d{\bf s} dt\\
    &= \int (-1)^{q-1} f( x_1, \dots, x_{q-1-r}, \overrightarrow{s_r}, t ) f( t, \overleftarrow{s_r}, x_{q-r}, \dots, x_{2q-2-2r} )\ d{\bf s} dt\\
    &= (-1)^{q-1} f \wedge_{r+1} f.
\end{align*}
Thus, following the initial steps in the proof of Theorem~\ref{claim1}, we have
\begin{align*}
    \int ( \mathcal{D}_t (F) )^2 \ dt &= (-1)^{q-1} q^2 \sum^q_{r=1} (r-1)! \binom{q}{r}^2 J_{2q-2r}(f \hat{\wedge}_r f), \\
    F^2 &= \sum^q_{r=0} r! \binom{q}{r}^2 J_{2q-2r}(f \hat{\wedge}_r f).
\end{align*}
Therefore, 
\begin{align*}
    \int m\big( (\mathcal{D}_t (F) )^2 F^2\big)\ dt &= \int m\big((F^2)^* (\mathcal{D}_t (F) )^2 \big)\ dt\\
    & = (-1)^{q-1} q^2 \sum^q_{r=1} r! (r-1)! \binom{q}{r}^2 \binom{q-1}{r-1}^2 (2q-2r)! \| f \hat{\wedge}_r f \|^2_{2q-2r}.
\end{align*}
Next, we calculate $\int m( ( \mathcal{D}_t (F) F )^2 )\ dt$. Noting that $q(q-1) \equiv 0 \bmod 2$ and using Lemma~\ref{wedge lemma}, we obtain
\begin{align*}
    \overline{\overleftarrow{f}} \wedge_r \overline{f}( t, \overleftarrow{*} )
    &= f \wedge_r f(*, t) = (-1)^{r} f(*, t) \wedge_r f \\
    &= (-1)^r (-1)^{q-1} f(t, *) \wedge_r f.
\end{align*}
Given the above, we have
\begin{align*}
    F^* \mathcal{D}_t(F)^* 
    &= q \sum^{q-1}_{r=0} r! \binom{q}{r} \binom{q-1}{r} J_{2q-1-2r}(\overline{\overleftarrow{f}} \wedge_r \overline{f}( t, \overleftarrow{*} ))\\
    &= q \sum^{q-1}_{r=0} r! \binom{q}{r} \binom{q-1}{r} (-1)^{q-1+r}J_{2q-1-2r}(f(t, *) \hat{\wedge}_r f).
\end{align*}
Therefore,
\begin{align*}
    \int m\big( ( \mathcal{D}_t (F) F )^2 \big)\ dt &= \int m\big( ( F^* \mathcal{D}_t(F)^* )^* ( \mathcal{D}_t (F) F ) \big)\ dt\\
    &= q^2 \sum^{q-1}_{r=0} (-1)^{r+1-q} (r!)^2 \binom{q}{r}^2 \binom{q-1}{r}^2 (2q-2r-1)! \int \|f(t, *) \hat{\wedge}_r f\|^2_{2q-2r-1}\ dt.
\end{align*}
Thus,
\begin{align*}
    m(F^4) &= \frac{(-1)^{q-1}}{q} \left\{ 2 (-1)^{q-1} q^2 \sum^q_{r=1} r! (r-1)! \binom{q}{r}^2 \binom{q-1}{r-1}^2 (2q-2r)! \| f \hat{\wedge}_r f \|^2_{2q-2r} \right.\\
    &\left. \quad + (-1)^q q^2 \sum^{q-1}_{r=0} (-1)^{r+1-q} (r!)^2 \binom{q}{r}^2 \binom{q-1}{r}^2 (2q-2r-1)! \int \|f(t, *) \hat{\wedge}_r f\|^2_{2q-2r-1}\ dt \right\}\\
    &= 2q \sum^q_{r=1} r! (r-1)! \binom{q}{r}^2 \binom{q-1}{r-1}^2 (2q-2r)! \| f \hat{\wedge}_r f \|^2_{2q-2r} \\
    &\quad + q \sum^{q-1}_{r=0} (-1)^{q+r} (r!)^2 \binom{q}{r}^2 \binom{q-1}{r}^2 (2q-2r-1)! \int \|f(t, *) \hat{\wedge}_r f\|^2_{2q-2r-1}\ dt\\
    &= 2 \sum^{q-1}_{r=1} (r!)^2 \binom{q}{r}^4 \frac{r}{q} (2q-2r)! \frac{1+(-1)^{q+r}}{2(q-r)^2} \int \|f(t, *) \hat{\wedge}_r f\|^2_{2q-2r-1}\ dt\\
    &\quad + \sum^{q-1}_{r=1} (r!)^2 \binom{q}{r}^4 \frac{(-1)^{q+r}}{4q} (2q-2r) (2q-2r)! \int \|f(t, *) \hat{\wedge}_r f\|^2_{2q-2r-1}\ dt\\
    &\qquad+ 2m(F^2)^2 + \frac{(-1)^q}{2} (2q)! \int \|f(t, *) \wedge f\|^2_{2q-1}\ dt,
\end{align*}
where we used $\binom{q-1}{r-1} = \frac{r}{q} \binom{q}{r}, \binom{q-1}{r} = \frac{q-r}{q} \binom{q}{r}$, and Lemma~\ref{anti-symmetric norm} for the last equality.
\end{proof}
In the usual fourth moment theorem, the counterpart of
$$
2m(F^2)^2 + \frac{(-1)^q}{2} (2q!) \int \| f(t, *) \wedge f \|^2_{2q-1} \ dt
$$
that appears in the above lemma is $3E[F^2]^2$ {\cite[Lem.~5.2.4]{NP12}}.
Hence, it seems natural to think of the $\sum^{q-1}_{r=1}$ part in Lemma~\ref{fourth moment lem} as the counterpart of $E[F^4]-3E[F^2]^2$ on the Clifford algebra. 
The following theorem shows that it is difficult to handle convergence with the fourth moment.
\begin{thm}\label{claim2}
    Let $f_1, \dots, f_4 \in L^2(\mathbb{R}_+)$ be non-zero real valued functions such that $\langle f_i, f_j \rangle_{L^2(\mathbb{R}_+)} =0$ for $ i \neq j$. We set $ f = f_1\wedge \dots \wedge f_4$, $ F=J_4(f) $, and
    \begin{equation*}
        K(F) \coloneq m(F^4) - 2m(F^2)^2 - \frac{(-1)^q}{2} (2q!) \int \| f(t, *) \wedge f \|^2_{2q-1} \ dt.
    \end{equation*}
    Then we have
    \begin{enumerate}
    \item $K(F) \neq 0 $,
    \item $\| f \hat{\wedge}_2 f \|_4 = 0.$
    \end{enumerate}
\end{thm}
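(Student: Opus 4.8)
The plan is to read $K(F)$ straight off Lemma~\ref{fourth moment lem}. By construction $K(F)$ is exactly the residual sum $\sum_{r=1}^{q-1}$ displayed there, so with $q=4$ (noting that $f=f_1\wedge f_2\wedge f_3\wedge f_4$ is real and palindromic, $\overline f=f$ and $\overleftarrow f=f$, so the lemma applies),
\[
K(F)=\sum_{r=1}^{3}(r!)^2(2q-2r)!\binom{q}{r}^4\frac1q\Big\{\,r\,\tfrac{1+(-1)^{q+r}}{(q-r)^2}+\tfrac{(-1)^{q+r}}{2}(q-r)\,\Big\}\int\|f(t,*)\hat\wedge_r f\|^2_{2q-2r-1}\,dt .
\]
Because every term is homogeneous of degree four in $f$, I would first rescale and assume $\langle f_i,f_i\rangle_{L^2(\mathbb{R}_+)}=1$, so that $\{f_i\}$ is orthonormal; this does not affect whether $K(F)$ vanishes. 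I would then inspect the braces by parity: for $r=1$ and $r=3$ the quantity $q+r$ is odd, so the first summand drops and the brace equals $\tfrac{(-1)^{q+r}}{2}(q-r)<0$, while for $r=2$ (parity even) the brace is the positive number $2$. Thus the $r=1,3$ coefficients are strictly negative and the $r=2$ coefficient is strictly positive.

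Assertion (ii) is what removes the single positive ($r=2$) term. To prove $\|f\hat\wedge_2 f\|_4=0$ I would argue structurally from
\[
f\wedge_2 f(t_1,t_2,t_3,t_4)=\int f(t_1,t_2,s_2,s_1)\,f(s_1,s_2,t_3,t_4)\,ds_1\,ds_2 .
\]
Expanding each factor in the orthonormal basis, the two $s$-integrals each produce a Kronecker delta, forcing the pair of factors occupying the integrated slots of the first copy of $f$ to coincide with that of the second copy; the complementary pair $\{f_a,f_b\}$ then fills \emph{both} free slot-pairs $(t_1,t_2)$ and $(t_3,t_4)$. Hence every surviving summand of $f\wedge_2 f$ is a product in which each of $f_a,f_b$ occurs twice, and the full antisymmetrization of any product containing a repeated factor is zero (it is a multiple of $f_a\wedge f_b\wedge f_a\wedge f_b=0$, as pairing $\sigma$ with $\sigma\cdot(1\,3)$ shows). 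Therefore $f\hat\wedge_2 f=0$, which is (ii); by Lemma~\ref{anti-symmetric norm} this also yields $\int\|f(t,*)\hat\wedge_2 f\|_3^2\,dt=0$, so the $r=2$ term of $K(F)$ disappears.

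For (i) it then suffices to show the two remaining non-positive terms are not both zero, which I would do by computing the $r=3$ contraction, the cleanest because its output has degree one (so $\hat\wedge_3=\wedge_3$ there). Writing $f(t,*)$ and $f$ as $4\times4$ determinants in the $f_i$, expanding along the $t$- and $u$-columns, and carrying out the three $s$-integrals by orthonormality, the cross terms collapse and one obtains $f(t,*)\wedge_3 f(u)=\kappa\sum_{i=1}^4 f_i(t)f_i(u)$ with an explicit nonzero constant $\kappa$. Consequently
\[
\int\|f(t,*)\hat\wedge_3 f\|_1^2\,dt=\kappa^2\sum_{i,j}\langle f_i,f_j\rangle_{L^2(\mathbb{R}_+)}^2=4\kappa^2>0,
\]
so the $r=3$ term is strictly negative; combined with the non-positive $r=1$ term and the vanishing $r=2$ term this gives $K(F)<0$, in particular $K(F)\neq0$.

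The main obstacle is the sign bookkeeping in this last computation: one must track carefully the signs coming from the antisymmetry of $f$, from the reversed arguments $\overleftarrow{\,\cdot\,}$ in the definition~\eqref{arrow} of $\wedge_r$, and from the two cofactor expansions, so as to be certain that the surviving contributions add coherently to a nonzero $\kappa$ rather than cancelling. The only other delicate point is the parity analysis of the braces (making the $r=1,3$ coefficients negative and the $r=2$ coefficient positive), which is what lets assertion (ii) deliver $K(F)\neq 0$ so cleanly.
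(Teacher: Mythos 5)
Your proof is correct, and although it follows the paper's overall skeleton (expressing $K(F)$ as the $\sum_{r=1}^{3}$ residue of Lemma~\ref{fourth moment lem}, checking that the $r=1,3$ coefficients are negative while the $r=2$ coefficient is positive, and computing the $r=3$ contraction by orthogonality), it settles the decisive step --- the vanishing of the $r=2$ term --- by a genuinely different route. The paper proves the \emph{pointwise} statement $f(t,*)\hat{\wedge}_2 f = 0$ for a.e.\ $t$ by explicit expansion: it reduces to $F_0+F_1+F_2=0$ and verifies this with the $g_{ij}$ bookkeeping, which is the bulk of its proof; assertion (ii) and the vanishing of the $r=2$ term then follow via Lemma~\ref{anti-symmetric norm}. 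You run the logic in the opposite direction: you prove the \emph{integrated} statement (ii) first, by the structural observation that the two Kronecker deltas force every surviving summand of $f\wedge_2 f$ to contain the uncontracted pair $f_a, f_b$ twice, so that full antisymmetrization kills it; you then invert Lemma~\ref{anti-symmetric norm} to get $\int \| f(t,*)\hat{\wedge}_2 f \|^2_3\, dt = 0$. That inversion is valid only because the coefficient $\frac{1+(-1)^{q+r}}{2(q-r)^2}$ equals $\frac{1}{4}\neq 0$ at $q=4$, $r=2$; for odd $q+r$ the lemma degenerates to $0=0$ and yields nothing, so you should state this nonvanishing explicitly --- it is the one point where your argument silently uses the parity hypothesis. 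The trade-off: the paper's longer computation yields the stronger pointwise conclusion, while your argument is far shorter and explains structurally \emph{why} the term vanishes. Your $r=3$ computation agrees with the paper's, which finds $(4!)^2\, f(t,*)\hat{\wedge}_3 f(x) = 3!\sum_{i=1}^{4} f_i(t) f_i(x)$ for orthonormal $f_i$; the sign cancellation you flag as the main risk cannot occur, because every surviving pair $(\sigma,\tau)$ in the expansion satisfies $\tau = \sigma \circ (1\,4)(2\,3)$, an even permutation, so $\sgn(\sigma)\sgn(\tau)=+1$ and all contributions add coherently, giving $\kappa = 3!/(4!)^2 > 0$.
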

\begin{proof}
    From Lemma~\ref{fourth moment lem}, we have
    \begin{align*}
        K(F) &= (1!)^2\cdot 4^4\cdot 6!\cdot \frac{1}{4} \left\{ 1\cdot\frac{0}{3^2} + \frac{-1}{2}\cdot 3 \right\} \int \| f(t, *) \hat{\wedge}_1 f \|^2_5 \ dt \\
        &\quad + (2!)^2\cdot 6^4\cdot 4!\cdot \frac{1}{4} \left\{ 2\cdot\frac{2}{2^2} + \frac{1}{2}\cdot 2 \right\} \int \| f(t, *) \hat{\wedge}_2 f \|^2_3 \ dt\\
        &\qquad + (3!)^2\cdot 4^4\cdot 2!\cdot \frac{1}{4} \left\{ 3\cdot\frac{0}{1^2} + \frac{-1}{2}\cdot 1 \right\} \int \| f(t, *) \hat{\wedge}_3 f \|^2_1 \ dt.
    \end{align*}
    Therefore, in order to prove Theorem~\ref{claim2}, it is sufficient to show
    \begin{gather}
     f(t, *) \hat{\wedge}_2 f  = 0 \quad\text{for}\ \mathrm{a.e.}\ t\ge 0,\label{1}\\
    \int \| f(t, *) \hat{\wedge}_3 f \|^2_1 \ dt \neq 0.\label{2}
\end{gather}
First, we prove (\ref{2}):
\begin{align*}
    \int \| f(t, *) \hat{\wedge}_3 f \|^2_1 \ dt 
    &= \int f( t, \overrightarrow{s_3} ) f( \overleftarrow{s_3}, x ) 
    f( t, \overrightarrow{s'_3} ) f( \overleftarrow{s'_3}, x )\ d{\bf s}d{\bf s'}dxdt,\\
    (4!)^2 f(t, *) \hat{\wedge}_3 f (x)
    &= (4!)^2 \int f( t, \overrightarrow{s_3} ) f( \overleftarrow{s_3}, x )\ d{\bf s}\\
    &= (4!)^2 \int f( \overleftarrow{s_3}, t ) f( \overleftarrow{s_3}, x )\ d{\bf s}.
\end{align*}
Furthermore,
\begin{align*}
    4!f(x_1, x_2, x_3, x_4) &= \sum_{\sigma \in S_4} \sgn(\sigma) f_1(x_{\sigma(1)})f_2(x_{\sigma({2})})f_3(x_{\sigma({3})})f_4(x_{\sigma({4})})\\
    &= \sum^4_{i=1} \sum_{\sigma: \sigma(i)=4} \sgn(\sigma) f_1(x_{\sigma(1)}) \cdots f_i(x_4) \cdots f_4(x_{\sigma(4)}).
\end{align*}
Given that $\langle f_i, f_j \rangle_{L^2(\mathbb{R}_+)}=0$ for $i \neq j$,
$$
    (4!)^2 f(t, *) \hat{\wedge}_3 f (x) = 3! \sum^4_{i=1} \| f_1 \|^2 \overset{i}{\Check{\cdots}} \| f_4 \|^2 f_i(t) f_i(x).
    $$
    Therefore
    $$
    \int \| f(t, *) \hat{\wedge}_3 f\|^2_1\ dt = \frac{(3!)^2 \cdot 4}{(4!)^4} \| f_1 \|^4 \cdots \| f_4 \|^4 \neq 0.$$
Next, we prove (\ref{1}).
\begin{align*}
    f(t, *) \hat{\wedge}_2 f 
    &= \frac{1}{3!} \int \sum_{\sigma \in S_3} \sgn(\sigma) f(t, x_{\sigma(1)}, \overrightarrow{s_2}) f( \overleftarrow{s_2},x_{\sigma(2)}, x_{\sigma(3)} )\ d{\bf s}\\
    &= \frac{1}{3!} \int \sum_{\sigma \in S_3} \sgn(\sigma) f(\overleftarrow{s_2}, x_{\sigma(1)}, t) f( \overleftarrow{s_2},x_{\sigma(2)}, x_{\sigma(3)} )\ d{\bf s}.
\end{align*}
Let
$$
F_k = \int f(\overleftarrow{s_2}, x_{a_k (1)}, t) f( \overleftarrow{s_2},x_{a_k (2)}, x_{a_k (3)} )\ d{\bf s},
$$
where $a_k \in S_3$ are 
 \begin{align*}
a_0 &= \Big( \begin{array}{ccc}
1 & 2 & 3 \\
1 & 2 & 3 \\
\end{array}
\Big),\ a_1 = \Big( \begin{array}{ccc}
1 & 2 & 3 \\
3 & 1 & 2 \\
\end{array}
\Big),\ a_2 = \Big( \begin{array}{ccc}
1 & 2 & 3 \\
2 & 3 & 1 \\
\end{array}
\Big),\\
a_3 &= \Big( \begin{array}{ccc}
1 & 2 & 3 \\
2 & 1 & 3 \\
\end{array}
\Big),\ a_4 = \Big( \begin{array}{ccc}
1 & 2 & 3 \\
1 & 3 & 2 \\
\end{array}
\Big),\ a_5 = \Big( \begin{array}{ccc}
1 & 2 & 3 \\
3 & 2 & 1 \\
\end{array}
\Big)   .
\end{align*}
From the anti-symmetry of $f$, we have
$$
F_0 = -F_4,\ F_1 = -F_5,\ F_2 = -F_3.
$$
Thus,
\begin{align*}
    3! f(t, *) \hat{\wedge}_2 f &= F_0 + F_1 + F_2 - F_3 - F_4 - F_5\\
    &= 2( F_0 + F_1 + F_2 ).
\end{align*}
We compute $F_0, F_1$ and $ F_2$. For example, we write $g_{34}(1,0,2,3)$ for $f_3(x_1)f_4(t)f_3(x_2)f_4(x_3)$. By taking account of $\langle f_i, f_j \rangle_{L^2(\mathbb{R}_+)}=0$ for $i \neq j$, the result is as follows:
\begin{align*}
    &(4!)^2 F_0 \\
    &= \int \Big\{ f_1(s_1) f_2(s_2) f_3(x_1) f_4(t) -f_1(s_1) f_2(s_2) f_3(t) f_4(x_1) \\
    &\quad-f_1(s_2) f_2(s_1) f_3(x_1) f_4(t) +f_1(s_2) f_2(s_1) f_3(t) f_4(x_1)\\
    &\qquad + \cdots \Big\}\\
    &\times\Big\{ f_1(s_1) f_2(s_2) f_3(x_2) f_4(x_3) -f_1(s_1) f_2(s_2) f_3(x_3) f_4(x_2)\\
    &\quad-f_1(s_2) f_2(s_1) f_3(x_2) f_4(x_3) +f_1(s_2) f_2(s_1) f_3(x_3) f_4(x_2)\\
    &\qquad + \cdots \Big\}\ d{\bf s}\\
    &= 2 \| f_1 \|^2 \| f_2 \|^2 \Big\{ f_3(x_1)f_4(t)f_3(x_2)f_4(x_3) -f_3(x_1)f_4(t)f_3(x_3)f_4(x_2) \\&\quad -f_3(t)f_4(x_1)f_3(x_2)f_4(x_3) +f_3(t)f_4(x_1)f_3(x_3)f_4(x_2)  \Big\}\\
    &+ 2 \| f_1 \|^2 \| f_3 \|^2 \Big\{ f_2(x_1)f_4(t)f_2(x_2)f_4(x_3) -f_2(x_1)f_4(t)f_2(x_3)f_4(x_2) \\&\quad -f_2(t)f_4(x_1)f_2(x_2)f_4(x_3) +f_2(t)f_4(x_1)f_2(x_3)f_4(x_2)  \Big\}\\
    &+ 2 \| f_1 \|^2 \| f_4 \|^2 \Big\{ f_2(x_1)f_3(t)f_2(x_2)f_3(x_3) -f_2(x_1)f_3(t)f_2(x_3)f_3(x_2) \\&\quad -f_2(t)f_3(x_1)f_2(x_2)f_3(x_3) +f_2(t)f_3(x_1)f_2(x_3)f_3(x_2)  \Big\}\\
    &+ 2 \| f_2 \|^2 \| f_3 \|^2 \Big\{ f_1(x_1)f_4(t)f_1(x_2)f_4(x_3) -f_1(x_1)f_4(t)f_1(x_3)f_4(x_2) \\&\quad -f_1(t)f_4(x_1)f_1(x_2)f_4(x_3) +f_1(t)f_4(x_1)f_1(x_3)f_4(x_2)  \Big\}\\
    &+ 2 \| f_3 \|^2 \| f_4 \|^2 \Big\{ f_1(x_1)f_2(t)f_1(x_2)f_2(x_3) -f_1(x_1)f_2(t)f_1(x_3)f_2(x_2) \\&\quad -f_1(t)f_2(x_1)f_1(x_2)f_2(x_3) +f_1(t)f_2(x_1)f_1(x_3)f_2(x_2)  \Big\}\\
    &+ 2 \| f_2 \|^2 \| f_4 \|^2 \Big\{ f_1(x_1)f_3(t)f_1(x_2)f_3(x_3) -f_1(x_1)f_3(t)f_1(x_3)f_3(x_2) \\&\quad -f_1(t)f_3(x_1)f_1(x_2)f_3(x_3) +f_1(t)f_3(x_1)f_1(x_3)f_3(x_2)  \Big\}\\
    &= 2 \| f_1 \|^2 \| f_2 \|^2 \Big\{ g_{34}(1,0,2,3) -g_{34}(1,0,3,2) -g_{34}(0,1,2,3) +g_{34}(0,1,3,2) \Big\}\\
    &+ 2 \| f_1 \|^2 \| f_3 \|^2 \Big\{ g_{24}(1,0,2,3) -g_{24}(1,0,3,2) -g_{24}(0,1,2,3) +g_{24}(0,1,3,2) \Big\}\\
    &+ 2 \| f_1 \|^2 \| f_4 \|^2 \Big\{ g_{23}(1,0,2,3) -g_{23}(1,0,3,2) -g_{23}(0,1,2,3) +g_{23}(0,1,3,2) \Big\}\\
    &+ 2 \| f_2 \|^2 \| f_3 \|^2 \Big\{ g_{14}(1,0,2,3) -g_{14}(1,0,3,2) -g_{14}(0,1,2,3) +g_{14}(0,1,3,2) \Big\}\\
    &+ 2 \| f_3 \|^2 \| f_4 \|^2 \Big\{ g_{12}(1,0,2,3) -g_{12}(1,0,3,2) -g_{12}(0,1,2,3) +g_{12}(0,1,3,2) \Big\}\\
    &+ 2 \| f_2 \|^2 \| f_4 \|^2 \Big\{ g_{13}(1,0,2,3) -g_{13}(1,0,3,2) -g_{13}(0,1,2,3) +g_{13}(0,1,3,2) \Big\}.
\end{align*}
If we write $F_0(1,0,2,3)$ for $F_0$ for simplicity, then we also get
\begin{align*}
    &(4!)^2 F_1 \\
    &= 2 \| f_1 \|^2 \| f_2 \|^2 \Big\{ g_{34}(3,0,1,2) -g_{34}(3,0,2,1) -g_{34}(0,3,1,2) +g_{34}(0,3,2,1) \Big\}\\
    &+ \cdots\\
    &= (4!)^2 F_0(3,0,1,2),
\end{align*}
\begin{align*}
    &(4!)^2 F_2 \\
    &= 2 \| f_1 \|^2 \| f_2 \|^2 \Big\{ g_{34}(2,0,3,1) -g_{34}(2,0,1,3) -g_{34}(0,2,3,1) +g_{34}(0,2,1,3) \Big\}\\
    &+ \cdots\\
    &= (4!)^2 F_0(2,0,3,1).
\end{align*}
Using the symmetry between the first and third variables of $g_{ij}$ and that between the second and fourth variables of $g_{ij}$, where $i,j \in \{ 1,2,3,4 \}$, we obtain $F_0 + F_1 + F_2 = 0.$
\end{proof}
\begin{rem}
    Let $z\in L^2(\mathbb{R}_+)$. We can prove $K(\Psi(z))=0$.
\end{rem}

% 5
\section{Concluding Remarks}\label{section_comment}
In this paper, we dealt with Malliavin calculus on the Clifford algebra and investigated the differences between this calculus and usual Malliavin calculus or Brownian motion. In this section, We would like to discuss a few possible developments in the future.

The theory in the framework of Brownian motion for bosons includes Hudson-Parthasarathy theory \cite{Par92,Mey95}, which corresponds to It\^o calculus. This theory further developed into non-causal stochastic calculus \cite{Lin93,Mey95} and into more generalized forms \cite{FLS01}. Given that there also exists Hudson-Parthasarathy theory for fermions \cite{AH84}, it seems natural to consider extending current Malliavin calculus on the Clifford algebra in the direction of \cite{Lin93}. The key point in \cite{Lin93} was that there is an isomorphism between the symmetric Fock space and the symmetric measure space \cite{Gui}. According to {\cite[Ex.~19.14]{Par92}}, the anti-symmetric Fock space also has an isomorphism to the symmetric measure space. This implies that analyses of both bosons and fermions can be developed in the symmetric measure space. By comparing the relationships between their derivations and divergences on the symmetric measure space, we might obtain some new results. It is likely that these connections could be linked through the Jordan-Wigner transformation {\cite[Example~25.18]{Par92}}.

However, it is unclear whether the current results can be extended in a more generalized context as in \cite{FLS01}. For instance, the anti-symmetric derivation considered here does not satisfy the Leibniz rule, which means that it is not possible to generalize the derivation by using the complete positivity of shift operations as done in \cite{FLS01}. If a map is completely positive, its generator should satisfy the Leibniz rule. Notably, for bosons, the results of \cite{FLS01} might be further generalized. For example, it is known that under quasi-free CCR conditions, the operation $W(\varphi)\mapsto W(\varphi)\exp{i\langle \varphi, \psi \rangle}$ is completely positive \cite{DVV79}. In the case of quasi-free CAR, such an operation does not satisfy the condition of complete positivity \cite{Eva79}. Furthermore, it is worth noting that It\^o calculus exists for both quasi-free CCR and CAR \cite{BSW5}.

If we consider extending Malliavin calculus not only to fermions but also to $q$-Brownian motions on $L^2(\mathbb{R}_+)$, it might be possible to utilize the chaos expansion as done in this paper \cite{Spe03}. However, it is uncertain what the multiplication formula would be, so it may not work out smoothly.

Regarding applications, this paper considered three examples to which usual Malliavin calculus is applied: the concentration inequality, the logarithmic Sobolev inequality, and the fourth-moment theorem. We investigated how the proofs and conclusions would change in the fermion setting. Through the investigation, the author faced the inconvenience of the Leibniz rule not holding many times. While 
the concentration inequality that does not depend on the Leibniz rule was manageable, dealing with anti-symmetric derivative of $\log$ was beyond his capability. The fourth-moment theorem, depending on getting the anti-symmetric derivative of $x^3$, required very lengthy calculations. Nevertheless, the obtained result is interesting.

Currently, the author is exploring whether L\'{e}vy's stochastic area can be represented on the anti-symmetric Fock space. It is expected that L\'{e}vy's stochastic area is represented by using the anti-symmetric divergence and we 
can utilize CAR for the moment calculations.

\section*{Acknowledgment}
I would like to express my heartfelt gratitude to Professor Yuu Hariya who is my supervisor for his continued support, invaluable advice, and unwavering encouragement. This paper would not have been completed without his guidance and persistent assistance.

\bibliographystyle{alpha}
\bibliography{ref}

\end{document}